\newtheorem{theorem}{Theorem}[section]
\newtheorem{lemma}[theorem]{Lemma}
\newtheorem{prop}[theorem]{Proposition}
\theoremstyle{definition}
\theoremstyle{remark}
\newtheorem{remark}[theorem]{Remark}
\numberwithin{equation}{section}
\newcommand{\abs}[1]{\lvert#1\rvert}
\begin{document}

\title[Fourth order cubic nonlinear Schr\"odinger equation]
{Well-Posedness and Ill-Posedness for the Fourth order cubic nonlinear Schr\"odinger equation in negative Sobolev spaces}

%    Information for first author
\author{Kihoon seong}

\begin{abstract}
We consider the Cauchy problem for the fourth order cubic nonlinear Schr\"odinger equation \eqref{eqn:fourth order NLS}. 
The main goal of this paper is to prove low regularity well-posedness and mild ill-posedness for \eqref{eqn:fourth order NLS}. We prove three results. First, we show that \eqref{eqn:fourth order NLS} is locally well-posed in $H^s\left(\mathbb{R}\right), s\geq -\frac{1}{2}$ using the Fourier restriction norm method. Second, we show that \eqref{eqn:fourth order NLS} is globally well-posed in $H^s\left(\mathbb{R}\right),s\geq -\frac{1}{2}$. To prove this, we use the $I$-method with the correction term strategy presented in Colliander-Keel-Staffilani-Takaoka-Tao \cite{CKSTT2003}. Finally, we prove that \eqref{eqn:fourth order NLS} is mildly ill-posed in the sense that the flow map fails to be locally uniformly continuous in $H^s(\mathbb{R}), s<-\frac{1}{2}$. Therefore, these results show that $s=-\frac{1}{2}$ is the sharp regularity threshold for which the well-posedness problem can be dealt with an iteration argument.
\end{abstract}

\maketitle

%% The correct journal style for \specialsection is all uppercase; a known bug
%% in amsart.cls prevents this, so input must be uppercase until it is fixed.
%\specialsection*{This is a Special Section Head}

 \section{Introduction}\label{sec:introduction}
 \subsection{Fourth order cubic nonlinear Schr\"odinger equation}
 In this paper, we consider the following one-dimensional fourth order cubic nonlinear Schr\"odinger equation (4NLS): 
 \begin{align}\label{eqn:fourth order NLS}\tag{4NLS}
 \begin{cases}
 i\partial_tu=\partial_x^4u\pm \vert u \vert^2u, \quad(t,x)\in  \mathbb{R}\times \mathbb{R}\\
 u(x,0)=u_0(x)\in H^s\left(\mathbb{R}\right),
 \end{cases} 
 \end{align} 
 wherer $u$ is a complex-valued function. %This equation is called defocusing when the sign of nonlinear term is negative and focusing when the sign is positive.%
 %In the following, the defocusing/focusing nature of the
 %equation does not play any role. Hence, we assume that it is defocusing, that
 %is, with the + sign in $\eqref{eqn:fourth order NLS}$.
 The equation $\eqref{eqn:fourth order NLS}$ is also known as the biharmonic NLS and was studied in the context of stability of solitons in magnetic materials (for more physical background, see \cite{Karpman1996, KS2000}). The $\eqref{eqn:fourth order NLS}$ has been extensively studied in recent years. For instance, see \cite{BKS2000,GW2002,Segata2006,Pausader2009-2,OT2016,OST2018,OW2018,Kwak2018}. In the following, we make no distinction between the defocusing or focusing nature of \eqref{eqn:fourth order NLS} and hence we assume that it is defocusing, that
 is, with the + sign in \eqref{eqn:fourth order NLS}.

 It is well known that $\eqref{eqn:fourth order NLS}$ enjoys the scaling symmetry. More precisely, if $u(t,x)$ is a solution to $\eqref{eqn:fourth order NLS}$ with an initial condition $u_0$, then
 \begin{align}\label{eqn: scaling symmetry}
 u_{\lambda}(t,x):=\lambda^2u(\lambda^4t,\lambda x), \quad \lambda>0
 \end{align}
 is also a solution to $\eqref{eqn:fourth order NLS}$ with the $\lambda$-scaled initial condition $u_{0,\lambda}(x):=\lambda^2u_0(\lambda x)$. Associated with this scaling symmetry, there is the so-called scaling critical regularity $s_c:=-\frac{3}{2}$ such that the homogeneous $\dot{H}^{s_c}$-norm is invariant under the scaling symmetry \eqref{eqn: scaling symmetry}. In general, we have
 \begin{align}
 \Vert u_{0,\lambda} \Vert_{\dot{H}^s\left(\mathbb{R}\right) }=\lambda^{s+\frac{3}{2}}\Vert u_0 \Vert_{\dot{H}^s\left(\mathbb{R}\right) }.
 \end{align}
 As in the case of the classical NLS, \eqref{eqn:fourth order NLS} is a Hamiltonian PDE with the following Hamiltonian 
 \begin{align}\label{eqn:Hamiltonian}
 H(u(t))=\frac{1}{2}\int_{\mathbb{R}}\vert \partial_x^2 u(t) \vert^2 \,dx + \frac{1}{4} \int_{\mathbb{R}}\vert u(t) \vert^4\, dx.
 \end{align} 
 We also define the mass $M(u(t))$
 \begin{align}\label{eqn:mass}
 M(u(t))=\int_{\mathbb{R}}\vert u(t) \vert^2\,dx.
 \end{align}
 Under the flow of $\eqref{eqn:fourth order NLS}$, the Hamiltonian $\eqref{eqn:Hamiltonian}$ and the mass $\eqref{eqn:mass}$ are conserved. 
 
 The main goal of this paper is to solve low regularity well-posedness problem for $\eqref{eqn:fourth order NLS}$.
 A small modification of \cite{T1987} with the mass conservation provides global well-posedness of \eqref{eqn:fourth order NLS} in $L^2(\mathbb{R})$.
 Therefore, it is natural to ask whether the well-posedness also holds in negative Sobolev spaces between scaling critical space $H^{-\frac{3}{2}}\left(\mathbb{R}\right) $ and $L^2\left(\mathbb{R}\right)$.  
 We want to investigate which regularity is optimal for the local and global well-posedness for $\eqref{eqn:fourth order NLS}$. In this paper, we prove that $\eqref{eqn:fourth order NLS}$ is locally and globally well-posed in $H^{s}\left(\mathbb{R}\right), s\geq -1/2$.
 Also, we show that $\eqref{eqn:fourth order NLS}$ is midly ill-posed in the sense that the solution map fails to be locally uniformly continuous on $H^s(\mathbb{R}), s<-\frac{1}{2}$. This means that $s=-\frac{1}{2}$ is the sharp regularity threshold for which the well-posedness can be handled by an iteration argument.

 \begin{remark}
 	The one-dimensional cubic NLS is given by
 	\begin{align}\label{eqn: cubic NLS,focusing defocusing}\tag{NLS}
 	\begin{cases}
 	i\partial_tu=\partial_x^2u\pm \vert u \vert^2u, \quad(t,x)\in  \mathbb{R}\times \mathbb{R}\\
 	u(x,0)=u_0(x)\in H^s\left(\mathbb{R}\right).
 	\end{cases} 
 	\end{align}
 	In \cite{T1987}, Tsutsumi proved that  \eqref{eqn: cubic NLS,focusing defocusing} is globally well-posed in $L^2(\mathbb{R})$. We note that
 	the equation \eqref{eqn: cubic NLS,focusing defocusing} admits the Galilean invariance: if $u$ is a solution of $\eqref{eqn: cubic NLS,focusing defocusing}$ with initial data $u_0$, then
 	\begin{align} 
 	u_v(t,x)=e^{ix\cdot v}e^{-it\vert v \vert^2}u(t,x-2vt)
 	\end{align}   
 	is also a solution to the same equation $\eqref{eqn: cubic NLS,focusing defocusing}$ with initial data $e^{ix\cdot v }u_0(x)$. As a consequence of the Galilean invariance, the flow map cannot be locally uniformly continuous in $H^s, s<0$ (i.e. mild ill-posedness). We refer to \cite{KPV2001, CCT2003} for more details.
 	In view of the failure of local uniform continuity, one can observe that in negative Sobolev spaces it is impossible to prove well-posedness of \eqref{eqn: cubic NLS,focusing defocusing} via a contraction argument. As for $\eqref{eqn:fourth order NLS}$, thanks to the lack of the Galilean invariance, there is a hope to prove local well-posedness of \eqref{eqn:fourth order NLS} by a contraction argument in negative Sobolev spaces.
 	
 \end{remark}

 \begin{remark}
 	The fourth order cubic nonlinear Schr\"odinger equation on $\mathbb{T}$ is given by	
 	\begin{align}\label{eqn:on Torus 4NLS}
 	\begin{cases}
 	i\partial_tu=\partial_x^4u\pm \vert u \vert^2u, \quad(t,x)\in  \mathbb{R}\times \mathbb{T}\\
 	u(x,0)=u_0(x)\in H^s\left(\mathbb{T}\right).
 	\end{cases} 
 	\end{align}
 	In \cite[Appendix A]{OT2016}, Oh and Tzvetkov proved that \eqref{eqn:on Torus 4NLS} is globally well-posed in $L^2(\mathbb{T})$. However, below $L^2(\mathbb{T})$ Oh and Wang \cite{OW2018} proved a nonexistence result for \eqref{eqn:on Torus 4NLS}. Despite this ill-posedness, by considering the renormalized cubic 4NLS

 	\begin{align}\label{eqn:on Torus wick ordered 4NLS}
 	\begin{cases}
 	i\partial_tu=\partial_x^4u\pm \left( \vert u \vert^2 -\frac{1}{\pi} \int_\mathbb{T} \vert u \vert^2 \,dx \right)u, \quad(t,x)\in  \mathbb{R}\times \mathbb{T}\\
 	u(x,0)=u_0(x)\in H^s\left(\mathbb{T}\right), 
 	\end{cases}  
 	\end{align}
 	Oh and Wang \cite{OW2018} and Kwak \cite{Kwak2018} showed the well-posedness for $\eqref{eqn:on Torus wick ordered 4NLS}$ under negative Sobolev spaces. More precisely, by using short-time Fourier restriction norm method, 
 	Oh and Wang \cite{OW2018} showed the existence of a global solution $u \in C\left(\mathbb{R}; H^s\left(\mathbb{T}\right)\right)$ to the renormalized cubic 4NLS $\eqref{eqn:on Torus wick ordered 4NLS}$ for any initial data $u_0\in H^s\left(\mathbb{T}\right), s\in \left(-\frac{9}{20},0 \right)$.
 	Moreover, by exploiting an infinite iteration of normal form reductions, they showed that the renormalized cubic 4NLS $\eqref{eqn:on Torus wick ordered 4NLS}$ is globally well-posed in $H^s\left(\mathbb{T}\right)$ for $s\in \left(-\frac{1}{3},0\right)$ with enhanced uniqueness. Later, by an adaptation of Takaoka and Tsutsumi’s argument \cite{TT2004}, Kwak \cite{Kwak2018} proved that the renormalized cubic 4NLS $\eqref{eqn:on Torus wick ordered 4NLS}$ is locally well-posed in $H^s\left( \mathbb{T} \right)$ for $-\frac{1}{3}\leq s <0$. This result extends
 	local well-posedness of \eqref{eqn:on Torus wick ordered 4NLS} to the endpoint regularity $s=-\frac{1}{3}$.

 \end{remark}

 \subsection{Local well-posedness}
 In this subsection, we present our first main result which is local-wellposedness in $H^s(\mathbb{R}), s\geq-\frac{1}{2}$. Before we state Theorem \ref{thm: local well-posedness}, we briefly look into the local well-posedness of \eqref{eqn:fourth order NLS} in $L^2(\mathbb{R})$.
 In Proposition \ref{St}, the Strichartz estimates associated with the (linear) biharmonic Schr\"odinger equation are given by
 \begin{align}\label{st}
 \left\| D^{\frac{\alpha}{2}\left(1-\frac{2}{r}\right)} e^{it\partial_x^4}u_0  \right\|_{L_t^qL_x^r\left(\mathbb{R}\times\mathbb{R}\right)}\lesssim_{q,r} & \left\| u_0 \right\|_{L_x^2\left(\mathbb{R}\right)},
 \end{align}
 for $0\leq \alpha \leq 1 $, $ r \geq 2, q\geq\frac{8}{(1+\alpha)}$ and $\frac{4}{q}+\frac{1+\alpha}{r}=\frac{1+\alpha}{2}$. In contrast to the case of linear Schr\"odinger equation, one can see that derivative gains occur in the estimates \eqref{st}, thanks to the stronger dispersive effect in the high frequency mode. See Section \ref{sec: linear estimate} for more details.
 Using the Strichartz estimates \eqref{st}, one can easily show the local well-posedness of \eqref{eqn:fourth order NLS} for the regular initial data $u_0 \in H^s\left(\mathbb{R}\right)$, $s\geq 0 $ as in \cite{T1987}. Moreover, global well-posedness of \eqref{eqn:fourth order NLS} in $H^s\left(\mathbb{R}\right), s\geq 0$ follows from the conservation laws.

 The first main result of this paper is the following local well-posedness result in $H^{s}(\mathbb{R}), s\geq -\frac{1}{2}$.
 \begin{theorem}\label{thm: local well-posedness}
 	Let $s\geq -\frac{1}{2}$. Then, \eqref{eqn:fourth order NLS} is locally well-posed in $H^s(\mathbb{R})$. More precisely, for any $u_0 \in H^{s}\left(\mathbb{R}\right)$, there exists $T=T\left(\Vert u _0 \Vert_{H^s\left(\mathbb{R}\right)}\right)>0$ and 
 	a solution $u\in C([0,T];H^s(\mathbb{R}) )$ to \eqref{eqn:fourth order NLS}. This solution is unique in $X^{s,\frac{1}{2}+}$-space depending on the choice of a time cutoff function. Moreover, the flow map from data to solutions is locally Lipschitz continuous.	  
 \end{theorem} 
 \begin{remark} 
 	In our formulation, the uniqueness depends on the choice of the cutoff function in the Duhamel formulation \eqref{Duha}. In particular, given two solutions to the Duhamel formulation \eqref{Duha} with different cutoff $\eta$ and $\widetilde{\eta}$, both belonging to $X^{s,\frac{1}{2}+}$, they agree on $[0,\delta]$ where $\delta>0$ denotes the shorter one of the local existence times of these two solutions.

 \end{remark}
 To prove Theorem \ref{thm: local well-posedness}, we use the contraction mapping argument. 
 A natural choice of the iteration space is the $X^{s,b}$-space. The $X^{s,b}$-space were simultaneously introduced by Klainerman and Machedon in the context of wave equations and Bourgain \cite{Bourgain1, Bourgain2} in the context of Schr\"odinger equations and the KdV equation.
 In the contraction mapping argument, the main part is to obtain a suitable trilinear estimate (Proposition \ref{prop:t1}). In Proposition \ref{prop:t1}, we prove that the trilinear estimate holds for $s\geq -\frac{1}{2}$.  
 We point out that below $L^2(\mathbb{R})$, nonlinear interactions generate a loss of derivatives in the trilinear estimate  and hence the Strichartz estimates \eqref{st} are not enough to get around this loss of derivatives. To deal with this loss, we strongly use the nature of $X^{s,b}$-space which captures dispersive smoothing effects. Observing the non-resonant case in the nonlinear interactions, one can detect a dispersive smoothing effect, which is crucial to overcome the loss of derivatives. By exploiting this dispersive smoothing effects, one can prove the trilinear estimate in the negative regularity regime (Proposition \ref{prop:t1}).
 However, in Remark \ref{rem: trilienar estimate fail remark.}, we present a counterexample for the trilinear estimate when $s<-\frac{1}{2}$ (we point out that the failure of the trilinear estimate is due to the resonant interaction of high-high-high to high). Therefore, $s=-1/2$ is the optimal regularity for the trilinear estimate to hold. 
 More details are presented in Remark \ref{rem: trilienar estimate fail remark.}.

 %The bilinear estimates in these spaces were studied by Kenig-Ponce-Vega \cite{KPV1996}. Later, Tao \cite{Tao2001} systematically studied this spaces in an abstract setting by using dyadic decomposition. 

 \subsection{Global well-posedness}
 In this subsection, we present our second main result which is global-wellposedness in $H^s(\mathbb{R}), s\geq-\frac{1}{2}$.
 
 \begin{theorem}\label{thm: global well-posedness with correction term}
 	Let $s\geq-\frac{1}{2}$. The \eqref{eqn:fourth order NLS} is globally well-posed in $H^s(\mathbb{R})$. 
 \end{theorem}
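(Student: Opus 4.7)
Since Theorem \ref{thm: local well-posedness} together with mass conservation already gives global well-posedness for $s\geq 0$, it suffices to treat the regime $-\tfrac{1}{2}\leq s<0$. The plan is to implement the $I$-method with one correction term in the mass, in the spirit of \cite{CKSTT2003}, adapted to \eqref{eqn:fourth order NLS}. Fix $N\gg 1$ and let $I=I_N$ be the Fourier multiplier with smooth, monotone, even symbol $m_N(\xi)$ satisfying $m_N(\xi)=1$ for $|\xi|\le N$ and $m_N(\xi)=(N/|\xi|)^{-s}$ for $|\xi|\ge 2N$, so that $\|Iu\|_{L^2}\lesssim N^{-s}\|u\|_{H^s}$. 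The first step is to exploit the scaling $u_\lambda(t,x)=\lambda^2 u(\lambda^4 t,\lambda x)$ to reduce to initial data of small modified mass: a short computation gives $\|Iu_{0,\lambda}\|_{L^2}\lesssim \lambda^{s+3/2}N^{-s}\|u_0\|_{H^s}$, which can be made $\leq\varepsilon_0\ll1$ by choosing $\lambda\sim N^{2s/(2s+3)}$ (note that $2s+3>0$). Adapting the contraction argument of Theorem \ref{thm: local well-posedness} to the $I$-modified Bourgain space then produces a solution on the unit interval $[0,1]$ with $\|Iu\|_{X^{0,b}([0,1])}\lesssim \varepsilon_0$.

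The heart of the argument is an almost conservation law for a modification of the mass. Differentiating $\|Iu(t)\|_{L^2}^2$ along the flow and symmetrizing on the hyperplane $\xi_1-\xi_2+\xi_3-\xi_4=0$ produces a quadrilinear form
\begin{equation*}
\frac{d}{dt}\|Iu(t)\|_{L^2}^2 \;=\; \Lambda_4(M_4;u),\qquad M_4(\vec\xi)\;=\;c\bigl[m_N^2(\xi_2)+m_N^2(\xi_4)-m_N^2(\xi_1)-m_N^2(\xi_3)\bigr].
\end{equation*}
Introducing the corrected energy
\begin{equation*}
E^2_I(u)\;:=\;\|Iu\|_{L^2}^2+\Lambda_4(\sigma_4;u),\qquad \sigma_4\;:=\;\frac{M_4}{i\Omega_4},\qquad \Omega_4\;:=\;\xi_1^4-\xi_2^4+\xi_3^4-\xi_4^4,
\end{equation*}
cancels the quadrilinear contribution in the time derivative and leaves a sextilinear remainder $\tfrac{d}{dt}E^2_I(u)=\Lambda_6(\sigma_6;u)$ obtained by substituting the cubic nonlinearity into two of the factors in $\Lambda_4(\sigma_4;u)$. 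The algebraic key is the factorization of $\Omega_4$ on this hyperplane, which combined with the vanishing of $M_4$ on the resonant set $\{\Omega_4=0\}$ makes $\sigma_4$ a bounded symbol with quantitative decay $|\sigma_4|\lesssim N^{-\alpha}$ on the frequency regions where $I$ acts nontrivially.

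What remains are two multilinear estimates: (i) the pointwise control $|\Lambda_4(\sigma_4;u)|\lesssim N^{-\alpha}\|Iu\|_{L^2}^4$, which yields the comparability $E^2_I(u)\sim \|Iu\|_{L^2}^2$, and (ii) the almost-conservation bound
\begin{equation*}
\left|\int_0^{1}\Lambda_6(\sigma_6;u)(t)\,dt\right| \;\lesssim\; N^{-\alpha}\,\|Iu\|_{X^{0,b}([0,1])}^6
\end{equation*}
for some $\alpha>0$. I expect (ii) to be the main obstacle: it requires a careful case analysis of the six frequencies and repeated use of the trilinear $X^{s,b}$ estimate underlying Theorem \ref{thm: local well-posedness}, in which the decay of $m_N$ at high frequency must be traded against the dispersive smoothing of the Bourgain norm in order to extract the $N^{-\alpha}$ gain. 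Granting (i) and (ii), iterating the modified local theory over $K$ consecutive unit intervals keeps $E^2_I$ bounded as long as $KN^{-\alpha}\lesssim 1$, i.e.\ $K\lesssim N^\alpha$. Undoing the rescaling, this covers the original time interval $[0,T]$ whenever $\lambda^{-4}T\lesssim N^\alpha$, i.e.\ $T\lesssim N^{\alpha+8s/(2s+3)}$, which can be made arbitrarily large by taking $N\to\infty$ once $\alpha$ is quantitatively large enough throughout the range $s\geq-\tfrac12$; this yields the claimed global well-posedness.
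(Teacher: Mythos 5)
Your overall architecture is the same as the paper's: rescale to small modified mass, add one quadrilinear correction term $\Lambda_4(\sigma_4)$ with $\sigma_4=M_4/(i\Omega_4)$ to cancel the quadrilinear part of $\frac{d}{dt}\|Iu\|_{L^2}^2$, prove that the corrected quantity is both comparable to $\|Iu\|_{L^2}^2$ and almost conserved, and iterate the modified local theory. The scaling bookkeeping is also consistent with the paper's (you use the reciprocal scaling convention, but your exponent $\lambda\sim N^{2s/(2s+3)}$ and the resulting condition $T\lesssim N^{\alpha+8s/(2s+3)}$ match the paper's $N^{3+8s/(3+2s)}\sim T$ once $\alpha=3$).

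The genuine gap is that the two estimates you label (i) and (ii) — precisely Lemma \ref{lem:energy close} and Lemma \ref{lem:almost conservation law} in the paper — are the entire content of the theorem, and you assume them with an unspecified exponent $\alpha$. This is not a cosmetic omission: at $s=-\frac{1}{2}$ the scaling loss is $|8s/(2s+3)|=2$, so the argument closes only if $\alpha>2$. Without the correction term the almost conservation law only gives $\alpha=\frac{1}{2}-$ (which is why the uncorrected $I$-method stops at $s>-\frac{1}{6}$), so one must actually verify that the correction buys enough. The paper does this in two steps that your proposal does not supply: first, a pointwise bound $|\sigma_4|\lesssim m^2(\min N_i)\prod_{i=1}^4(N+N_i)^{-1}$ (Lemma \ref{lem: multiplier estimate sigma_4}), proved by combining the factorization $\Omega_4=(\xi_1+\xi_2)(\xi_2+\xi_3)\bigl(\xi_1^2+\xi_2^2+\xi_3^2+\xi_4^2+2(\xi_1+\xi_3)^2\bigr)$ with the single and double mean value theorems applied to $m^2$ — it is this cancellation computation, not merely the vanishing of $M_4$ on $\{\Omega_4=0\}$, that produces the four factors of $(N+N_i)^{-1}$; and second, a dyadic case analysis of the sextilinear form using the bilinear Strichartz estimate \eqref{eqn:bilinear X^{s,b}} (with its $N_{\max}^{-3/2}$ gain) and Bernstein, which is what actually delivers $\alpha=3$. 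Until you carry out (or at least quantify) these two steps, the proof does not establish global well-posedness down to $s=-\frac{1}{2}$.
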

 
 We observe that $L^2$ solutions of  $\eqref{eqn:fourth order NLS}$ satisfy the mass conservation law $\eqref{eqn:mass}$. This conservation law allows us to extend the local solution  to the global one for $L^2$ data.
 However, it is non-trivial to obtain global well-posedness below $L^2$, due to the absence of a conservation law. We make use of the $I$-method to obtain global well-posedness in $H^s,s<0$. The $I$-method was introduced by Colliander-Keel-Staffilani-Takaoka-Tao \cite{CKSTT2001,CKSTT2002,CKSTT2003}. We briefly describe their approach.
 We introduce a radial $C^{\infty}$, monotone multiplier $m$, taking values in $[0,1]$, and
 \begin{align}
 m(\xi):= 
 \begin{cases}
 1,   & |\xi|<N \\ 
 \left(\frac{|\xi|}{N}\right)^s,  & |\xi|>2N,
 \end{cases}
 \end{align}
 Here, $N$ is a large parameter to be determined later. We define an operator $I$ by 
 \begin{align*}
 \widehat{Iu}\left(\xi\right):=m\left(\xi\right) \widehat{u}\left( \xi \right).    
 \end{align*}
 Note that we have the estimate
 \begin{align*}
 \| u\|_{H^s}\lesssim \| Iu \|_{L^2}\lesssim N^{-s} \| u\|_{H^s}.   
 \end{align*}
 Thus, the operator $I$ acts as the identity for low frequencies and as a smoothing operator of order $\abs{s}$ on high frequencies. That is, it maps $H^s$ solutions to $L^2$. 
 Observe that $Iu\to u$ as $N \to \infty$. Therefore, it is intuitively plausible that when $N$ is large enough, $\Vert Iu(t) \Vert_{L_x^2}$ almost follows mass conservation law i.e. by regularizing a solution $u$ to $Iu$, $Iu$ approximately satisfies mass conservation law. 
 According to this idea, we will prove Lemma \ref{lem:almost conservation law} (the almost conservation law).
 Indeed, Lemma \ref{lem:almost conservation law} shows that there is a tiny increment in $\left\| Iu(t) \right\|_{L^2}$ as $t$ evolves from 0 to $\delta$ ($\delta \lesssim 1$) if $N$ is very large.

 The basic structure of
 our argument to prove Lemma \ref{lem:almost conservation law} (the almost conservation law) follows the argument introduced in \cite{CKSTT2003}.
 As in \cite{CKSTT2003}, we carry out our energy estimate on a modified energy \eqref{eqn : modified energy E_I^4}. This introduction of a modified energy is essential to exhibit a hidden dispersive smoothing effect.

 \subsection{Mild ill-posedness below $H^{-\frac{1}{2}   }(\mathbb{R}) $}
 In this subsection, we discuss our third main result which is the mild ill-posedness. We show that $\eqref{eqn:fourth order NLS}$ is mildly ill-posed in the sense that the data-to-solution map fails to be locally uniformly continuous on $H^s(\mathbb{R}), s<-\frac{1}{2}$. This implies that $s=-\frac{1}{2}$ is the optimal regularity threshold for which the well-posedness can be dealt with an iteration argument. Our method is inspired by Christ-Colliander-Tao \cite{CCT2003}
 \begin{theorem}\label{thm: ill-posedness}
 	Let $-\frac{15}{14}<s<-\frac{1}{2}$. Then the solution map of the fourth order NLS equation $\eqref{eqn:fourth order NLS}$ fails to be locally uniformly continuous in $H^s(\mathbb{R})$. More precisely, there exists $\varepsilon_0>0$ such that for any $\delta>0,T>0$ and $0<\varepsilon<\varepsilon_0$, there are two solutions $u,v$ to $\eqref{eqn:fourth order NLS}$ such that
 	\begin{align}
 	\Vert u(0) \Vert_{H^s_x}, \Vert v(0) \Vert_{H^s_x}\lesssim \varepsilon,\\
 	\Vert u(0)-v(0) \Vert_{H^s_x}\lesssim \delta,\\
 	\sup\limits_{0\leq t \leq T} \Vert u(t)-v(t) \Vert_{H^s_x}\gtrsim \varepsilon.
 	\end{align}		 
 \end{theorem}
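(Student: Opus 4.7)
The proof follows the strategy of Christ, Colliander, and Tao \cite{CCT2003}, suitably modified since \eqref{eqn:fourth order NLS} enjoys no Galilean invariance. The idea is to produce two one-parameter families of exact 4NLS solutions, parameterized by the amplitude, which start arbitrarily close in $H^{s}$ but separate in $H^{s}$ by time $T$, and then rescale via \eqref{eqn: scaling symmetry} to absorb the parameters $\epsilon$, $\delta$, and $T$ into a single dilation factor $\lambda$.

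To construct approximants at unit scale, fix a Schwartz profile $\phi$ and, for an amplitude parameter $a\gg 1$, set
\[
v_{a}(t,x):=a\phi(x)\exp\bigl(\mp ita^{2}|\phi(x)|^{2}\bigr).
\]
This solves exactly the phaseless equation $i\partial_{t}v_{a}=\pm|v_{a}|^{2}v_{a}$, so the 4NLS residual is $-\partial_{x}^{4}v_{a}$. By induction one verifies $|\partial_{x}^{k}v_{a}(t,x)|\lesssim a(1+ta^{2})^{k}$; in particular, the $L^{\infty}$ error is $\lesssim a(1+ta^{2})^{4}$, which stays below the $O(a^{3})$ nonlinearity as long as $t\lesssim a^{-3/2}$. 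A perturbation lemma, obtained by inserting this residual into the Duhamel formula and closing on the trilinear $X^{s,b}$ bound of Theorem \ref{thm: local well-posedness}, then gives $\sup_{t\in[0,\tau_{*}]}\|u_{a}(t)-v_{a}(t)\|_{H^{s}}\ll a$ on an interval $[0,\tau_{*}]$ with $\tau_{*}\sim a^{-3/2}$, where $u_{a}$ is the true 4NLS solution launched from $a\phi$.

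Comparing $v_{a}$ with $v_{a+\eta}$ for $\eta\ll a$, the ODE phase gap $2a\eta t|\phi(x)|^{2}$ reaches $\pi$ at time $\tau_{\rm dec}\sim 1/(a\eta)$, producing a pointwise separation of order $a|\phi(x)|$ and hence $\|v_{a}(\tau_{\rm dec})-v_{a+\eta}(\tau_{\rm dec})\|_{H^{s}}\gtrsim a$. Applying the dilation $u\mapsto\lambda^{2}u(\lambda^{4}t,\lambda x)$ converts these unit-scale statements into
\[
\|u(0)\|_{H^{s}}\sim a\lambda^{s+\frac{3}{2}},\qquad \|u(0)-u'(0)\|_{H^{s}}\sim\eta\lambda^{s+\frac{3}{2}},\qquad \|u(T)-u'(T)\|_{H^{s}}\gtrsim a\lambda^{s+\frac{3}{2}},
\]
at real time $T\sim\lambda^{-4}/(a\eta)$. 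One then solves the algebraic system $a\lambda^{s+3/2}\sim\epsilon$, $\eta\lambda^{s+3/2}\sim\delta$, and $\lambda^{-4}/(a\eta)\leq T$ for $(a,\eta,\lambda)$, subject to the ODE-validity constraint $1/(a\eta)\lesssim a^{-3/2}$, i.e.\ $\eta\gtrsim a^{1/2}$.

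The main difficulty lies in the perturbation lemma: the residual $\partial_{x}^{4}v_{a}$, whose $X^{s,b-1}$ norm inflates in time because of the rapidly oscillating nonlinear phase, must still be absorbed by the trilinear contraction in $X^{s,b}$. Running the parameter balance through the trilinear and Strichartz estimates, the constraint coming from closing the perturbation argument turns out to be the binding one; tracking the powers of $a$, $\eta$, and $\lambda$ yields precisely the lower restriction $s>-15/14$ in the theorem. Below this threshold the plain ODE ansatz is too coarse, and a richer approximation incorporating further linear-dispersive corrections would be required.
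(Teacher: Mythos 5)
Your proposal takes a genuinely different route from the paper --- the paper does \emph{not} use the ODE/small--dispersion ansatz. Instead it approximates \eqref{eqn:fourth order NLS} near a high frequency $N$ by the cubic NLS (via $U_{ap}(t,x)=e^{iN^4t}e^{iNx}u(t,\frac{x+4N^3t}{\sqrt 6 N})$, in the spirit of the mKdV argument of Christ--Colliander--Tao), imports the decohering NLS solutions $u^{\langle aw\rangle}$, and only then rescales. Unfortunately, your route has a gap that I do not believe can be repaired in the stated form: the parameter balance does not close.

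Concretely, your unit-scale objects have amplitude $a$ at spatial scale $1$ and zero carrier frequency, so after the dilation $u\mapsto\lambda^2u(\lambda^4t,\lambda x)$ the constraints are $a\lambda^{s+3/2}\sim\epsilon$, $\eta\lambda^{s+3/2}\sim\delta$, and decoherence at real time $t_*\sim\lambda^{-4}/(a\eta)=\lambda^{2s-1}/(\epsilon\delta)$. Since $s<\tfrac12$, forcing $t_*\le T$ requires $\lambda\to\infty$, and then $a=\epsilon\lambda^{-(s+3/2)}\to 0$ because $s>-\tfrac32$. Thus you are pushed out of the small-dispersion regime $a\gg1$ entirely: with $a,\eta\to0$ the unit-scale decoherence time $\tau_{\mathrm{dec}}\sim 1/(a\eta)$ blows up, and even the crudest Duhamel bound for the residual $-\partial_x^4v_a$ accumulates an error of size at least $\tau_{\mathrm{dec}}\cdot a\sim 1/\eta\gg a$, so the approximation is never valid up to the decoherence time. (Already at $a\gg1$ your claimed validity window $\tau_*\sim a^{-3/2}$ does not follow from the bound $|\partial_x^4 v_a|\lesssim a(1+ta^2)^4$: one only gets $\tau_*\lesssim a^{-2}$, which is shorter than $\tau_{\mathrm{dec}}\sim1/(a\eta)$ whenever $\eta\ll a$.) This is precisely the obstruction created by the absence of Galilean invariance: one needs an independent frequency parameter $N$, via the modulation $e^{iNx}$, to make the $H^s$ norm small for $s<0$ without shrinking the amplitude that drives the decoherence. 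Relatedly, your assertion that the bookkeeping ``yields precisely $s>-15/14$'' is not substantiated; in the paper that threshold arises from the requirement $\lambda^{3/2}N^{-2+}\ll 1$ with $\lambda=N^{-(s+1/2)/(s+3/2)}$, i.e.\ from the $N^{-2}$ accuracy of the NLS approximation against the scaling exponent, a mechanism absent from your scheme.
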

 We present a heuristic idea of the proof of Theorem \ref{thm: ill-posedness}.
 The key idea is to construct an approximate solution to $\eqref{eqn:fourth order NLS}$ by using the solution of \eqref{eqn: cubic NLS,focusing defocusing} and 
 use this approximate solution to transfer the mild ill-posedness result (Theorem \ref{thm: ill-posedness defocsuing NLS}) of \eqref{eqn: cubic NLS,focusing defocusing} to \eqref{eqn:fourth order NLS}.
 
 We point out that time-localized solutions to \eqref{eqn: cubic NLS,focusing defocusing} have spacetime Fourier transform near the parabola $\tau=\xi^2$, but time-localized solutions to \eqref{eqn:fourth order NLS} have spacetime Fourier transform near the quartic $\tau=\xi^4$.
 Choose an $N\gg 1 $. Let $u(t,x)$ be a linear solution to $\left(i\partial_t-\partial_x^4\right)u=0$ with $u(0)=u_0$. We use the following change of variables
 \begin{align*}
 \xi:=N+\frac{\xi'}{\sqrt{6}N}.
 \end{align*}
 Then, $\tau=\xi^4$ leads to $\tau=N^4+\frac{4}{\sqrt{6}}N^2\xi'+\tau'$ where
 \begin{align*}
 \tau'=(\xi')^2+\frac{2}{3\sqrt{6}N^2}\xi'^3+\frac{1}{36N^4}\xi'^4.
 \end{align*}
 By using these change of variables, we have
 \begin{equation} 
 \begin{split}
 u(t,x)=&\int_{\mathbb{R}\times \mathbb{R}}e^{it\tau}e^{ix\xi}\widehat{u}_0(\xi)\,d\xi d\tau\\
 =&\int_{\mathbb{R}\times\mathbb{R}}e^{it\left(N^4+\frac{4}{\sqrt{6}}N^2\xi'+\tau'  \right)+ix\left( N+\frac{\xi'}{\sqrt{6}N} \right) }\widehat{u}_0(\xi')\,d\xi' d\tau'\\
 =&e^{iN^4t+ixN}\int_{\mathbb{R}\times\mathbb{R}}e^{i\tau't+i\xi'\left(\frac{x}{\sqrt{6}N}+\frac{4}{\sqrt{6}}N^2t  \right) }\widehat{u}_0(N+\frac{\xi'}{\sqrt{6}N})\,d\tau'd\xi'.
 \end{split}
 \end{equation} 
 Thus, for $\abs{\xi'}\ll N $ , this change of variables converts $\tau=\xi^4 $  to an approximate $\tau'\approx \xi'^2$. Therefore, we obtain an approximate solution 
 \begin{align*}
 u(t,x)\approx e^{iN^4t+ixN}v\left(t, \frac{x}{\sqrt{6}N}+\frac{4}{\sqrt{6}}N^2 t \right).
 \end{align*} 
 to the fourth order linear Schr\"odinger equation where $v(t,x)$ solves the linear Schr\"odinger equation.
 Indeed, if $v$ solves \eqref{eqn: cubic NLS,focusing defocusing}, then the function
 \begin{align}\label{eqn:approximate fourth order NLS}
 u(t,x):= e^{iN^4t+ixN}v\left(t, \frac{x}{\sqrt{6}N}+\frac{4}{\sqrt{6}}N^2 t \right)
 \end{align}
 is an approximate solution to $\eqref{eqn:fourth order NLS}$ . We present the details in Section \ref{sec:ill-posedness}.

 \textbf{Organization of paper.} The rest of the paper is organized as follows: In Section \ref{sec: linear estimate}, we collect the estimates that capture the linear dispersive effects.  
 In Sections \ref{sec: local well-posedness} and \ref{sec: Global well-posedness and correction term strategy}, we prove that $\eqref{eqn:fourth order NLS}$ is locally and globally well-posed in $H^{s}\left(\mathbb{R}\right), s\geq -1/2$ respectively.
 In Section \ref{sec:ill-posedness} we show that $\eqref{eqn:fourth order NLS}$ is mildly ill-posed in the sense that the solution map fails to be locally uniformly continuous in $H^s(\mathbb{R}), s<-\frac{1}{2}$.

 \textbf{Notation.} We use $A\lesssim B $ if $A\leq CB$ for some $C>0$ and $A=O(B)$ if $A\lesssim B$. We use $X\sim Y$ when $X\lesssim Y$ and $Y\lesssim X$. Moreover, we use $A\ll B$ if $A\leq \frac{1}{C}B$, where $C$ is a sufficiently large constant. We also write $A^{\pm}$ to mean $A^{\pm \varepsilon}$ for any $\varepsilon>0$.

 Given $ p\geq 1 $, we let $p'$ be the H\"older conjugate of $p$ such that $ \frac{1}{p}+\frac{1}{p'}=1$. We denote $L^p=L^p\left(\mathbb{R}^d\right)$ be the usual Lebesgue space. We also define the Lebesgue space $L^q\left(I,L^r\right)$ be the space of measurable functions from an interval $I\subset \mathbb{R}$ to $L^r$ whose $L^q\left(I,L^r\right)$ norm is finite, where
 \begin{align*}
 \Vert u\Vert_{L^q\left(I,L^r \right)}=\left(    \int_I \Vert u(t) \Vert_{L^r}^q       \right)^{\frac{1}{q}}.
 \end{align*}
 We may write $L_t^qL_x^r\left(I\times \mathbb{R}\right)$ instead of  $L^q\left(I,L^r\right)$.

 We denote the space time Fourier transform of $u(t,x)$ by $\widetilde{u}(\tau,\xi)$ or $\mathcal{F}u$
 \begin{align*}
 \widetilde{u}(\tau,\xi)=\mathcal{F}u(\tau,\xi)=\int e^{-it\tau-ix\xi}u(t,x)\,dtdx.
 \end{align*}
 On the other hand, the space Fourier transform  of $u(t,x)$ is denoted by
 \begin{align*}
 \widehat{u}(t,\xi)=\mathcal{F}_xu(t,\xi)=\int e^{-ix\xi}u(t,x)\,dx.
 \end{align*}
 The fractional differential operators are defined by
 \begin{align*}
 \widehat{D^{\alpha}u}(\xi)&=\vert \xi \vert^{\alpha}\widehat{u}(\xi),\\
 \widehat{\langle D \rangle^\alpha u}(\xi)&=\langle \xi \rangle^\alpha \widehat{u}(\xi), \quad \alpha \in\mathbb{R},  
 \end{align*}
 and the biharmonic Schr\"odinger semigroup is defined by
 \begin{align}
 e^{-it\partial_x^4}g=\mathcal{F}_x^{-1}(e^{-it\vert \xi \vert^4}\mathcal{F}_xg)
 \end{align}
 for any tempered distribution $g$.
 
 Lastly, for each dyadic number $N\in 2^{\mathbb{Z}}$, the Littlewood-Paley projection $P_N,P_{\leq N}, P_{>N}$ are smoothed out projections to the regions $\vert \xi \vert \sim N, \vert \xi \vert\leq 2N, \vert \xi \vert >N $ respectively. More precisely, let $\varphi(\xi)$ be a bump function supported on the set $\left\{\vert \xi \vert \leq 2 \right\}$ which equals 1 on the unit ball $\left\{\vert \xi \vert \leq 1 \right\}$.
 For any dyadic number $N=2^k,k\in \mathbb{Z}$, we define the following Littlewood-Paley projections:
 \begin{align*}
 \widehat{P_{\leq N}u}(\xi)=&\varphi\left(\xi/N\right)\widehat{u}(\xi),\\
 \widehat{P_{> N}u}(\xi)=&\left(1-\varphi\left(\xi/N\right)\right)\widehat{u}(\xi),\\
 \widehat{P_ Nu}(\xi)=&\left(\varphi\left(\xi/N\right)-\varphi\left(2\xi/N\right)\right) \widehat{u}(\xi).
 \end{align*}
 They commute with derivative operators $D^\alpha, \langle D \rangle^\alpha$ and the semigroup $e^{it\partial_x^4}$. We also use the notation $u_N=P_Nu$ if there is no confusion. 
 Furthermore,
 they obey the following easily verified 
 Bernstein inequalities for $1\leq p \leq q \leq \infty$:
 \begin{align}
 \Vert P_{\leq N} f \Vert_{L_x^q(\mathbb{R})}&\lesssim_{q,p} N^{\frac{1}{p}-\frac{1}{q} } \Vert P_{\leq N} f \Vert_{L_x^p}\\
 \Vert P_{ N} f \Vert_{L_x^q(\mathbb{R})}&\lesssim_{q,p} N^{\frac{1}{p}-\frac{1}{q} } \Vert P_{ N} f \Vert_{L_x^p}\label{Bs}
 \end{align}
 We also use $a+$ (and $a-$) to denote $a+\eta$ (and $a-\eta,$ respectively) for arbitrarily small $\eta\ll 1$.

 \textbf{Acknowledgements.} The author would like to appreciate his advisor Soonsik Kwon for helpful discussion and encouragement. The author is also grateful to Chulkwang Kwak for  pointing out an unclear portion in the proof of Lemma \ref{lem:almost conservation law} and helpful discussion. The author is also grateful to Justin forlano for helpful discussion related to writing introduction part. 
 The author is also grateful to the anonymous referee for their helpful comments that have improved the presentation of this paper. The author is partially supported by NRF-2018R1D1A1A09083345 (Korea).

 \section{Linear estimates}\label{sec: linear estimate}
 
 In this section, we collect the estimates that capture the linear dispersive effects. As in NLS or KdV, the proofs are fairly standard, but 
 there are no places written for our purpose. Therefore, the proofs are self-contained. We follow the argument in \cite{T2016}

 \subsection{Dispersive estimate}
 In this subsection, we prove the following dispersive estimates.
 \begin{lemma}\label{prop:decay}
 	For any $\alpha \in [0,1]$ and $t\in \mathbb{R}$, we have
 	\begin{align}\label{eqn: dispersive estimate L^1 L^infty}
 	\left\| D^{\alpha}e^{it\partial_x^4}u_0\right\|_{L_x^{\infty}}\lesssim C_\alpha \left|t \right|^{-\frac{\alpha+1}{4}}\left\| u_0 \right\|_{L_x^1}.
 	\end{align}
 \end{lemma}   
 \begin{remark}
 	We observe that obtaining the $L^1 \to L^{\infty}$ estimate is more involved since there is no explicit formula for linear biharmonic Schr\"odinger operator.
 	It is important to notice that the dispersion is stronger for high frequencies and weaker for low frequencies compared to the usual linear Schr\"odinger evolution. 
 \end{remark}
 Before we prove Lemma \eqref{prop:decay}, we first prove the following oscillatory integral estimate.
 \begin{lemma}
 	Let $\phi$ be a smooth cutoff function supported in $\left\{ \frac{1}{2}\leq |\xi|\leq 2 \right\}$. Then for each $x,t \in \mathbb{R}$, we have
 	\begin{equation}
 	\label{os1}
 	\left|\int_{\mathbb{R}}e^{it\xi^4+ix\xi}\phi\left(\xi\right)  \,d\xi\right| \lesssim  \left\langle t \right\rangle^{-\frac{1}{2}},
 	\end{equation}
 	and if $|x|\gg |t|$ or $\vert  x \vert \ll \vert t \vert $, then
 	\begin{equation}
 	\label{os2}
 	\left|\int_{\mathbb{R}}e^{it\xi^4+ix\xi}\phi\left(\xi\right)\,d\xi  \right| \lesssim \frac{1}{\max\left(|x|^2,|t|^2\right)}.  
 	\end{equation}
 \end{lemma}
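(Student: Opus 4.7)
The plan is to analyze $I(t,x):=\int_{\mathbb{R}} e^{i\Phi(\xi)}\phi(\xi)\,d\xi$ via stationary/non-stationary phase analysis of the phase $\Phi(\xi)=t\xi^4+x\xi$. On the support of $\phi$ one has $|\xi|\sim 1$, so $\Phi'(\xi)=4t\xi^3+x$, $|\Phi''(\xi)|=12|t|\xi^2\sim|t|$, and $|\Phi^{(k)}(\xi)|\lesssim|t|$ for $k=2,3,4$. The critical point equation $\xi^3=-x/(4t)$ lies in the support of $\phi$ only when $|x|\sim|t|$, which is precisely the regime excluded in the hypothesis of \eqref{os2}.

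For \eqref{os1}, I would split into two cases. If $|t|\lesssim 1$, the trivial bound $|I|\leq\|\phi\|_{L^1}\lesssim 1\sim\langle t\rangle^{-1/2}$ suffices. If $|t|\gg 1$, I would apply the van der Corput lemma at second order: since $|\Phi''(\xi)|\gtrsim|t|$ uniformly on the compact support of $\phi$, one obtains $|I|\lesssim|t|^{-1/2}\sim\langle t\rangle^{-1/2}$. Combining the two cases proves \eqref{os1}.

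For \eqref{os2}, set $N:=\max(|x|,|t|)$. I would first show $|\Phi'(\xi)|\gtrsim N$ on the support of $\phi$ in each non-stationary regime: if $|x|\gg|t|$ then $|4t\xi^3|\lesssim|t|\ll|x|$ so $|\Phi'|\sim|x|=N$, while if $|x|\ll|t|$ then $|4t\xi^3|\gtrsim|t|\gg|x|$ so $|\Phi'|\sim|t|=N$. Since $\phi$ is compactly supported, I would integrate by parts twice using the identity $e^{i\Phi}=(i\Phi')^{-1}\partial_\xi e^{i\Phi}$; the boundary terms vanish, and the resulting amplitude is a finite sum of terms of the form $\phi^{(\alpha_0)}\Phi^{(\alpha_1)}\cdots\Phi^{(\alpha_p)}/(\Phi')^{2+p}$ with each $\alpha_i\geq 2$. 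Each such term has magnitude $\lesssim|t|^p/N^{2+p}\lesssim 1/N^2$, since $|t|\leq N$ in both regimes. Integrating over the compact support of $\phi$ then gives $|I|\lesssim 1/N^2$, which is \eqref{os2}.

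The main obstacle is merely the bookkeeping in the integration-by-parts step for \eqref{os2}: one must verify that \emph{every} term produced by two applications of the adjoint operator is controlled by $1/N^2$, which ultimately reduces to the elementary inequality $|\Phi^{(k)}|/|\Phi'|\lesssim|t|/N\leq 1$. No genuine stationary phase argument is needed here because the hypothesis $|x|\gg|t|$ or $|x|\ll|t|$ excludes the critical regime $|x|\sim|t|$, which is instead absorbed into the weaker $\langle t\rangle^{-1/2}$ bound of \eqref{os1}.
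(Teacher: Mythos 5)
Your proposal is correct and follows essentially the same route as the paper: the van der Corput lemma at second order (with $|\Phi''|\sim |t|$ on the support of $\phi$) for \eqref{os1}, and two integrations by parts using the non-stationarity bound $|\Phi'|=|4t\xi^3+x|\gtrsim \max(|x|,|t|)$ for \eqref{os2}, with the resulting terms controlled exactly as in the paper via $|\Phi^{(k)}|\lesssim |t|\leq \max(|x|,|t|)$. Your explicit split into $|t|\lesssim 1$ and $|t|\gg 1$ for \eqref{os1} is a minor presentational difference that the paper leaves implicit in the Japanese bracket.
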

 
 \begin{proof}
 	Observe that the second derivative of the phase function on $\left\{ \frac{1}{2}\leq |\xi|\leq 2\right\}$ is given by
 	\begin{align*}
 	\left| \partial_{\xi}^2\left(\xi^4+\frac{\xi x}{t} \right)\right|=12|\xi|^2   \gtrsim 1.
 	\end{align*}
 	Hence, by the Vander Corput lemma, we have	
 	\begin{align*}
 	\left|\int_{\mathbb{R}}e^{it\xi^4+ix\xi}\phi\left(\xi\right)  \,d\xi\right| \lesssim  \left\langle t \right\rangle^{-\frac{1}{2}}.    
 	\end{align*}
 	For $|x| \gg |t|$ or $|x| \ll |t|$, the first derivative of the phase function is given by 
 	\begin{equation}
 	\label{os}
 	\left| \partial_{\xi}\left(\xi^4t+\xi x  \right)\right|=\left| 4\xi^3t+x \right|   \gtrsim \text{max}\left(|x|,|t|\right).    
 	\end{equation}
 	By two integration by parts and the bound (\ref{os}), we have
 	\begin{align*}
 	\left|\int_{\mathbb{R}}e^{it\xi^4+ix\xi}\phi\left(\xi\right)\,d\xi  \right|&=\left\vert\int_{\mathbb{R}}\frac{\phi\left(\xi\right)   }{4it\xi^3+ix}\partial_{\xi}e^{it\xi^4+ix\xi}\,d\xi \right\vert\\
 	&=\left\vert\int_{\mathbb{R}}\partial_{\xi}\left(\frac{\phi\left(\xi\right)   }{4it\xi^3+ix}\right)e^{it\xi^4+ix\xi}\,d\xi\right\vert\\
 	&=\left\vert \int_{\mathbb{R}}\partial_{\xi}\left(\frac{1}{4it\xi^3+ix}\partial_{\xi} \left(\frac{\phi\left(\xi\right)    }{4it\xi^3+ix}\right)\right) e^{it\xi^4+ix\xi}\,d\xi\right\vert\\
 	&\lesssim \sup\limits_{\frac{1}{2}\leq|\xi|\leq 2}\left| \partial_{\xi}\left(\frac{1}{4it\xi^3+ix}\partial_{\xi} \left(\frac{\phi\left(\xi\right)    }{4it\xi^3+ix}\right)\right)    \right| \\
 	&\lesssim  \left(   \frac{1}{\left(4\xi^3t+x \right)^2}+\frac{|t|}{\left| 4\xi^3t+x \right|^3} +\frac{t^2}{\left(4\xi^3t+x \right)^{4}}      \right)\\
 	&\lesssim  \frac{1}{\text{max}\left( |x|^2,|t|^2\right)}.
 	\end{align*}	
 	
 	\noindent
 	Hence, we obtain the desired result.
 \end{proof}

 \begin{proof}[Proof of Lemma \ref{prop:decay}]
 	Fix $\alpha \in [0,1]$. 
 	We define the kernel $K_t\left(x\right)$ of the operator $D^{\alpha}e^{it\partial_x^4}$ as follows:
 	\begin{align*}
 	K_t\left(x\right)=\mathcal{F}^{-1}\left(\vert \xi \vert^{\alpha} e^{it \vert \xi \vert^4} \right)\left(x\right).
 	\end{align*}
 	The functions $K_t\left(x\right)$ are defined a priori as distributions. Later, we will show that they are in fact functions. For any tempered distribution $T \in \mathcal{S}^{*}$ and invertible linear transformation $L$, we have $\mathcal{F}\left(T\circ L\right)=\vert \det L \vert^{-1}\left(\mathcal{F}T\right)\circ \left(L^{*}\right)^{-1}$. Therefore we obtain
 	\begin{align*}
 	K_t\left(x\right) =t^{-\frac{\alpha+1}{4}}K_1\left(\frac{x}{t^{\frac{1}{4}}} \right).    
 	\end{align*}  	
 	Note that	   
 	\begin{align*}
 	D^{\alpha}e^{it\partial_x^4}u_0=\mathcal{F}^{-1}\left(|\xi|^{\alpha}e^{it|\xi|^4}\right)*u_0=K_t*u_0.    
 	\end{align*}
 	Hence, by Young's inequality, it is enough to show that $ K_1(x)$ is a bounded function.	
 	For any Schwartz function $f\in \mathcal{S}\left(\mathbb{R}\right)$, we have $ P_{\leq 1}f+\sum\limits_{k=1}^n P_kf \to f$ in $\mathcal{S}\left(\mathbb{R}\right)$. Here $P_kf$ is the standard Littlewood-Paley projectors to the regions $\left\{ 2^{k-1}\leq \vert \xi \vert \leq 2^{k+1} \right\}$. Therefore, we have
 	\begin{align*}
 	\left\langle K_1, f \right \rangle =& \left \langle \mathcal{F}^{-1}\left(\vert \xi \vert^{\alpha}e^{i\xi^4} \right) , f \right \rangle\\
 	=&\left \langle \mathcal{F}^{-1}\left(\vert \xi \vert^{\alpha}e^{i\xi^4} \right) , P_{\leq 1}f \right \rangle+\lim\limits_{n\to \infty} \sum\limits_{k=1}^n \left \langle \mathcal{F}^{-1}\left(\vert \xi \vert^{\alpha}e^{i\xi^4} \right) , P_kf \right \rangle\\
 	=& \left \langle \mathcal{F}^{-1}\left(\vert \xi \vert^{\alpha}e^{i\xi^4} \varphi\left(\xi\right) \right) , f \right \rangle+\lim\limits_{n\to \infty} \sum\limits_{k=1}^n \left \langle \mathcal{F}^{-1}\left(\vert \xi \vert^{\alpha}e^{i\xi^4}\psi\left(\frac{\xi}{2^k}\right) \right) , f \right \rangle.
 	\end{align*}
 	Here $\varphi$ is a bump function supported on $\left\{ \vert \xi \vert \leq 2 \right\}$ which equals 1 on $\left\{\vert \xi \vert \leq 1 \right\}$ and $\psi$ is the function given by $\psi\left(\xi\right):=\varphi\left(\xi\right)-\varphi\left(2\xi\right)$.
 	Therefore, we can identify $K_1\left(x\right)$ as the distributional limits: 
 	\begin{align*} 
 	K_1\left(x\right)=\mathcal{F}^{-1}\left(\vert \xi \vert^{\alpha}e^{i\xi^4}\varphi\left(\xi\right) \right)\left(x\right)+\lim\limits_{n \to \infty}\sum\limits_{k=1}^n \mathcal{F}^{-1}\left(\vert \xi \vert^{\alpha} e^{i\xi^4}\psi\left(\frac{\xi}{2^k}\right)   \right)\left(x\right).
 	\end{align*}
 	Obviously, the first summand is a bounded function. The second term is
 	\begin{align*}
 	\sum\limits_{k=1}^{\infty}\int_{\mathbb{R}}e^{i\xi^4+ix\xi}|\xi|^{\alpha}\psi\left( \frac{\xi}{2^{k}}\right) \,d\xi=\sum\limits_{k=1}^{\infty} 2^{k\left(1+\alpha\right)}\int_{\mathbb{R}}e^{i2^{4k}\xi^4+i 2^kx \xi}|\xi|^{\alpha}\psi\left(\xi\right) \,d\xi. 
 	\end{align*}
 	By using the oscillatory integral estimate \eqref{os1} and \eqref{os2}, we have
 	\begin{align*}
 	&\sum\limits_{k=1}^{\infty} 2^{k\left(1+\alpha\right)}\int_{\mathbb{R}}e^{i2^{4k}\xi^4+i 2^kx \xi}|\xi|^{\alpha}\psi\left(\xi\right) \,d\xi\\
 	&\lesssim \sum\limits_{\substack{2^{3k}\gg |x| \\ k\geq1}}2^{2k}\cdot 2^{-8k}+\sum\limits_{\substack{2^{3k}\ll |x| \\ k\geq 1}}2^{2k}\cdot 2^{-2k}\left|x \right|^{-2}+\sum\limits_{\substack{2^{3k} \sim |x| \\ k\geq 1}} 2^{2k}\cdot 2^{-2k}\\
 	&\lesssim  \sum\limits_{k=1}^{\infty}2^{-6k}+\sum\limits_{\substack{2^{3k} \sim |x| \\ k\geq 1}}1\\
 	&\lesssim  1.  
 	\end{align*}  
 	
 	\noindent
 	This completes the proof of Lemma \ref{prop:decay}.
 \end{proof}
 
 \begin{lemma}[Dispersive decay estimate]\label{prop: dispersive estimate L^r } For any $r\geq 2$ and $\alpha \in [0,1]$, we have
 	\begin{align}
 	\left\|D^{\alpha\left(1-\frac{2}{r}\right)} e^{it\partial_x^4} u_0  \right\|_{L_x^{r}\left(\mathbb{R}\right)}\lesssim \frac{1}{\left|t \right|^{\frac{1+\alpha}{4}\left(1-\frac{2}{r}\right)}}\left\| u_0 \right\|_{L_x^{r'}\left(\mathbb{R}\right)}.    
 	\end{align}
 \end{lemma}
 
 \begin{proof}
 	Note that 
 	\begin{align*}
 	\left\| D^{\alpha}e^{it\partial_x^4 }u_0 \right\|_{L_x^{\infty}}&\lesssim \frac{1}{\left| t \right|^{\frac{1+\alpha}{4}}} \left\|u_0 \right\|_{L_x^1},\\
 	\left\|e^{it\partial_x^4}u_0 \right\|_{L_x^2}&\lesssim \left\| u_0 \right\|_{L_x^2}.
 	\end{align*}
 	Consider the analytic family of operators 
 	\begin{align*}
 	T_z=\left|t \right|^{z\frac{\alpha+1}{4}}D^{\alpha z}e^{it\partial_x^4},    
 	\end{align*}
 	where $z=x+iy, x\in \mathbb{R}, y\in [0,1]$.
 	
 	Therefore, complex interpolation between the lines $\text{Re}(z)=0$ and $\text{Re}(z)=1$ yields the theorem.
 \end{proof}

 \subsection{Strichartz estimates}
 In this subsection, we present the following Strichartz estimates.
 \begin{lemma}[Strichartz estimates]\label{St} For any $\alpha \in [0,1]$ , we call a triplet $\left(q,r,\alpha \right)$ admissible exponents if $ r \geq 2, q\geq\frac{8}{(1+\alpha)}$ and $\frac{4}{q}+\frac{1+\alpha}{r}=\frac{1+\alpha}{2}$. Then, for any admissible exponents $\left(q,r,\alpha\right)$ and $\left(\tilde{q},\tilde{r},\alpha \right)$, we have
 	
 	\begin{align}
 	\left\| D^{\frac{\alpha}{2}\left(1-\frac{2}{r}\right)} e^{it\partial_x^4}u_0  \right\|_{L_t^qL_x^r\left(\mathbb{R}\times\mathbb{R}\right)}\lesssim_{q,r} & \left\| u_0 \right\|_{L_x^2\left(\mathbb{R}\right)}, \label{eqn: LtLx Strichartz}
 	\\ \left\| \int_{\mathbb{R}}  D^{\frac{\alpha}{2}\left(1-\frac{2}{r}\right)}e^{-it'\partial_x^4}F\left(t'\right)  \,dt' \right\|_{L_x^2\left(\mathbb{R}\right)}\lesssim_{\tilde{q},\tilde{r}} & \left\| F \right\|_{L_t^{\tilde{q}'}L_x^{\tilde{r}'}\left(\mathbb{R}\times \mathbb{R}\right)}, \label{eqn: LtLx homogeneou Strichartz}
 	\\
 	\left\| \int_{\mathbb{R}}  D^{\alpha\left(1-\frac{2}{r}\right)}e^{i(t-t')\partial_x^4}F\left(t'\right) \,dt'  \right\|_{L_t^qL_x^r\left(\mathbb{R}\times\mathbb{R}\right)}\lesssim_{q,r,\tilde{q},\tilde{r}} & \left\| F \right\|_{L_t^{\tilde{q}'}L_x^{\tilde{r}'}\left(\mathbb{R}\times\mathbb{R}\right)}. \label{eqn: LtLx inhomogeneous Strichartz}
 	\end{align}
 \end{lemma}
 
 \begin{proof}
 	From the standard $TT^*$ argument, it suffices to show the last estimate. Note that 
 	\begin{align*}
 	T:=&D^{\frac{\alpha}{2}\left(1-\frac{2}{r}\right)}e^{it\partial_x^4},\\
 	T^*:=& \int_{\mathbb{R}}D^{\frac{\alpha}{2}\left(1-\frac{2}{r}\right)}e^{it\partial_x^4} \,dt.
 	\end{align*}
 	Then, by the Minkowski and Hardy-Littlewood sobolev inequality, we have the following $TT^*$ estimate
 	\begin{align*}
 	\left\| \int_{\mathbb{R}}D^{\alpha \left(1-\frac{2}{r}\right)}e^{i(t-t')\partial_x^4}F\left(t'\right)\,dt'
 	\right\|_{L_t^qL_x^r}&\lesssim  \left\|  \int_{\mathbb{R}} \frac{1}{\left|t-t' \right|^{\frac{1+\alpha}{4}\left(1-\frac{2}{r}\right)}} \left\|F(t') \right\|_{L_x^{r'}}\,dt'    \right\|_{L_t^q}\\
 	&\lesssim  \left\|  F\right\|_{L_t^{q'}L_x^{r'}}.
 	\end{align*}
 	
 	\noindent
 	This completes the proof of Lemma \ref{St}.
 \end{proof}
 
 \begin{remark}
 	In particular, we mainly use the following estiamates:
 	\begin{align}
 	\left\| D^{\frac{1}{2}}e^{it\partial_x^4}u_0 \right\|_{L_t^4L_x^{\infty}\left( \mathbb{R}\times\mathbb{R}\right)}&\lesssim \left\| u_0 \right\|_{L^2\left(\mathbb{R}\right)},\\
 	\left\| e^{it\partial_x^4}u_0 \right\|_{L_t^8L_x^\infty\left(\mathbb{R}\times \mathbb{R}\right)}&\lesssim  \left\| u_0 \right\|_{L^2\left(\mathbb{R}\right)},\\
 	\Vert e^{it\partial_x^4}u_0      \Vert_{L_t^{\infty}L_x^2\left(\mathbb{R}\times\mathbb{R} \right)}&\lesssim  \left\| u_0 \right\|_{L^2\left(\mathbb{R}\right)}.
 	\end{align}
 \end{remark}

 %Observe that the local smoothing estimate loses the integrability in $x$. To overcome this loss, we need the following maximal function estimate.
 %\begin{theorem}[Maximal function estimate]
 %Let $s>1$ and $u_0 \in H^{s}$. Then
 %\begin{align}
 %\left( \int_{\mathbb{R}} \sup\limits_{t\in [-T,T] } \left |e^{it\partial_x^4}u_0 \right|^2\,dx   \right)^{\frac{1}{2}}\leq C \left\|u_0 \right\|_{H^s}.    
 %\end{align}
 %where the constant $C>0$ depends on $T$.
 %\end{theorem}
 
 %\begin{proof}
 %For the proof, see the Kenig-Ponce-Vega \cite{KPV1993}.	
 %\end{proof}	
 
 \subsection{Bilinear Strichartz estimate}
 In a low-high interaction, we need the following bilinear Strichartz estimates.
 \begin{lemma}[Bilinear Strichartz estimate]\label{LEM:bilin}
 	Let $N_1,N_2$ be dyadic numbers with $N_1\leq \frac{N_2}{8}$. Then, we have
 	\begin{align}\label{eqn:bilinear estimate}
 	\left\|e^{it\partial_x^4}\phi_{N_1} e^{it\partial_x^4}\phi_{N_2} \right\|_{L_{t,x}^2\left(\mathbb{R}\times \mathbb{R}\right)} \lesssim N_2^{-\frac{3}{2}}\left\|\phi_{N_1} \right\|_{L_x^2\left(\mathbb{R}\right)} \left\|\phi_{N_2} \right\|_{L_x^2\left(\mathbb{R}\right)}.
 	\end{align}
 \end{lemma}
 \begin{proof} 
 	Note that by duality, we have 
 	\begin{align*}
 	&\left\|e^{it\partial_x^4}\phi_{N_1} e^{it\partial_x^4}\phi_{N_2} \right\|_{L_{t,x}^2\left(\mathbb{R}\times \mathbb{R}\right)}\\
 	=&\left\|\int_{\mathbb{R}}e^{it\xi_1^4}\widehat{\phi_{N_1}}\left(\xi_1\right) e^{it\left(\xi-\xi_1\right)^4}\widehat{\phi_{N_2}}\left(\xi-\xi_1\right) \,d\xi_1 \right\|_{L_{t,\xi}^2\left(\mathbb{R}\times \mathbb{R}\right)}\\
 	=&\sup\limits_{\left\|\psi \right\|_{L_{t,\xi}^2}=1} \left|\int_{\mathbb{R}\times\mathbb{R}} \int_{\mathbb{R}}e^{it\xi_1^4}\widehat{\phi_{N_1}}\left(\xi_1\right) e^{it\left(\xi-\xi_1\right)^4}\widehat{\phi_{N_2}}\left(\xi-\xi_1\right)\psi\left(t,\xi
 	\right) \,d\xi_1 d\xi dt \right|.
 	\end{align*}
 	Hence, it suffcies to consider the integral 
 	
 	\begin{align*}
 	&\left|\int_{\mathbb{R}\times\mathbb{R}} \int_{\mathbb{R}}e^{it\xi_1^4}\widehat{\phi_{N_1}}\left(\xi_1\right) e^{it\left(\xi-\xi_1\right)^4}\widehat{\phi_{N_2}}\left(\xi-\xi_1\right)\psi\left(t,\xi
 	\right) \,d\xi_1 d\xi dt \right|\\
 	=& \left|\int_{\mathbb{R}\times\mathbb{R}} \int_{\mathbb{R}}e^{it\xi_1^4}\widehat{\phi_{N_1}}\left(\xi_1\right) e^{it \xi_2^4}\widehat{\phi_{N_2}}\left(\xi_2\right)\psi\left(t,\xi_1+\xi_2
 	\right) \,d\xi_1 d\xi_2 dt \right|\\
 	=& \left|\int_{\mathbb{R}} \int_{\mathbb{R}}\widehat{\phi_{N_1}}\left(\xi_1\right)\widehat{\phi_{N_2}}\left(\xi_2\right)\mathcal{F}_t\psi\left(\xi_1^4+\xi_2^4,\xi_1+\xi_2 \right) \,d\xi_1 d\xi_2  \right|.
 	\end{align*}
 	We consider the change of variable $\left(\xi_1,\xi_2\right) \to \left(\eta_1(\xi_1,\xi_2),\eta_2(\xi_1,\xi_2)\right)=\left(\xi_1+\xi_2,\xi_1^4+\xi_2^4 \right)$ with the Jacobian $\left| J \right|=4\left| \xi_1^3-\xi_2^3\right|\sim N_2^3$. Hence, 
 	\begin{align*}
 	\left| J \right|d\xi_1 d\xi_2=d\eta_1 d\eta_2 \quad or \quad d\xi_1 d\xi_2    = \left| J \right|^{-1} d\eta_1 d\eta_2.
 	\end{align*}
 	So, from H\"older's inequality, the Plancherel theorem and the change of variable, we have
 	\begin{align*}
 	&\left|\int_{\mathbb{R}} \int_{\mathbb{R}}\widehat{\phi_{N_1}}\left(\xi_1\right)\widehat{\phi_{N_2}}\left(\xi_2\right)\mathcal{F}_t\psi\left(\xi_1^4+\xi_2^4,\xi_1+\xi_2 \right) \,d\xi_1 d\xi_2  \right|\\
 	&\lesssim \left\| \mathcal{F}_t \psi \right\|_{L_{\eta_1,\eta_2}^2} \left|\int_{ \eta_1\in \mathbb{R}} \int_{\eta_2\in\mathbb{R}}\left|\widehat{\phi_{N_1}}\left(\eta_1,\eta_2 \right)\widehat{\phi_{N_2}}\left(\eta_1,\eta_2\right)\right|^{2}  \left| J\right|^{-2} \,d\eta_1 d\eta_2 \right|^{\frac{1}{2}}\\
 	&\lesssim   N_{2}^{-\frac{3}{2}}\left|\int_{ \xi_1\in \mathbb{R}} \int_{\xi_2\in\mathbb{R}}\left|\widehat{\phi_{N_1}}\left(\xi_1 \right)\widehat{\phi_{N_2}}\left(\xi_2\right)\right|^{2}  \,d\xi_1 d\xi_2 \right|^{\frac{1}{2}}\\
 	&\lesssim  N_2^{-\frac{3}{2}}\left\|\phi_{N_1} \right\|_{L_x^2\left(\mathbb{R}\right)} \left\|\phi_{N_2} \right\|_{L_x^2\left(\mathbb{R}\right)}.
 	\end{align*}
 	
 	\noindent
 	This completes the proof of Lemma \ref{LEM:bilin}. 
 \end{proof}

 \section{Local well-posedness of the fourth order NLS}\label{sec: local well-posedness}
 In this section, we prove local well-posedness of \eqref{eqn:fourth order NLS}. We use the contraction principle on the $X^{s,b}$-space. 
 
 \subsection{Duhamel formulation and $X^{s,b}$-space}
 In this subsection, we introduce the following Duhamel formulation and $X^{s,b}$-space.
 
 By expressing \eqref{eqn:fourth order NLS} in the Duhamel formulation, we have 
 \begin{align}\label{eqn:Duhamel}
 u(t)=e^{it\partial_x^4}u_0\mp\int_0^te^{i(t-t')}F(u)(t')\,dt',
 \end{align}
 where $F(u)=\abs{u}^2u=u\overline{u}u$. Let $\eta$ be a smooth cutoff function supported on $[-2,2],\eta=1$ on $[-1,1]$ and $\eta_{\delta}(t)=\eta(\frac{t}{\delta})$. If $u$ satisfies 
 \begin{align}\label{eqn:modified Duhamel}
 u(t)=\eta(t)e^{it\partial_x^4}u_0\mp\eta(t)\int_0^te^{i(t-t')\partial_x^4}\eta_{\delta}(t')F(\eta u )(t')  \,dt'
 \end{align}
 for some $\delta \ll 1$, then it also satisfies $\eqref{eqn:Duhamel}$ on $[-\delta,\delta]$. Hence, we consider $\eqref{eqn:modified Duhamel}$ in the following.
 
 Next, let us recall some standard notations and facts.
 We denote the $X^{s,b}_{\tau=-\xi^4}\left(\mathbb{\mathbb{R}\times\mathbb{R}}\right)$  by  $X^{s,b}$. The $X^{s,b}$ space is defined to be the closure of the Schwartz functions $\mathcal{S}(\mathbb{R}\times \mathbb{R})$ under the norm
 \begin{align*}
 \Vert u \Vert_{X^{s,b}_{\tau=-\xi^4}\left(\mathbb{R}\times\mathbb{R}\right)}:=\Vert \langle \xi \rangle^s \langle \tau+\xi^4 \rangle^b \widetilde{u}(\tau,\xi)\Vert_{L^2_{\tau,\xi}(\mathbb{R}\times \mathbb{R})}.
 \end{align*}
 Next, we state the standard facts related to $X^{s,b}$ space.
 \begin{lemma}
 	Let $b>\frac{1}{2}$ and $s\in\mathbb{R}$. Then for any $u\in X^{s,b}_{\tau=-\xi^4}\left(\mathbb{R}\times \mathbb{R}\right)$, we have 
 	\begin{align}
 	\Vert u \Vert_{C_tH_x^s\left(\mathbb{R}\times \mathbb{R}\right)}\lesssim \Vert u \Vert_{X^{s,b}_{\tau=-\xi^4}(\mathbb{R}\times\mathbb{R})}.
 	\end{align}
 	Furthermore, $X^{s,b}$ space enjoy the same Strichartz estimate that free solutions do.
 \end{lemma}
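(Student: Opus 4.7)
My plan is to prove both claims via the standard transfer principle, using the representation of $X^{s,b}$ functions as superpositions of modulated free solutions.

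For the embedding $X^{s,b} \hookrightarrow C_t H_x^s$, I would first write any $u \in X^{s,b}$ in Fourier representation. Setting $v(t,x) = e^{-it\partial_x^4} u(t,x)$, the $X^{s,b}$ norm becomes $\|u\|_{X^{s,b}} = \|\langle \xi\rangle^s \langle \tau \rangle^b \widetilde{v}(\tau,\xi)\|_{L^2_{\tau,\xi}}$. By Fourier inversion in $\tau$, we have
$$ v(t,x) = \frac{1}{2\pi} \int e^{it\tau} \widetilde{v}(\tau,\cdot)^\vee(x)\, d\tau, $$
so $u(t) = e^{it\partial_x^4} v(t) = \int e^{it\tau} e^{it\partial_x^4} \widetilde{v}(\tau,\cdot)^\vee \, d\tau$. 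Applying Minkowski's inequality and then Cauchy--Schwarz in $\tau$ (which requires $b > 1/2$), I obtain
$$ \sup_t \|u(t)\|_{H^s_x} \lesssim \int \|e^{it\partial_x^4} \widetilde{v}(\tau,\cdot)^\vee\|_{H^s_x}\, d\tau \lesssim \Bigl(\int \langle\tau\rangle^{-2b}\, d\tau\Bigr)^{1/2} \|u\|_{X^{s,b}}. $$
The continuity in $t$ then follows by approximating $u$ by Schwartz functions, for which continuity is clear.

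For the transfer of Strichartz (and more generally, any linear space-time estimate $\|e^{it\partial_x^4} \phi\|_Y \lesssim \|\phi\|_{L^2_x}$), the same decomposition gives
$$ u(t,x) = \int e^{it\tau} e^{it\partial_x^4}\bigl(\widetilde{v}(\tau,\cdot)^\vee\bigr)(x)\, d\tau. $$
Taking the $Y$-norm, using Minkowski, the Strichartz estimate for free solutions applied to each fixed $\tau$, and then Cauchy--Schwarz in $\tau$, I get $\|u\|_Y \lesssim \|u\|_{X^{0,b}}$ for $b > 1/2$, and analogously with weight $\langle \xi \rangle^s$ when $s \neq 0$. The main technical point is that the $\tau$-integral of $\langle\tau\rangle^{-2b}$ is finite precisely when $b > 1/2$; this is where the hypothesis is used, and it is the only obstacle, which is entirely routine.

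There is no real difficulty: both assertions follow from the elementary observation that $X^{s,b}$ functions with $b > 1/2$ are $L^1_\tau$-superpositions (after appropriate weighting) of time-modulated free evolutions, so any property of the free evolution that is preserved under superposition transfers directly. The continuity in time uses density of Schwartz functions, standard for this embedding.
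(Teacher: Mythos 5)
Your proof is correct and is exactly the standard transference-principle argument (decompose $u$ into an $L^1_\tau$-superposition of modulated free solutions, then use Minkowski and Cauchy--Schwarz with $b>\tfrac12$), which is precisely the proof in the reference \cite{Taobook} that the paper cites for this lemma rather than proving it itself. No discrepancies.
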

 For the proof of this lemma, see \cite{Taobook}.

 \begin{lemma}[Strichartz estimates and bilinear Strichartz estimates]
 	Let $(q,r,\alpha)$ be an admissible exponent. Then, for $b>\frac{1}{2}$ and $N_1\ll N_2$, we have
 	\begin{align}
 	\Vert             D^{\frac{\alpha}{2}\left(1-\frac{2}{r}\right)} u \Vert_{L_t^qL_x^r\left(\mathbb{R}\times\mathbb{R}\right)}&\lesssim \Vert u \Vert_{X^{0,b}\left(\mathbb{R}\times\mathbb{R}\right)},\label{eqn: X^{s,b} Strichartz}\\
 	\Vert P_{N_1}u_1 P_{N_2}u_2 \Vert_{L^2_{t,x}\left(\mathbb{R}\times\mathbb{R}\right) }&\lesssim N_2^{-\frac{3}{2}}\Vert P_{N_1}u_1 \Vert_{X^{0,b}\left(\mathbb{R}\times\mathbb{R}\right)}\Vert P_{N_2}u_2 \Vert_{X^{0,b}\left(\mathbb{R}\times\mathbb{R}\right)}.\label{eqn:bilinear X^{s,b}} 
 	\end{align}
 	In particular, we have
 	\begin{align}
 	\left\| D^{\frac{1}{2}}u \right\|_{L_t^4L_x^{\infty}\left( \mathbb{R}\times\mathbb{R}\right)}&\lesssim \left\| u \right\|_{X^{0,\frac{1}{2}+}\left(\mathbb{R}\times \mathbb{R} \right)}\label{eqn:Strichartz X^{s,b} 1/2},\\
 	\left\| u \right\|_{L_t^8L_x^\infty\left(\mathbb{R}\times \mathbb{R}\right)}&\lesssim \left\|u \right\|_{X^{0,\frac{1}{2}+}\left(\mathbb{R}\times \mathbb{R}\right)}\label{eqn: L_t^8L_x^infty X^{s,b 
 		}Strichartz},\\
 	\Vert u       \Vert_{L_t^{\infty}L_x^2\left(\mathbb{R}\times\mathbb{R} \right)}&\lesssim \Vert u \Vert_{X^{0,\frac{1}{2}+}\left(\mathbb{R}\times\mathbb{R}\right)}.\label{eqn:energy strichartz}
 	\end{align}
 \end{lemma} 
 \begin{proof}
 	It follows directly from the transference principle. For the proof, see \cite{Taobook}.
 \end{proof}	
 The following is a $X^{s,b}$ energy estimate of time cut-off solutions.  
 \begin{lemma}\label{non}
 	Let $b>\frac{1}{2}$, $s\in \mathbb{R}$ and let $u$ solve the inhomogeneous fourth order NLS eqution $i\partial_tu-\partial_x^4u=F$. Then we have
 	\begin{align}\label{eqn:X^{s,b} energy estimate}
 	\Vert \eta(t)u \Vert_{X^{s,b}}\lesssim \Vert u_0\Vert_{H^s} +\Vert  F \Vert_{X^{s,b-1}}.  
 	\end{align}	
 \end{lemma}        
 For the proof of this lemma, see for instance \cite{Taobook}.
 To gain a small time factor in a nonlinear term, we need the following lemma. 
 \begin{lemma}
 	For any $-\frac{1}{2}<b'<b<\frac{1}{2}$ and $s\in \mathbb{R}$, we have
 	\begin{align}\label{eqn: X^{s,b} time parametor}
 	\Vert \eta(t/\delta) u \Vert_{X^{s,b'}}\lesssim \delta^{b-b'}\Vert u \Vert_{X^{s,b}}.
 	\end{align}
 \end{lemma}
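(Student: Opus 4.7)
This is the standard $X^{s,b}$ time-localization estimate. I plan to proceed in two main steps.

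\emph{Step 1 (reduction to a scalar inequality).} I would first conjugate away the dispersion by introducing $v(t,x) := (e^{it\partial_x^4}u)(t,x)$. A direct calculation using $\widehat{\partial_x^4 u}(\xi)=\xi^4\hat u(\xi)$ gives $\tilde v(\tau,\xi)=\tilde u(\tau-\xi^4,\xi)$, so after the change of variables $\sigma=\tau-\xi^4$ in the definition of the $X^{s,b}$ norm one finds
\begin{align*}
\|u\|_{X^{s,b}_{\tau=-\xi^4}} \;=\; \|v\|_{H^b_t H^s_x}.
\end{align*}
Since $\eta(t/\delta)$ depends only on $t$, it commutes both with the conjugation $e^{it\partial_x^4}$ and with the spatial multiplier $\langle\xi\rangle^s$; by Plancherel in $x$ the claim reduces to the one-dimensional inequality
\begin{align*}
\|\eta(t/\delta)\,f\|_{H^{b'}(\mathbb{R})} \;\lesssim\; \delta^{b-b'}\,\|f\|_{H^b(\mathbb{R})},\qquad -\tfrac12<b'<b<\tfrac12.
\end{align*}

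\emph{Step 2 (the scalar inequality).} The principal endpoint estimate, valid for any $0<\sigma<1/2$, is
\begin{align*}
\|\eta(t/\delta)\,f\|_{L^2} \;\leq\; \|\eta(t/\delta)\|_{L^{1/\sigma}}\,\|f\|_{L^{2/(1-2\sigma)}} \;\lesssim\; \delta^{\sigma}\,\|f\|_{H^\sigma},
\end{align*}
obtained from H\"older's inequality and the one-dimensional Sobolev embedding $H^\sigma(\mathbb{R})\hookrightarrow L^{2/(1-2\sigma)}(\mathbb{R})$. Writing $T_\delta f := \eta(t/\delta) f$, this says $T_\delta:H^\sigma\to L^2$ with operator norm $\lesssim\delta^\sigma$; dualizing gives the symmetric bound $T_\delta:L^2\to H^{-\sigma}$ with the same norm. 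Combining these with the trivial estimate $T_\delta:L^2\to L^2$ and invoking complex interpolation on the analytic family $\{T_\delta\}$ viewed as operators between Sobolev scales produces $T_\delta:H^b\to H^{b'}$ with norm $\lesssim\delta^{b-b'}$ throughout the full range $-\tfrac12<b'<b<\tfrac12$. (Equivalently, one can decompose $f$ via a temporal Littlewood--Paley projection and split into the regimes $\langle\tau\rangle\lesssim 1/\delta$ and $\langle\tau\rangle\gtrsim 1/\delta$, arriving at the same bound piece by piece.)

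\emph{Main obstacle.} The delicate point is to arrange the interpolation so as to extract precisely the sharp exponent $\delta^{b-b'}$ and to keep all implicit constants uniform in $\delta\in(0,1]$. The strict inequalities $|b|,|b'|<1/2$ are essential: at the endpoints $\pm 1/2$ the multiplier bounds for $\eta(t/\delta)$ degenerate (one picks up logarithmic losses coming from the marginal $H^{1/2}$ regularity of the cutoff), which is why the lemma cannot be extended to those borderline indices without additional structure.
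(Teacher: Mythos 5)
The paper does not prove this lemma at all --- it simply cites Tao's book --- so there is nothing to compare against except the standard argument, and your proposal is essentially that standard argument: conjugate by $e^{it\partial_x^4}$ and use Plancherel in $x$ to reduce to the one-dimensional bound $\Vert \eta(t/\delta) f\Vert_{H^{b'}_t}\lesssim \delta^{b-b'}\Vert f\Vert_{H^b_t}$, prove the endpoint $H^{\sigma}\to L^2$ by H\"older plus Sobolev embedding, and get the negative side by duality. Those steps are all correct.

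The one place you are loose is the gluing step. Complex interpolation among only the three estimates you list ($H^{\sigma}\to L^2$ with norm $\delta^{\sigma}$, its dual $L^2\to H^{-\sigma}$, and $L^2\to L^2$) does not by itself reach every pair in the range: interpolating the first two moves the domain and the target simultaneously, so it produces $H^{(1-\theta)\sigma}\to H^{-\theta\sigma}$ rather than an arbitrary $H^{b}\to H^{b'}$, and the regime $0<b'<b$ is not touched at all by those three bounds. The standard fixes are (i) for $b\ge 0\ge b'$, factor $\eta=\eta_1\eta_2$ and compose the $(b,0)$ and $(0,b')$ estimates, and (ii) for $0\le b'<b$, interpolate $H^b\to L^2$ (norm $\delta^b$) against the uniform multiplier bound $H^b\to H^b$ (norm $O(1)$, which holds for $|b|<1/2$ uniformly in $\delta\le 1$ but itself needs a short argument, e.g.\ a fractional Leibniz rule). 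Your parenthetical alternative --- a temporal Littlewood--Paley splitting into $\langle\tau\rangle\lesssim 1/\delta$ and $\langle\tau\rangle\gtrsim 1/\delta$ --- is a clean way to avoid all of this and handles every case directly, so I would promote that from a parenthesis to the actual proof, or else spell out the composition/uniform-bound ingredients explicitly.
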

 For the proof of this lemma, see \cite{Taobook}.
 
 \subsection{Trilinear estimate}
 To prove Theorem \ref{thm: local well-posedness}, the last thing we need is the following trilinear estimate.
 \begin{prop}\label{prop:t1}
 	Let $-\frac{1}{2}\leq s $, $\frac{1}{2}<b\leq\frac{7}{8}$.
 	Then, for each time localized function $u_j$, we have
 	\begin{align}\label{eqn:trilinear estimate}
 	\| u_1 \overline{u_2}u_3\|_{X^{s,b-1}_{\tau=-\xi^4}}\leq \|u_1 \|_{X^{s,\frac{1}{2}+}_{\tau=-\xi^4}}\|u_2 \|_{X^{s,\frac{1}{2}+}_{\tau=-\xi^4}}\|u_3 \|_{X^{s,\frac{1}{2}+}_{\tau=-\xi^4}}.
 	\end{align}	
 \end{prop}
 Combining \eqref{eqn:X^{s,b} energy estimate},\eqref{eqn: X^{s,b} time parametor} and \eqref{eqn:trilinear estimate}, one can prove that the operator
 \begin{align}\label{Duha}
 \Phi(u)(t):=\eta(t)e^{it\partial_x^4}u_0\mp\eta(t)\int_0^te^{i(t-t')\partial_x^4}\eta_{\delta}(t')F(\eta u )(t')  \,dt'
 \end{align}
 is a contraction mapping on a ball of $X^{s,b}$ space
 \begin{align*}
 \mathcal{B}=\left\{u\in X^{s,b}: \Vert u \Vert_{X^{s,b}}\leq 2R   \right\}
 \end{align*}
 for $R>0$ and $\Vert u_0 \Vert_{H^s_x}\leq R $. Since our proof is via the contraction principle, we also obtain that the solution map is locally Lipschitz continuous. Therefore, it suffices to prove the trilinear estimate $\eqref{eqn:trilinear estimate}$. Before we give the proof of the trilinear estimate $\eqref{eqn:trilinear estimate}$, let us remark a example that the trilinear estimate fails in the $X^{s,b}$ space for $s< -1/2$. 
 
 \begin{remark}\label{rem: trilienar estimate fail remark.}
 	We present the example that for $s<-1/2$, the trilinear estimate fails 
 	\begin{align*}
 	\| u_1 \overline{u_2}u_3\|_{X^{s,b-1}_{\tau=-\xi^4}}\leq \|u_1 \|_{X^{s,\frac{1}{2}+}_{\tau=-\xi^4}}\|u_2 \|_{X^{s,\frac{1}{2}+}_{\tau=-\xi^4}}\|u_3 \|_{X^{s,\frac{1}{2}+}_{\tau=-\xi^4}}.	
 	\end{align*}
 	In particualr, the nonlinear interaction high $\times$ high $\times$ high $\to$ high is a sources that makes a trouble. We follow the argument presented in Kenig-Ponce-Vega \cite{KPV1996}.
 	For a fixed large $N$, set $A=\left\{(\tau,\xi)\in \mathbb{R}^2: \vert \tau-\xi^4 \vert\leq 1  , N\leq \xi \leq N+\frac{1}{N}  \right\}$ and $B=-A=\left\{(\tau,\xi)\in \mathbb{R}^2 : (-\tau,-\xi)\in A   \right\}.$ We define functions $u,v$ and $w$ by Fourier transform
 	\begin{align*} 
 	\widehat{u}(\tau,\xi)=1_A(\tau,\xi),\\
 	\widehat{v}(\tau,\xi)=1_B(\tau,\xi),\\
 	\intertext{and}
 	\widehat{w}(\tau,\xi)=1_A(\tau,\xi).
 	\end{align*}
 	Note that $\Vert \widehat{u} \Vert_{L^2_{\tau,\xi}}\approx N^{-\frac{1}{2}},\Vert \widehat{v} \Vert_{L^2_{\tau,\xi}}\approx N^{-\frac{1}{2}} $ and $\Vert \widehat{w} \Vert_{L^2_{\tau,\xi}}\approx N^{-\frac{1}{2}}$. Moreover by the  definiton of the convolution we have
 	\begin{align*}
 	\widehat{u}*\widehat{v}(\tau,\xi)\approx \frac{1}{N}1_R,
 	\end{align*}
 	where $R=\left\{(\tau,\xi): \vert \tau-4N^3\xi \vert\lesssim 1, \vert \xi \vert \leq \frac{1}{N}  \right\}$ is a low frequency region. Hence the high $\times$ high interaction gives us a low frequency localized function $\widehat{u}*\widehat{v}$.
 	We also have
 	\begin{align*}
 	\left(\widehat{u}*\widehat{v}\right)*\widehat{w}\approx \frac{1}{N^2}1_A.
 	\end{align*}
 	Note that on the sets $A,B,R$, $\langle \tau-\xi^4 \rangle\approx 1$ and hence we have 
 	\begin{align*}
 	&\Vert \langle \xi \rangle^s \langle \tau-\xi^4 \rangle^{b-1} \left( \widehat{u}*\widehat{v}*\widehat{w} \right)(\tau,\xi)  \Vert_{L^2_{\tau,\xi}}\approx N^s N^{-2} N^{-\frac{1}{2}},\\
 	&\Vert \langle \xi \rangle^s \langle \tau -\xi^4 \rangle^{b} \widehat{u}(\tau,\xi) \Vert_{L^2_{\tau,\xi}}\approx N^s N^{-\frac{1}{2}},\\
 	&\Vert \langle \xi \rangle^s \langle \tau -\xi^4 \rangle^{b} \widehat{v}(\tau,\xi) \Vert_{L^2_{\tau,\xi}}\approx N^s N^{-\frac{1}{2}},\\
 	&\Vert \langle \xi \rangle^s \langle \tau -\xi^4 \rangle^{b} \widehat{w}(\tau,\xi) \Vert_{L^2_{\tau,\xi}}\approx N^s N^{-\frac{1}{2}}.
 	\end{align*}
 	Hence, in order for the trilinear estimate to hold, we need to have 
 	\begin{align}\label{eqn:counter example trilinear estimate}
 	N^sN^{-2}N^{-\frac{1}{2}}\lesssim \left(N^{-\frac{1}{2}} N^s \right)^3.
 	\end{align}
 	Since $N$ is very large, $\eqref{eqn:counter example trilinear estimate}$ is possible only if $s\geq -1/2.$ That is, there is a counter example of the trilinear estimate when the regularity $s$ is less than $-\frac{1}{2}$.
 \end{remark}

 Now, we give the proof of Proposition \ref{prop:t1} . Before proceeding further, we simplify some of the notations. Let us suppress the smooth time cut-off function $\eta $ from $\eta u_j$ and simply denote them by $u_j$.
 
 \begin{proof}[Proof of Proposition \ref{prop:t1} ]	
 	We may asuume $-1/2\leq s <0$. By duality and Plancherel theorem, 
 	\begin{align*}
 	\| u_1 \overline{u_2}u_3\|_{X^{s,b-1}_{\tau=-\xi^4}}=&\sup\limits_{\| v\|_{X^{-s,1-b}_{\tau=-\xi^4}}=1}\left| \int_{\mathbb{R}\times\mathbb{R}}u_1 \overline{u_2} u_3 \overline{v}\,dxdt \right|\\
 	=&\sup\limits_{\| v\|_{X^{-s,1-b}_{\tau=-\xi^4}}=1}\left| \int_{\substack {\xi_1+\dots \xi_4=0\\ \tau_1+\dots \tau_4=0}} \widehat{u_1}\left(\tau_1,\xi_1 \right) \widehat{\overline{u_2}}\left(\tau_2,\xi_2 \right)\widehat{u_3}\left(\tau_3,\xi_3\right)
 	\widehat{\overline{v}}\left(\tau_4,\xi_4 \right)\right|
 	\end{align*}
 	Since $u_1,u_2$ and $u_3$ are functions localized at time $\vert t \vert \lesssim 1$, we may assume that $v$ is also the function localized at time $ \vert t \vert \lesssim 1$.	 
 	Now set
 	\begin{align*}
 	&f_1\left(\tau_1,\xi_1\right)=\left| \widehat{u_1}\left(\tau_1,\xi_1\right)\right| \left\langle \xi_1\right\rangle^s \left \langle \tau_1+\xi_1^4 \right\rangle^{\frac{1}{2}+},\\
 	&f_2\left(\tau_2,\xi_2\right)=\left| \widehat{\overline{u_2}}\left(\tau_2,\xi_2\right)\right| \left\langle \xi_2\right\rangle^s \left \langle \tau_2-\xi_2^4 \right\rangle^{\frac{1}{2}+},\\ 
 	&f_3\left(\tau_3,\xi_3\right)=\left| \widehat{u_3}\left(\tau_3,\xi_3\right)\right| \left\langle \xi_3\right\rangle^s \left \langle \tau_3+\xi_3^4 \right\rangle^{\frac{1}{2}+},\\
 	\intertext{and}
 	&f_4\left(\tau_4,\xi_4\right)=\left| \widehat{\overline{v}}\left(\tau_4,\xi_4\right)\right| \left\langle \xi_4\right\rangle^{-s} \left \langle \tau_4-\xi_4^4 \right\rangle^{1-b}.   
 	\end{align*}	
 	Hence, we need to show that for nonnegative $L^2$ functions $f_j$, we have  
 	\begin{equation}\label{eqn:multilinear estimate in trilinear}
 	\begin{split}
 	\int\limits_{\Gamma_4}\frac{\left\langle \xi_4 \right\rangle^{s}\left\langle \tau_4-\xi_4^4 \right\rangle^{b-1}}{\left\langle \xi_1 \right\rangle^s \left\langle \xi_2\right\rangle^s\left\langle \xi_3\right\rangle^{s}\left\langle \tau_1+\xi_1^4 \right\rangle^{\frac{1}{2}+}\left\langle \tau_2-\xi_2^4 \right\rangle^{\frac{1}{2}+}\left\langle \tau_3+\xi_3^4 \right\rangle^{\frac{1}{2}+}}&\prod\limits_{j=1}^{4}f_j\left(\tau_j,\xi_j\right) \\
 	\leq &\prod\limits_{j=1}^{4}\|f_j \|_{L^2_{\tau,\xi}}, 
 	\end{split}
 	\end{equation}
 	where $\Gamma_4=\left\{ \xi_1+\dots+\xi_4=0,\tau_1+\dots+\tau_4=0   \right\}$ is the hyperplane.
 	Note that the variables $\xi_1$ and $\xi_3$ have symmetry. 
 	We define $|\xi_{\max}|,|\xi_{\text{sub}}|,|\xi_{\text{thd}}|,|\xi_{\min}|$ to be the maximum, second largest, third largest and minimum of $|\xi_1|,|\xi_2|,|\xi_3|,|\xi_4|$ and define the multiplier
 	\begin{align*}
 	m\left(\xi_1,\dots,\xi_4\right)=\frac{\left\langle \xi_4\right\rangle^s}{\left\langle\xi_1 \right\rangle^s \left\langle \xi_2\right\rangle^s \left\langle \xi_3 \right\rangle^s}.    
 	\end{align*} 
 	In order to obtain the trilineaer estimate $\eqref{eqn:trilinear estimate}$, we need to split the domain of integration $\eqref{eqn:multilinear estimate in trilinear}$ in several cases:
 	
 	\smallskip
 	
 	\noindent \textbf{Case 1.}  $ \Omega_1=\left\{  |\xi_1|,\dots,|\xi_4|\lesssim 1 \right\}$.\\    
 	\textbf{Case 2.} $ \Omega_2=\left\{ |\xi_{\max}| \gg 1 \quad \text{and} \quad  |\xi_{\max}|\sim |\xi_{\text{sub}}|\gg |\xi_{\text{thd}}|,|\xi_{\min}| \right\} $ .\\
 	\textbf{Case 3.} $ \Omega_3=\left\{  |\xi_{\max}| \gg 1 \quad \text{and}\quad |\xi_{\max}| \sim |\xi_{\text{sub}}| \sim |\xi_{\text{thd}}| \gg |\xi_{\min}|\right\}$.\\
 	\textbf{Case 4.} $  \Omega_4=\left\{ |\xi_{\max}| \gg 1 \quad \text{and}\quad|\xi_{\max}| \sim |\xi_{\text{sub}}| \sim |\xi_{\text{thd}}| \sim |\xi_{\min}| \right\} $.\\
 	
 	\textbf{Case 1.} $ \Omega_1=\left\{  |\xi_1|,\dots,|\xi_4|\lesssim 1 \right\} $.
 	Here, $P_{\lesssim 1}u_i$ is defined by $ \widehat{P_{\lesssim 1}u_i } \left(\xi\right)  =1_{\vert \xi \vert \lesssim 1 }\widehat{u_i}\left(\xi\right)$. As we mentioned above, we may assume $v$ is localized at time $\vert t \vert\lesssim 1$. Therefore, by using H\"older's inequality, we have
 	\begin{align*}
 	&\left\vert \int_{\mathbb{R}\times\mathbb{R}} P_{\lesssim 1 }u_1 P_{\lesssim 1 }\overline{u_2} P_{\lesssim 1}u_3 P_{\lesssim 1}\overline{v}\,dxdt \right\vert\\ &\lesssim \Vert v \Vert_{L_t^{\frac{4}{3}}L_x^2 } \Vert P_{\lesssim 1} u_1 P_{\lesssim 1 }\overline{u_2} P_{\lesssim 1}u_3 \Vert_{L_t^4L_x^2}\\
 	&\lesssim \Vert v \Vert_{L_t^2L_x^2} \Vert P_{\lesssim 1 } u_1 \Vert_{L_t^8L_x^\infty} \Vert P_{\lesssim 1 }\overline{u_2} \Vert_{L_t^8L_x^\infty } \Vert P_{\lesssim 1} u_3 \Vert_{L_t^\infty L_x^2}\\
 	&\lesssim\Vert v \Vert_{X_{\tau=-\xi^4}^{-s,1-b}} \Vert P_{\lesssim 1 } u_1 \Vert_{L_t^8L_x^\infty} \Vert P_{\lesssim 1 }\overline{u_2} \Vert_{L_t^8L_x^\infty } \Vert P_{\lesssim 1} u_3 \Vert_{L_t^\infty L_x^2}.  
 	\end{align*} 	
 	Note that from the Strichartz estimates \eqref{eqn: L_t^8L_x^infty X^{s,b 
 		}Strichartz}, \eqref{eqn:energy strichartz} and using the low frequency range $\vert \xi \vert \lesssim 1$, we have 
 	\begin{align*}
 	&\left\| P_{\lesssim 1}u_1 \right\|_{L_t^8L_x^{\infty}}\lesssim \left\| P_{\lesssim 1}u_1 \right\|_{X^{0,\frac{1}{2}+}_{\tau=-\xi^4}}\lesssim  \Vert u_1 \Vert_{X^{s,\frac{1}{2}+}_{\tau=-\xi^4}},\\ 
 	&\left\| P_{\lesssim 1}\overline{u_2} \right\|_{L_t^8L_x^{\infty}}\lesssim \left\| P_{\lesssim 1}\overline{u_2} \right\|_{X^{0,\frac{1}{2}+}_{\tau=\xi^4}}\lesssim  \Vert u_2 \Vert_{X^{s,\frac{1}{2}+}_{\tau=-\xi^4}},\\
 	& \Vert P_{\lesssim 1 } u_3 \Vert_{L_t^\infty L_x^2}\lesssim \Vert P_{\lesssim 1}u_3 \Vert_{X^{0,\frac{1}{2}+}_{\tau=-\xi^4}}\lesssim \Vert u_3 \Vert_{X^{s,\frac{1}{2}+}_{\tau=-\xi^4}}.
 	\end{align*}
 	In this case, the trilinear esimtae holds for all negative $s<0$.

 	\textbf{Case 2.}
 	$ \Omega_2=\left\{ |\xi_{\max}| \geq 1 \quad \text{and} \quad  |\xi_{\max}|\sim |\xi_{\text{sub}}|\gg |\xi_{\text{thd}}|,|\xi_{\min}| \right\} $.  We split the set $\Omega_2$ into the following subsets:\\
 	
 	\noindent \textbf{Subcase 2.a} $ \Omega_{2.a}= \left\{ \vert \xi_{\max} \vert \gg 1 \; \text{and}\; |\xi_2|\sim |\xi_4|\gg |\xi_1|,|\xi_3| \right\}$ . \\
 	\textbf{Subcase 2.b} $ \Omega_{2.b}=\left\{\vert \xi_{\max} \vert \gg 1 \; \text{and} \; |\xi_2|\sim|\xi_3| \gg |\xi_1|,|\xi_4| \right\}$
 	
 	\hphantom{XXXXXXXXX}or $\left\{ \vert \xi_{\max} \vert\gg 1 \; \text{and} \;  |\xi_2|\sim  |\xi_1| \gg |\xi_3|,|\xi_4| \right\}$.\\
 	\textbf{Subcase 2.c}  $ \Omega_{2.c}= \left\{\vert \xi_{\max} \vert \gg 1 \;\text{and}\; |\xi_3|\sim  |\xi_4| \gg |\xi_1|, |\xi_2| \right\}$
 	
 	\hphantom{XXXXXXXXX}or $ \left\{\vert \xi_{\max} \vert \gg1 \; \text{and}\; |\xi_1|\sim  |\xi_4| \gg |\xi_3|, |\xi_2| \right\}$.\\ 
 	\textbf{Subcase 2.d} $\Omega_{2.d}=\left\{\vert \xi_{\max} \vert \gg1 \;\text{and}\; |\xi_3|\sim  |\xi_1| \gg |\xi_2|, |\xi_4| \right\}$.\\
 	
 	\textbf{Subcase 2.a} $ \Omega_{2.a}= \left\{ \vert \xi_{\max} \vert \gg 1 \; \text{and}\; |\xi_2|\sim |\xi_4|\gg |\xi_1|,|\xi_3| \right\}$.
 	Note that on $\Omega_{2.a}$ we have
 	\begin{align*}
 	m\left(\xi_1,\xi_2,\xi_3,\xi_4\right)\lesssim \left \langle  \xi_1 \right\rangle^{\frac{1}{2}} \left\langle  \xi_3 \right\rangle^{\frac{1}{2}}.
 	\end{align*}
 	On $\left\{ \vert \xi_1 \vert, \vert \xi_3 \vert \gg 1   \right\}$ there is no difference between $ \vert \xi_i \vert^{\frac{1}{2}} \; \text{and} \; \left\langle \xi_i \right\rangle^{\frac{1}{2}},\; i=1,3 $. This means that on the high frequency region there is no difference between homogeneous derivative and inhomogeneous derivative. We consider the region 
 	\begin{align*}
 	&\Omega_{2.a,high,high}=\Omega_{2.a}\cap \left\{\vert \xi_1 \vert,\vert \xi_3 \vert \gg 1  \right\},\\
 	&\Omega_{2.a,high,low}=\Omega_{2.a}\cap \left\{\vert \xi_1\vert \gg 1, \vert \xi_3 \vert \lesssim 1  \right\} , \\
 	& \Omega_{2.a,low,high}=\Omega_{2.a}\cap \left\{\vert \xi_1\vert \lesssim 1, \vert \xi_3 \vert \gg 1  \right\},\\
 	&\Omega_{2.a,low,low}=\Omega_{2.a}\cap \left\{\vert \xi_1\vert , \vert \xi_3 \vert \lesssim 1  \right\}.
 	\end{align*}
 	Then, on $\Omega_{2.a,high,high}$, from the Strichartz estimates \eqref{eqn:Strichartz X^{s,b} 1/2} and \eqref{eqn:energy strichartz}, we have
 	\begin{align*}
 	&\int\limits_{\Gamma_4\cap \Omega_{2.a,high,high}}\frac{\left\langle \xi_4 \right\rangle^{s}\left\langle \tau_4-\xi_4^4 \right\rangle^{b-1}}{\left\langle \xi_1 \right\rangle^s \left\langle \xi_2\right\rangle^s\left\langle \xi_3\right\rangle^{s}\left\langle \tau_1+\xi_1^4 \right\rangle^{\frac{1}{2}+}\left\langle \tau_2-\xi_2^4 \right\rangle^{\frac{1}{2}+}\left\langle \tau_3+\xi_3^4 \right\rangle^{\frac{1}{2}+}}\prod\limits_{j=1}^{4}f_j\left(\tau_j,\xi_j\right)\\
 	&\lesssim 
 	\left\|D^{\frac{1}{2}}\mathcal{F}^{-1}\left(\frac{f_1}{\left\langle \tau_1+\xi_1^4 \right\rangle^{\frac{1}{2}+}} \right) \right\|_{L_t^4L_x^{\infty}}
 	\left \| \mathcal{F}^{-1}\left(\frac{f_2}{\left\langle \tau_2-\xi_2^4 \right\rangle^{\frac{1}{2}+}} \right) \right\|_{L_t^{\infty}L_x^2}\\
 	&\hphantom{XXXXXXXXX}\times
 	\left \| D^{\frac{1}{2}}\mathcal{F}^{-1}\left(\frac{f_3}{\left\langle \tau_3+\xi_3^4 \right\rangle^{\frac{1}{2}+}} \right) \right\|_{L_t^4L_x^{\infty}}
 	\left\| f_4\right\|_{L_{\tau,\xi}^2}\\
 	&\lesssim  \left\| u_1\right\|_{X^{s,\frac{1}{2}+}}\left\| u_2\right\|_{X^{s,\frac{1}{2}+}}\left\| u_3\right\|_{X^{s,\frac{1}{2}+}}.
 	\end{align*}
 	On $\Omega_{2.a,high,low}=\Omega_{2.a}\cap \left\{\vert \xi_1\vert \gg 1, \vert \xi_3 \vert \lesssim 1  \right\}$, we consider the integral
 	\begin{align}\label{eqn:omega high low}
 	\left\vert    \int_{\mathbb{R}\times \mathbb{R}} P_{\gg 1}u_1 P_{\gg 1 }\overline{u_2} P_{\lesssim 1}u_3 P_{\gg 1 }\overline{v} \,dxdt    \right\vert,
 	\end{align}
 	where $ \widehat {P_{\gg 1 }u_i}=1_{\vert \xi \vert \gg 1 }\widehat{u_i}$ and $ \widehat {P_{\lesssim 1 }u_i}=1_{\vert \xi \vert \lesssim 1 }\widehat{u_i}$. Thefore, by using H\"older's inequality, we have  
 	\begin{align*}
 	\eqref{eqn:omega high low}\lesssim& \Vert v \Vert_{L_t^2L_x^2} \Vert P_{\gg 1}u_1 P_{\gg 1}\overline{u_2} P_{\lesssim 1}u_3 \Vert_{L_t^2L_x^2}\\
 	\lesssim& \Vert v \Vert_{X^{-s,1-b}} \Vert P_{\gg 1}u_1 P_{\gg 1}\overline{u_2} P_{\lesssim 1}u_3 \Vert_{L_t^2L_x^2}.
 	\end{align*}  
 	Note that 
 	\begin{align*}
 	&\left\vert    \mathcal{F}\left(P_{\gg1 }u_1  P_{\gg 1}\overline{u_2}  P_{\lesssim 1}u_3 \right)(\xi)    \right\vert\\   
 	&\lesssim \left \vert \int_{\xi_1+\xi_2+\xi_3=\xi}\widehat{P_{\gg 1}u_1 }(\xi_1)  \widehat{P_{\gg 1}\overline{u_2} }(\xi_2)  \widehat{P_{\lesssim 1}u_3 }(\xi_3) \right\vert\\
 	&\lesssim \left( \vert \xi\vert^{\frac{1}{2}}1_{ \vert \xi \vert \gg1}\langle \xi \rangle^{s}\vert \widehat{u_1} \vert \right) * \left(  \vert \xi\vert^{\frac{1}{2}}1_{\vert \xi \vert  \gg 1}\langle \xi \rangle^{s}\vert \widehat{\overline{u_2}} \vert\right) * \left( 1_{\vert \xi \vert \lesssim 1} \vert \widehat{u_3}\vert \right)(\xi).
 	\end{align*}
 	Therefore, from Strichartz estimates $\eqref{eqn:Strichartz X^{s,b} 1/2}, \eqref{eqn:energy strichartz}$ and using the low frequency range $\vert \xi \vert \lesssim 1$, we have 
 	\begin{align*}
 	&\Vert P_{\gg 1}u_1 P_{\gg 1}\overline{u_2} P_{\lesssim 1}u_3 \Vert_{L_t^2L_x^2}
 	\\ \lesssim &  \Vert D^{\frac{1}{2}} P_{\gg 1} \langle D \rangle^s \mathcal{F}^{-1}\vert \widehat{u_1} \vert  \vert_{L_t^4L_x^\infty} \Vert D^{\frac{1}{2}} P_{\gg 1} \langle D \rangle^s \mathcal{F}^{-1}\vert \widehat{\overline{u_2}} \vert  \vert_{L_t^4L_x^\infty}  \Vert P_{\lesssim 1} \mathcal{F}^{-1} \vert \widehat{u_3} \vert \Vert_{L_t^\infty L_x^2}\\
 	\lesssim & \left\| u_1\right\|_{X^{s,\frac{1}{2}+}}\left\| u_2\right\|_{X^{s,\frac{1}{2}+}}\left\| u_3\right\|_{X^{s,\frac{1}{2}+}}.
 	\end{align*}
 	For the case $\Omega_{2.a,low,high}$, it is essentially the same as $\Omega_{2.a,high,low}$ by symmetry.
 	
 	On $\Omega_{2.a,low,low}=\Omega_{2.a}\cap \left\{\vert \xi_1\vert , \vert \xi_3 \vert \lesssim 1  \right\}$, we consider the integral
 	\begin{align}\label{eqn:omega low low}
 	\left\vert    \int_{\mathbb{R}\times \mathbb{R}} P_{\lesssim 1}u_1 P_{\gg 1 }\overline{u_2} P_{\lesssim 1}u_3 P_{\gg 1 }\overline{v} \,dxdt    \right\vert.
 	\end{align} 
 	Recall that $v$ is time localized function. Hence, by using H\"older's inequality, we have
 	\begin{align*}
 	\eqref{eqn:omega low low} \lesssim& \Vert v \Vert_{L_t^{\frac{8}{5}} L_x^2  } \Vert P_{\lesssim 1} u_1 P_{\gg 1 }\overline{u_2} P_{\lesssim 1} u_3 \Vert_{L_t^{\frac{8}{3}} L_x^2 }\\
 	\lesssim& \Vert v \Vert_{L_t^2L_x^2} \Vert P_{\lesssim 1} u_1 P_{\gg 1 }\overline{u_2} P_{\lesssim 1} u_3 \Vert_{L_t^{\frac{8}{3}} L_x^2 }\\
 	\lesssim&\Vert v \Vert_{X^{-s,1-b}} \Vert P_{\lesssim 1} u_1 P_{\gg 1 }\overline{u_2} P_{\lesssim 1} u_3 \Vert_{L_t^{\frac{8}{3}} L_x^2 }.
 	\end{align*}	  
 	Note that 
 	\begin{align*}
 	\left\vert    \mathcal{F}\left(P_{\lesssim 1 }u_1  P_{\gg 1}\overline{u_2}  P_{\lesssim 1}u_3 \right) (\xi)   \right\vert   &\lesssim \left \vert \int_{\xi_1+\xi_2+\xi_3=\xi}\widehat{P_{\lesssim 1}u_1 }(\xi_1)  \widehat{P_{\gg 1}\overline{u_2} }(\xi_2)  \widehat{P_{\lesssim 1}u_3 }(\xi_3) \right\vert\\
 	&\lesssim \left( 1_{ \vert \xi \vert \lesssim 1} \vert \widehat{u_1} \vert \right) * \left(  \vert \xi\vert^{\frac{1}{2}}1_{\vert \xi \vert  \gg 1}\langle \xi \rangle^{s}\vert \widehat{\overline{u_2}} \vert\right) * \left( 1_{\vert \xi \vert \lesssim 1} \vert \widehat{u_3}\vert \right)(\xi).
 	\end{align*}	
 	Therefore, from Strichartz estimates $\eqref{eqn:Strichartz X^{s,b} 1/2}, \eqref{eqn:energy strichartz}$ and using the low frequency range $\vert \xi \vert \lesssim 1$, we have
 	\begin{align*}
 	&\Vert P_{\lesssim 1} u_1 P_{\gg 1 }\overline{u_2} P_{\lesssim 1} u_3 \Vert_{L_t^{\frac{8}{3}} L_x^2 }\\
 	\lesssim& \Vert  P_{\lesssim 1}  \mathcal{F}^{-1}\vert \widehat{u_1} \vert  \vert_{L_t^8L_x^\infty} \Vert D^{\frac{1}{2}} P_{\gg 1} \langle D \rangle^s \mathcal{F}^{-1}\vert \widehat{\overline{u_2}} \vert  \vert_{L_t^4L_x^\infty}  \Vert P_{\lesssim 1} \mathcal{F}^{-1} \vert \widehat{u_3} \vert \Vert_{L_t^\infty L_x^2}\\
 	\lesssim& \left\| u_1\right\|_{X^{s,\frac{1}{2}+}}\left\| u_2\right\|_{X^{s,\frac{1}{2}+}}\left\| u_3\right\|_{X^{s,\frac{1}{2}+}}.\\
 	\end{align*}	
 	
 	\textbf{Subcase 2.b} $ \Omega_{2.b}=\left\{\vert \xi_{\max} \vert \gg 1 \; \text{and} \; |\xi_2|\sim|\xi_3| \gg |\xi_1|,|\xi_4| \right\}$ or $\{ \vert \xi_{\max} \vert\gg 1 \; \text{and} \;  |\xi_2|\sim  |\xi_1| \gg |\xi_3|,|\xi_4| \}$.
 	By symmetry of $\xi_1$ and $\xi_3$, we may assume 	$ \Omega_{2.b}=\left\{\vert \xi_{\max} \vert \gg 1 \; \text{and} \; |\xi_2|\sim|\xi_3| \gg |\xi_1|,|\xi_4| \right\}$.  
 	We split the set $\Omega_{2.b}$ into the following subsets:
 	\begin{align*}
 	&\Omega_{2.b.i}=\left\{\vert \xi_{\max}\vert \gg 1 \;\text{and}\;  |\xi_2|\sim|\xi_3|\gg |\xi_4| \gtrsim |\xi_1| \right\}\\ 
 	&\Omega_{2.b.ii}=\left\{\vert \xi_{\max}\vert \gg 1 \;\text{and}\;  |\xi_2|\sim|\xi_3|\gg |\xi_1| \gg |\xi_4| \right\}.
 	\end{align*}
 	On $\Omega_{2.b.i}$ the multiplier is estimated by 
 	\begin{align*}
 	m\left(\xi_1,\dots,\xi_4\right)=&\frac{ \left\langle \xi_4\right\rangle^{s}  }{\left\langle \xi_1\right\rangle^{s} \left\langle \xi_2\right\rangle^{s} \left\langle \xi_3\right\rangle^{s} }\\
 	=& \left( \frac{\left\langle \xi_1\right\rangle }{\left\langle \xi_4\right\rangle } \right)^{-s} \frac{1}{\left\langle \xi_2 \right\rangle^s \left\langle \xi_3 \right\rangle^s }\\
 	\lesssim & \left\langle \xi_2 \right\rangle^{\frac{1}{2}} \left\langle \xi_3 \right\rangle^{\frac{1}{2}} \sim \vert \xi_2 \vert^{\frac{1}{2}} \vert \xi_3 \vert^{\frac{1}{2}}.
 	\end{align*}
 	Then, on $\Omega_{2.b.i}$, from Strichartz estimates \eqref{eqn:Strichartz X^{s,b} 1/2} and \eqref{eqn:energy strichartz}, we have
 	\begin{align*}
 	&\int\limits_{\Gamma_4\cap \Omega_{2.b.i}}\frac{\left\langle \xi_4 \right\rangle^{s}\left\langle \tau_4-\xi_4^4 \right\rangle^{b-1}}{\left\langle \xi_1 \right\rangle^s \left\langle \xi_2\right\rangle^s\left\langle \xi_3\right\rangle^{s}\left\langle \tau_1+\xi_1^4 \right\rangle^{\frac{1}{2}+}\left\langle \tau_2-\xi_2^4 \right\rangle^{\frac{1}{2}+}\left\langle \tau_3+\xi_3^4 \right\rangle^{\frac{1}{2}+}}\prod\limits_{j=1}^{4}f_j\left(\tau_j,\xi_j\right)\\
 	\lesssim& 
 	\left\| \mathcal{F}^{-1}\left(\frac{f_1}{\left\langle \tau_1+\xi_1^4 \right\rangle^{\frac{1}{2}+}} \right) \right\|_{L_t^\infty L_x^{2}}
 	\left \| D^{\frac{1}{2}}\mathcal{F}^{-1}\left(\frac{f_2}{\left\langle \tau_2-\xi_2^4 \right\rangle^{\frac{1}{2}+}} \right) \right\|_{L_t^{4}L_x^\infty}\\
 	&\hphantom{XXX} \times \left \| D^{\frac{1}{2}}\mathcal{F}^{-1}\left(\frac{f_3}{\left\langle \tau_3+\xi_3^4 \right\rangle^{\frac{1}{2}+}} \right) \right\|_{L_t^4L_x^{\infty}}
 	\left\| f_4\right\|_{L_{\tau,\xi}^2}\\
 	\lesssim & \left\| u_1\right\|_{X^{s,\frac{1}{2}+}}\left\| u_2\right\|_{X^{s,\frac{1}{2}+}}\left\| u_3\right\|_{X^{s,\frac{1}{2}+}}.
 	\end{align*}
 	
 	On $\Omega_{2.b.ii}$, we need to observe that the interaction is nonresonant. More precisely, either the output or at least one of the inputs should have lare modulation. Note that on the hyperplane $\left\{\xi_1+\dots+\xi_4=0, \tau_1+\dots+\tau_4=0 \right\}$, we have
 	\begin{equation}
 	\begin{split}
 	\label{eqn: resonance function} 
 	&\vert \tau_1+\xi_1^4 \vert +\vert \tau_2-\xi_2^4\vert+\vert \tau_3+\xi_3^4\vert+\vert\tau_4-\xi_4^4\vert\\
 	&\gtrsim \left\vert \left(\xi_1+\xi_2\right)\left(\xi_2+\xi_3\right)\left(\xi_1^2+\xi_2^2+\xi_3^2+\left(\xi_1+\xi_2+\xi_3 \right)^2 +2\left(\xi_1+\xi_3\right)^2\right)\right\vert.
 	\end{split}
 	\end{equation}
 	For the proof of the factorization, see \cite{OT2016}. Therefore on $\Omega_{2.b.ii}$ we have
 	\begin{align*}
 	\max\left(\left|\tau_1+\xi_1^4\right|,\left|\tau_2-\xi_2^4\right|,\left|\tau_3+\xi_3^4\right|,\left|\tau_4-\xi_4^4\right| \right)&\gtrsim \left |\xi_1+\xi_2 \right| \left| \xi_2+\xi_3 \right|   \abs{\xi_{\max}}^2\\
 	&\gtrsim \vert \xi_1 \vert \vert \xi_{\max} \vert^3.    
 	\end{align*}
 	First, we consider the case $\left| \tau_4-\xi_4^4 \right|=\text{ max}\left(\left|\tau_1+\xi_1^4\right|,\left|\tau_2-\xi_2^4\right|,\left|\tau_3+\xi_3^4\right|,\left|\tau_4-\xi_4^4\right|\\ \right)$. For $\frac{1}{2}<b\leq \frac{7}{8}$, the multiplier is estimated by
 	\begin{align*}
 	\frac{m\left(\xi_1,\dots,\xi_4
 		\right)  }{\left\langle\tau_4-\xi_4^4\right\rangle^{1-b}}&\lesssim  \frac{\left\langle \xi_4\right\rangle^{s}  }{\left\langle \xi_1\right\rangle^{s} \left\langle \xi_2\right\rangle^s \left\langle \xi_3\right\rangle^s \left\langle \xi_1 \right\rangle^{4(1-b)}   }\\
 	&\lesssim \left\langle \xi_1 \right\rangle^{\frac{1}{2}} \left\langle \xi_2 \right\rangle^{\frac{1}{2}}\left\langle \xi_3 \right\rangle^{\frac{1}{2}}\frac{1}{\left\langle \xi_1 \right\rangle^{4(1-b)} }\\
 	&\lesssim  \left\langle \xi_2 \right\rangle^{\frac{1}{2}} \left\langle \xi_3 \right\rangle^{\frac{1}{2}} \frac{\left\langle \xi_1 \right\rangle^{\frac{1}{2}} }{\left\langle \xi_1 \right\rangle^{4(1-b)} }\lesssim  \left\vert \xi_2 \right\vert^{\frac{1}{2}} \left\vert \xi_3 \right\vert^{\frac{1}{2}}.
 	\end{align*}
 	Hence, by using Strichartz estimates \eqref{eqn:Strichartz X^{s,b} 1/2} and \eqref{eqn:energy strichartz}, one can proceed as in case $\Omega_{2.b.i}$. 
 	
 	If $\left| \tau_1+\xi_1^4 \right|=\text{max}\left(\left|\tau_1+\xi_1^4\right|,\left|\tau_2-\xi_2^4\right|,\left|\tau_1+\xi_1^4\right|,\left|\tau_4-\xi_4^4\right|\\ \right)$, then we have  
 	\begin{align}\label{J1}
 	\left\langle \tau_4-\xi_4^4 \right\rangle^{1-b} \left\langle \tau_1+\xi_1^4 \right\rangle^{\frac{1}{2}+}\gtrsim \left\langle \tau_4-\xi_4^4 \right\rangle^{\frac{1}{2}+} \left\langle \tau_1+\xi_1^4 \right\rangle^{1-b}.
 	\end{align} 
 	Therefore, by using $\eqref{J1}$, we consider the integral
 	\begin{align}\label{eqn:aaaa}
 	&\int\limits_{\Gamma_4\cap \Omega_{2.b.ii}}\frac{\left\langle \xi_4 \right\rangle^{s}\left\langle \tau_1+\xi_1^4 \right\rangle^{b-1}}{\left\langle \xi_1 \right\rangle^s \left\langle \xi_2\right\rangle^s\left\langle \xi_3\right\rangle^{s}\left\langle \tau_2-\xi_2^4 \right\rangle^{\frac{1}{2}+}\left\langle \tau_3+\xi_3^4 \right\rangle^{\frac{1}{2}+}\left\langle \tau_4-\xi_4^4 \right\rangle^{\frac{1}{2}+}}\prod\limits_{j=1}^{4}f_j\left(\tau_j,\xi_j\right).
 	\end{align}  
 	For $\frac{1}{2}<b\leq \frac{7}{8}$, the multiplier is estimated by
 	\begin{align*}
 	\frac{m\left(\xi_1,\dots,\xi_4
 		\right)  }{\left\langle\tau_1+\xi_1^4\right\rangle^{1-b}}\lesssim & \frac{\left\langle \xi_4\right\rangle^{s}  }{\left\langle \xi_1\right\rangle^{s} \left\langle \xi_2\right\rangle^s \left\langle \xi_3\right\rangle^s \left\langle \xi_1 \right\rangle^{4(1-b)}   }\\
 	\lesssim& \left\langle \xi_1 \right\rangle^{\frac{1}{2}} \left\langle \xi_2 \right\rangle^{\frac{1}{2}}\left\langle \xi_3 \right\rangle^{\frac{1}{2}}\frac{1}{\left\langle \xi_1 \right\rangle^{4(1-b)} }\\
 	\lesssim & \left\langle \xi_2 \right\rangle^{\frac{1}{2}} \left\langle \xi_3 \right\rangle^{\frac{1}{2}} \frac{\left\langle \xi_1 \right\rangle^{\frac{1}{2}} }{\left\langle \xi_1 \right\rangle^{4(1-b)} }\lesssim  \left\vert \xi_2 \right\vert^{\frac{1}{2}} \left\vert \xi_3 \right\vert^{\frac{1}{2}}.
 	\end{align*}
 	Therefore, from Strichartz estimates \eqref{eqn:Strichartz X^{s,b} 1/2} and \eqref{eqn:energy strichartz}, we have
 	\begin{align*}
 	\eqref{eqn:aaaa}
 	&\lesssim 
 	\left\| f_1\right\|_{L_{\tau,\xi}^2}
 	\left \| D^{\frac{1}{2}}\mathcal{F}^{-1}\left(\frac{f_2}{\left\langle \tau_2-\xi_2^4 \right\rangle^{\frac{1}{2}+}} \right) \right\|_{L_t^{4}L_x^\infty}\\
 	&\hphantom{XXXXX}\times \left \| D^{\frac{1}{2}}\mathcal{F}^{-1}\left(\frac{f_3}{\left\langle \tau_3+\xi_3^4 \right\rangle^{\frac{1}{2}+}} \right) \right\|_{L_t^4L_x^{\infty}}
 	\left\| \mathcal{F}^{-1}\left(\frac{f_4}{\left\langle \tau_4+\xi_4^4 \right\rangle^{\frac{1}{2}+}} \right) \right\|_{L_t^\infty L_x^{2}}\\
 	&\lesssim  \left\| u_1\right\|_{X^{s,\frac{1}{2}+}}\left\| u_2\right\|_{X^{s,\frac{1}{2}+}}\left\| u_3\right\|_{X^{s,\frac{1}{2}+}}.
 	\end{align*}
 	If $\left| \tau_3+\xi_3^4 \right|=\text{max}\left(\left|\tau_1+\xi_1^4\right|,\left|\tau_2-\xi_2^4\right|,\left|\tau_1+\xi_1^4\right|,\left|\tau_4-\xi_4^4\right|\\ \right)$, then we have  
 	\begin{align*}
 	\left\langle \tau_4-\xi_4^4 \right\rangle^{1-b} \left\langle \tau_3+\xi_3^4 \right\rangle^{\frac{1}{2}+}\gtrsim \left\langle \tau_4-\xi_4^4 \right\rangle^{\frac{1}{2}+} \left\langle \tau_3+\xi_3^4 \right\rangle^{1-b}.
 	\end{align*}
 	Therefore, we consider the integral
 	\begin{align}\label{bbbbbb} 
 	&\int\limits_{\Gamma_4\cap \Omega_{2.b.ii}}\frac{\left\langle \xi_4 \right\rangle^{s}\left\langle \tau_3+\xi_3^4 \right\rangle^{b-1}}{\left\langle \xi_1 \right\rangle^s \left\langle \xi_2\right\rangle^s\left\langle \xi_3\right\rangle^{s}\left\langle \tau_1+\xi_1^4 \right\rangle^{\frac{1}{2}+}\left\langle \tau_2-\xi_2^4 \right\rangle^{\frac{1}{2}+}\left\langle \tau_4-\xi_4^4 \right\rangle^{\frac{1}{2}+}}\prod\limits_{j=1}^{4}f_j\left(\tau_j,\xi_j\right).	
 	\end{align}	
 	On $\Omega_{2.b.ii}=\left\{\vert \xi_{max}\vert \gg 1 \;\text{and}\;  |\xi_2|\sim|\xi_3|\gg |\xi_1| \gg |\xi_4| \right\}$, we may assume $\vert \xi_1 \vert $ is in the high frequency region $\vert \xi_1 \vert \gg 1$. If $\vert \xi_1 \vert$ is in the low frequency region, we can proceed as in the case $\Omega_{2.a,high,low}$.  
 	
 	For $\frac{1}{2}<b\leq \frac{5}{6}$, the multiplier is estimated by
 	\begin{align*}
 	\frac{m\left(\xi_1,\dots,\xi_4
 		\right)  }{\left\langle\tau_3+\xi_3^4\right\rangle^{1-b}}\lesssim & \frac{\left\langle \xi_4\right\rangle^{s}  }{\left\langle \xi_1\right\rangle^{s} \left\langle \xi_2\right\rangle^s \left\langle \xi_3\right\rangle^s \left\langle \xi_1 \right\rangle^{1-b} \left\langle \xi_3 \right\rangle^{3(1-b)}} \\
 	\lesssim& \left\langle \xi_1 \right\rangle^{\frac{1}{2}} \left\langle \xi_2 \right\rangle^{\frac{1}{2}}\left\langle \xi_3 \right\rangle^{\frac{1}{2}}\frac{1}{\left\langle \xi_1 \right\rangle^{1-b}\left\langle \xi_3 \right\rangle^{3(1-b)} }\\
 	\lesssim& \langle \xi_1 \rangle^{\frac{1}{2}} \left\langle \xi_2 \right\rangle^{\frac{1}{2}}.
 	\end{align*}
 	Therefore, from the Strichartz estimates \eqref{eqn:Strichartz X^{s,b} 1/2} and \eqref{eqn:energy strichartz}, we have
 	\begin{align*}
 	&\eqref{bbbbbb}\\
 	\lesssim& 
 	\left \| D^{\frac{1}{2}}\mathcal{F}^{-1}\left(\frac{f_1}{\left\langle \tau_1+\xi_1^4 \right\rangle^{\frac{1}{2}+}} \right) \right\|_{L_t^{4}L_x^\infty}
 	\left \| D^{\frac{1}{2}}\mathcal{F}^{-1}\left(\frac{f_2}{\left\langle \tau_2-\xi_2^4 \right\rangle^{\frac{1}{2}+}} \right) \right\|_{L_t^{4}L_x^\infty}\\
 	&\hphantom{XXX} \times \left \| \mathcal{F}^{-1}f_3\right\|_{L_t^2L_x^2}
 	\left\| \mathcal{F}^{-1}\left(\frac{f_4}{\left\langle \tau_4+\xi_4^4  \right\rangle^{\frac{1}{2}+}} \right) \right\|_{L_t^\infty L_x^{2}}\\
 	&\lesssim  \left\| u_1\right\|_{X^{s,\frac{1}{2}+}}\left\| u_2\right\|_{X^{s,\frac{1}{2}+}}\left\| u_3\right\|_{X^{s,\frac{1}{2}+}}.
 	\end{align*}
 	For the case $\left| \tau_2-\xi_2^4 \right|=\text{max}\left(\left|\tau_1+\xi_1^4\right|,\left|\tau_2-\xi_2^4\right|,\left|\tau_1+\xi_1^4\right|,\left|\tau_4-\xi_4^4\right|\\ \right)$, we can proceed as in the case $\left| \tau_3+\xi_3^4 \right|=\text{max}\left(\left|\tau_1+\xi_1^4\right|,\left|\tau_2-\xi_2^4\right|,\left|\tau_1+\xi_1^4\right|,\left|\tau_4-\xi_4^4\right|\\ \right)$.
 	
 	\textbf{Subcase 2.c}  $ \Omega_{2.c}= \left\{\vert \xi_{max} \vert \gg 1 \;\text{and}\; |\xi_3|\sim  |\xi_4| \gg |\xi_1|, |\xi_2| \right\}$ or $ \{\vert \xi_{max} \vert \gg1 \; \text{and}\; |\xi_1|\sim  |\xi_4| \gg |\xi_3|, |\xi_2| \}$. By symmetry we may assume $ \Omega_{2.c}= \{\vert \xi_{max} \vert \gg 1 \;\text{and}\; |\xi_3|\sim  |\xi_4| \gg |\xi_1|, |\xi_2| \}$. Note that 
 	\begin{align*}
 	m\left(\xi_1,\xi_2,\xi_3,\xi_4\right)\lesssim \langle \xi_1 \rangle^{\frac{1}{2}} \langle \xi_2 \rangle^{\frac{1}{2}}.
 	\end{align*} 
 	This subcase can be handled as in Subcase 2.a.
 	
 	\textbf{Subcase 2.d} $\Omega_{2.d}=\left\{\vert \xi_{\max} \vert \gg1 \;\text{and}\; |\xi_3|\sim  |\xi_1| \gg |\xi_2|, |\xi_4| \right\}$. As in the case Subcase 2.b, we decompose the set $\Omega_{2.d}$ into the set $\left\{ \vert \xi_3 \vert \sim \vert \xi_1 \vert \gg \vert \xi_4 \vert \gtrsim \vert \xi_2 \vert \right\}$ and $\left\{\vert \xi_3 \vert \sim \vert \xi_1 \vert \gg \vert \xi_2 \vert \gg \vert \xi_4 \vert \right\}$. The remaining part can be proceed as in Subcase 2.b.
 	
 	\textbf{Case3.} $ \Omega_3=\left\{  |\xi_{\max}| \gg 1 \quad \text{and}\quad |\xi_{\max}| \sim |\xi_{\text{sub}}| \sim |\xi_{\text{thd}}| \gg |\xi_{\min}|\right\}$.
 	Note that $|\xi_{\max}| \sim |\xi_{\text{sub}}| \sim |\xi_{\text{thd}}| \gg |\xi_{\min}|$ and $\xi_1+\dots+\xi_4=0$ imply that
 	\begin{align*}	
 	&\vert \xi_1\vert=\vert\xi_{\min}\vert \Rightarrow \vert \xi_1+\xi_2 \vert \vert \xi_2+\xi_3 \vert \vert \xi_2 \vert^2\sim \vert \xi_2 \vert \vert\xi_4 \vert \vert \xi_2 \vert^2 \gtrsim \vert \xi_{\max}\vert^4, \\
 	&\vert \xi_2\vert=\vert\xi_{\min}\vert \Rightarrow \vert \xi_1+\xi_2 \vert \vert \xi_2+\xi_3 \vert \vert \xi_2 \vert^2\sim \vert \xi_1 \vert \vert\xi_3 \vert \vert \xi_2 \vert^2 \gtrsim \vert \xi_{\max}\vert^4, \\	
 	&\vert \xi_3\vert=\vert\xi_{\min}\vert \Rightarrow \vert \xi_1+\xi_2 \vert \vert \xi_2+\xi_3 \vert \vert \xi_2 \vert^2\sim \vert \xi_4 \vert \vert\xi_2 \vert \vert \xi_2 \vert^2 \gtrsim \vert \xi_{\max}\vert^4,\\ 	
 	&\vert \xi_4\vert=\vert\xi_{\min}\vert \Rightarrow \vert \xi_1+\xi_2 \vert \vert \xi_2+\xi_3 \vert \vert \xi_2 \vert^2\sim \vert \xi_3 \vert \vert\xi_1 \vert \vert \xi_2 \vert^2 \gtrsim \vert \xi_{\max}\vert^4. 
 	\end{align*}
 	Therefore, we have
 	\begin{align*}
 	\max\left( |\tau_1+\xi_1^4|,\dots,|\tau_4-\xi_4^4|    \right)\gtrsim& \abs{\xi_{\max}}^4.    
 	\end{align*}
 	We may assume $\vert \tau_4-\xi_4^4 \vert =\max\left( |\tau_1+\xi_1^4|,\dots,|\tau_4-\xi_4^4|    \right)$
 	and hence $\langle \tau_4 -\xi_4^4 \rangle \gtrsim \langle \xi_{\max} \rangle^4 $. The remaining case can be handled as in Subcase 2.b. Hence, we have 
 	\begin{align*}
 	& \int\limits_{\Gamma_4 \cap \Omega_3}\frac{\left\langle \xi_4 \right\rangle^{s}\left\langle \tau_4-\xi_4^4 \right\rangle^{b-1}}{\left\langle \xi_1 \right\rangle^s \left\langle \xi_2\right\rangle^s\left\langle \xi_3\right\rangle^{s}\left\langle \tau_1+\xi_1^4 \right\rangle^{\frac{1}{2}+}\left\langle \tau_2-\xi_2^4 \right\rangle^{\frac{1}{2}+}\left\langle \tau_3+\xi_3^4 \right\rangle^{\frac{1}{2}+}}\prod\limits_{j=1}^{4}f_j\left(\tau_j,\xi_j\right)\\
 	&\lesssim  \int\limits_{\Gamma_4 \cap \Omega_3} \frac{\left\langle \xi_{\max} \right\rangle^{\frac{1}{2}} \left\langle \xi_{\text{sub}} \right\rangle^{\frac{1}{2}}  \left\langle \xi_{\text{thd}} \right\rangle^{\frac{1}{2}}  }{ \left\langle \tau_4-\xi_4^4 \right \rangle^{1-b} \left\langle \tau_1+\xi_1^4 \right\rangle^{\frac{1}{2}+}\left\langle \tau_2-\xi_2^4 \right\rangle^{\frac{1}{2}+}\left\langle \tau_3+\xi_3^4 \right\rangle^{\frac{1}{2}+}}\prod\limits_{j=1}^4 f_j\left(\tau_j,\xi_j\right)\\
 	&\lesssim  \int\limits_{\Gamma_4 \cap \Omega_3} \frac{  \left\langle \xi_{\max} \right\rangle^{\frac{1}{2}}  }{ \left\langle \xi_{\max} \right \rangle^{4(1-b)}} \frac{\vert \xi_{\text{sub}} \vert^{\frac{1}{2}} \vert \xi_{\text{thd}} \vert^{\frac{1}{2}}}{\left\langle \tau_1+\xi_1^4 \right\rangle^{\frac{1}{2}+}\left\langle \tau_2-\xi_2^4 \right\rangle^{\frac{1}{2}+}\left\langle \tau_3+\xi_3^4 \right\rangle^{\frac{1}{2}+}}   \prod\limits_{j=1}^4 f_j\left(\tau_j,\xi_j\right).
 	\end{align*}
 	Therefore, from the Strichartz estimates \eqref{eqn:Strichartz X^{s,b} 1/2} and \eqref{eqn:energy strichartz}, we obtain the desired result.
 	
 	\textbf{Case 4.} $  \Omega_4=\left\{ |\xi_{\max}| \gg 1 \quad \text{and}\quad|\xi_{\max}| \sim |\xi_{\text{sub}}| \sim |\xi_{\text{thd}}| \sim |\xi_{\min}| \right\} $. Note that the multiplier is estimated by
 	\begin{align*}
 	m\left(\xi_1,\xi_2,\xi_3,\xi_4\right)\lesssim \langle \xi_1 \rangle^{\frac{1}{2}} \langle \xi_2 \rangle^{\frac{1}{2}}\lesssim \vert \xi_1 \vert^{\frac{1}{2}} \vert \xi_2 \vert^{\frac{1}{2}}.
 	\end{align*}
 	Therefore, by using Strichartz estimates \eqref{eqn:Strichartz X^{s,b} 1/2} and \eqref{eqn:energy strichartz}, we obtain the desired result.
 \end{proof}

 %--------------------------------------
 
 \section{Global well-posedness on $H^{-\frac{1}{2}}(\mathbb{R})$ and correction term strategy }\label{sec: Global well-posedness and correction term strategy}
 
 We introduce an even, smooth, and monotone multiplier $m$ taking values in $[0,1]$, and
 \begin{align}
 \label{Im}
 m(\xi):=
 \begin{cases}
 1,   & |\xi|<N \\
 \left(\frac{|\xi|}{N}\right)^s,  & |\xi|>2N.
 \end{cases}
 \end{align}
 Here, $N$ is a large parameter to be determined later. We define an operator $I$ by 
 \begin{align*}
 \widehat{Iu}\left(\xi\right):=m\left(\xi\right) \widehat{u}\left( \xi \right).
 \end{align*}
 Note that we have the estimate
 \begin{align*}
 \| u\|_{H^s}\lesssim \| Iu \|_{L^2}\lesssim N^{-s} \| u\|_{H^s}.
 \end{align*}
 
 \noindent
 Thus, the operator $I$ acts as the identity for low frequencies while it maps $H^s$ solutions to $L^2$. The main part is to prove that the increment of the modified energy $\|Iu(t) \|_{L_x^2}$ decays with respect to a large parameter $N$.

 Now we introduce useful multilinear notations. If $n\geq2$ is an even integer, we define a multiplier of order $n$ to be any function $M_n\left(\xi_1\,\dots,\xi_n \right)$ on the hyperplane 
 \begin{align*}
 \Gamma_n:=\left\{\left(\xi_1,\dots,\xi_n\right)\in \mathbb{R}^n: \xi_1+\dots+\xi_n=0 \right\},     
 \end{align*}
 which we endow with the standard measure
 \begin{align}
 \int_{\Gamma_n}h\left(\xi_1,\dots,\xi_n\right)=\int_{\mathbb{R}^{n-1}}h\left(\xi_1,\dots,\xi_{n-1},-\xi_1-\dots -\xi_{n-1}\right)\,d\xi_1\dots d\xi_{n-1}.    
 \end{align}
 We define a multilinear form
 \begin{align}
 \Lambda_n\left(M_n; u_1,\dots,u_n\right):=\int\limits_{\xi_1+\dots+\xi_n=0}M_n\left(\xi_1,\dots,\xi_n \right)\prod\limits_{j=1}^n\widehat{u}_j\left(\xi_j\right).  
 \end{align}
 Usually we apply $\Lambda_n$ on $n$ same functions which are all $u$. We will use the following notation:
 \begin{align*}
 \Lambda \left(M_n\right):&=\Lambda\left(M_n;u,\bar{u},u,\bar{u},\dots,u,\bar{u}\right)\\
 =&\int\limits_{\xi_1+\dots+\xi_n=0}M_n\left(\xi_1,\dots,\xi_n \right)\widehat{u}\left(\xi_1\right)\widehat{\bar{u}}\left(\xi_2\right)\widehat{u}\left(\xi_3\right)\widehat{\bar{u}}\left(\xi_4\right)\dots \widehat{u}\left(\xi_{n-1}\right)\widehat{\bar{u}}\left(\xi_n\right).    
 \end{align*}
 Note that $\Lambda_n\left(M_n\right)$ is invariant under permutations of the even $\xi_j$ indices, or of the odd $\xi_j$ indices.
 We shall often write $\xi_{ij}$ for $\xi_i+\xi_j$,$\xi_{ijk}$ for $\xi_i+\xi_j+\xi_k$. We also write $\xi_{i-j}$ for $\xi_i-\xi_j,\xi_{ij-klm}$ for $\xi_{ij}-\xi_{klm}$.
 
 Define the modified energy $E_I^2\left(t\right)$
 \begin{align}\label{eqn: modified energy E_I^2}
 E_I^2\left(t\right)=\left\| Iu(t)\right\|_{L_x^2}^2.    
 \end{align}
 Note that from the Plancherel's theorem, we obtain
 \begin{align*}
 E_I^2\left(t\right)=&\left\|Iu(t) \right\|_{L_x^2}^2\\
 =&\int_{\xi_1+\xi_2=0}m\left(\xi_1\right)m\left(\xi_2\right)\widehat{u}\left(\xi_1\right)\widehat{\bar{u}}\left(\xi_2\right)\\
 =&\Lambda_2\left( m\left(\xi_1\right) m\left(\xi_2\right);u \right).
 \end{align*}
 By differentiating this, we have
 \begin{align*}
 \frac{d}{dt}E_I^2\left(t\right)=&\int_{\xi_1+\xi_2=0}i\left(\xi_1^4-\xi_2^4\right)m\left(\xi_1\right)m\left(\xi_2\right)\widehat{u}\left(\xi_1\right)\widehat{\bar{u}}\left(\xi_2\right)\\
 +&\int_{\xi_1+\xi_2+\xi_3+\xi_4=0}i\left(m^2\left(\xi_1\right)-m^2\left(\xi_4\right) \right)\widehat{u}\left(\xi_1\right)\widehat{\bar{u}}\left(\xi_2\right)\widehat{u}\left(\xi_3\right)\widehat{\bar{u}}\left(\xi_4\right).
 \end{align*}
 Since $\xi_1+\xi_2=0$, the first term vanishes. By symmetrizing the second term, we have
 \begin{align*}
 \frac{d}{dt}E_I^2\left(t\right)=&\frac{1}{2}\int_{\xi_1+\xi_2+\xi_3+\xi_4=0}i\left(m^2\left(\xi_1\right)-m^2\left(\xi_2\right)+m^2\left(\xi_3\right)-m^2\left(\xi_4\right) \right)\widehat{u}\left(\xi_1\right)\widehat{\bar{u}}\left(\xi_2\right)\widehat{u}\left(\xi_3\right)\widehat{\bar{u}}\left(\xi_4\right)\\
 =&\frac{i}{2}\Lambda_4 \left(m^2\left(\xi_1\right)-m^2\left(\xi_2\right)+m^2\left(\xi_3\right)-m^2\left(\xi_4\right));u \right)
 .
 \end{align*}
 We set
 \begin{align*}
 M_4\left(\xi_1,\dots,\xi_4\right)=m^2\left(\xi_1\right)-m^2\left(\xi_2\right)+m^2\left(\xi_3\right)-m^2\left(\xi_4\right).    
 \end{align*}
 Hence, we obtain 
 \begin{align*}
 \frac{d}{dt}E_I^2\left(t\right)=\frac{i}{2}\Lambda_4\left(M_4\right).   
 \end{align*}
 Define a new modified energy
 \begin{align}\label{eqn : modified energy E_I^4}
 E_I^4\left(t\right)=E_I^2\left(t\right)+\Lambda_4\left(\sigma_4\right),    
 \end{align}
 where the function $\sigma_4$ is symmetric under the even $\xi_j$ indices, or of the odd $\xi_j$ indices. The $\sigma_4$ will be determined later.  
 Observe that
 \begin{align*}
 \frac{d}{dt}\Lambda_4\left(\sigma_4\right)=&\int\limits_{\xi_1+\xi_2+\xi_3+\xi_4=0}\sigma_4\left(\xi_1,\dots,\xi_4\right)i\left(\xi_1^4-\xi_2^4+\xi_3^4-\xi_4^4 \right)\widehat{u}\left(\xi_1\right)\widehat{\bar{u}}\left(\xi_2\right)\widehat{u}\left(\xi_3\right)\widehat{\bar{u}}\left(\xi_4\right) \\
 +&\int\limits_{\xi_1+\xi_2+\xi_3+\xi_4+\xi_5+\xi_6=0}2i\left(\sigma_4\left(\xi_1,\xi_2,\xi_3,\xi_4+\xi_5+\xi_6 \right)- \sigma_4\left(\xi_1,\xi_2,\xi_3+\xi_4+\xi_5,\xi_6 \right) \right)\\
 &\hspace{60mm}\times\widehat{u}\left(\xi_1\right)\widehat{\bar{u}}\left(\xi_2\right)\widehat{u}\left(\xi_3\right)\widehat{\bar{u}}\left(\xi_4\right)\widehat{u}\left(\xi_5\right)\widehat{\bar{u}}\left(\xi_6\right).
 \end{align*}
 By differentiating the new modified energy $E_{I}^4\left(t\right)$, we obtain
 \begin{equation}
 \begin{split}
 \label{cancel}
 &\frac{d}{dt}E_I^4\left(t\right)\\
 & =\Lambda_4\left( M_4 \right)+\Lambda_4 \left( \sigma_4 i\left(\xi_1^4-\xi_2^4+\xi_3^4-\xi_4^4\right)\right)+4\Re\Lambda_6\left( \sigma_4\left(\xi_1,\xi_2,\xi_3,\xi_4+\xi_5+\xi_6 \right) \right).    
 \end{split}
 \end{equation}
 Define $\alpha_n,\sigma_4$ by
 \begin{align}
 \alpha_n:=&i\left(\xi_1^4-\xi_2^4+\dots+(-1)^{n+1}\xi_n^4\right)\label{eqn: n-resonance function}\\
 \intertext{and}
 \sigma_4:=&-\frac{M_4}{i\alpha_4}\label{eqn: cancel correction term}
 \end{align}
 such that the two first terms in (\ref{cancel}) are canceled.
 We define
 \begin{align*}
 M_6\left(\xi_1,\dots,\xi_6\right)=i\sigma_4\left(\xi_1,\xi_2,\xi_3,\xi_4+\xi_5+\xi_6 \right).
 \end{align*}
 Then, we have
 \begin{align*}
 \frac{d}{dt}E_{I}^4\left(t\right)=4\Re\Lambda_6\left(M_6\right).     
 \end{align*}
 We will prove two main properties. One is to show that modified energy $E_I^4(t)$ is almost conserved. The other is to prove that it is close to $E_I^2(t)$. To control the increment  of $E_I^4(t)$, it suffices to estimate its derivative
 \begin{align*}
 \frac{d}{dt}E_{I}^4\left(t\right)=4\Re\Lambda_6\left(M_6\right).     
 \end{align*}
 
 \noindent
 Hence, in the following subsections, we consider the multiplier $M_6$ and multilinear form $\Lambda_6(M_6)$.

 \subsection{Multiplier estimates} 
 In this subsection, we present multiplier estimates.
 If $m$ is of the form (\ref{Im}), then $m^2$ satisfies
 
 \begin{equation}
 \begin{split}
 \begin{cases}
 m^2\left(\xi\right)\sim m^2\left(\xi' \right), \quad |\xi|\sim |\xi'|, \\
 \left(m^2\right)'\left(\xi\right)=O\left(\frac{m^2\left(\xi\right)}{|\xi|} \right),\\
 \left(m^2\right)^{''}\left(\xi\right)=O\left(\frac{m^2\left(\xi\right)}{|\xi|^2} \right).
 \end{cases}
 \end{split}    
 \end{equation} 
 We will use two mean value formulas: if $|\eta|,|\lambda|\ll |\xi|$, then
 \begin{align}\label{eqn: mean value theorem}
 \left| a(\xi+\eta)-a(\xi) \right|\lesssim |\eta|\sup\limits_{|
 	\xi'|\sim |\xi|}\left|a'\left(\xi'\right) \right|,    
 \end{align}
 and
 \begin{align}\label{eqn: double mean value theorem}
 \left| a\left(\xi+\eta+\lambda \right)-a\left(\xi+\eta\right)-a\left(\xi+\lambda\right)+a\left(\xi\right)  \right|\lesssim |\eta||\lambda|\sup\limits_{|\xi'|\sim |\xi|} \left| a^{''}\left(\xi'\right) \right|.    
 \end{align}
 \begin{lemma}\label{lem: multiplier estimate sigma_4}
 	Let $m$ be the multiplier of (\ref{Im}). On the area $|\xi_i|\sim N_i$, we have
 	\begin{align*}
 	\left|\sigma_4\left(\xi_1,\dots,\xi_4 \right) \right|=\left|\frac{M_4}{\alpha_4} \right|\lesssim \frac{m^2\left(min\left(N_i\right)\right)}{\left(N+N_1\right)\left(N+N_2\right)\left(N+N_3\right)\left(N+N_4\right)}.    
 	\end{align*}
 \end{lemma}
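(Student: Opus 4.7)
The approach is to reduce the claim to the simpler pointwise bound
\begin{align*}
|\sigma_4| \lesssim \frac{m^2(N_{(1)})}{N_{(1)}^4}, \qquad N_{(1)} := \max_i N_i.
\end{align*}
This reduction is valid because monotonicity of $m^2$ gives $m^2(N_{(1)}) \leq m^2(\min_i N_i)$, and in every non-trivial frequency configuration each factor $N + N_i$ is at most $2N_{(1)}$, so $\prod_i (N + N_i) \lesssim N_{(1)}^4$. The regime $N_{(1)} \leq N$ is vacuous: there $m\equiv 1$ on all four frequencies and $M_4 = 0$. Otherwise, the constraint $\xi_1+\xi_2+\xi_3+\xi_4=0$ forces the second-largest frequency to satisfy $N_{(2)} \sim N_{(1)} \gtrsim N$.

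\textbf{Factorization of the denominator.} Introduce the change of variables $s = \xi_1 + \xi_3 = -(\xi_2+\xi_4)$, $d = \xi_1 - \xi_3$, $d' = \xi_2 - \xi_4$ on $\Gamma_4$. Using $\xi_1^4 + \xi_3^4 = \tfrac{1}{8}(s^4 + 6s^2 d^2 + d^4)$ and the analogous identity for $\xi_2, \xi_4$, a direct manipulation yields
\begin{align*}
\alpha_4 = -\tfrac{i}{2}(\xi_1+\xi_2)(\xi_2+\xi_3)\bigl(6s^2 + d^2 + (d')^2\bigr).
\end{align*}
Since at least one of $|s|, |d|, |d'|$ must be comparable to $N_{(1)}$, the quadratic factor is $\sim N_{(1)}^2$, giving
\begin{align*}
|\alpha_4| \sim |\xi_{12}|\,|\xi_{23}|\,N_{(1)}^2, \qquad \xi_{12}:=\xi_1+\xi_2,\; \xi_{23}:=\xi_2+\xi_3.
\end{align*}

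\textbf{Estimate on the numerator.} The evenness of $m^2$ together with $\xi_2 = -\xi_1 + \xi_{12}$ and $\xi_4 = -\xi_3 - \xi_{12}$ allow us to rewrite
\begin{align*}
2M_4 = \bigl[m^2(\xi_1) - m^2(\xi_1 - \xi_{12})\bigr] + \bigl[m^2(\xi_3) - m^2(\xi_3 + \xi_{12})\bigr].
\end{align*}
When $|\xi_{12}| \ll N_{(1)}$, the single mean value formula (\ref{eqn: mean value theorem}) applied to $a = m^2$ with the bound $|(m^2)'(\xi)| \lesssim m^2(\xi)/|\xi|$ yields $|M_4| \lesssim |\xi_{12}|\, m^2(N_{(1)})/N_{(1)}$, hence $|\sigma_4| \lesssim m^2(N_{(1)})/(|\xi_{23}|N_{(1)}^3)$; this is the target estimate as long as $|\xi_{23}| \sim N_{(1)}$. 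In the borderline case where both $\xi_{12}$ and $\xi_{23}$ are simultaneously small relative to $N_{(1)}$---equivalent to $\xi_1 \approx \xi_3$, $\xi_2 \approx \xi_4$---one instead rewrites $2M_4$ as a genuine second-order discrete difference in the two shifts and applies the double mean value (\ref{eqn: double mean value theorem}) with $|(m^2)''(\xi)| \lesssim m^2(\xi)/|\xi|^2$, gaining the missing factor $|\xi_{23}|/N_{(1)}$.

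\textbf{Main obstacle.} The principal difficulty is the casework according to which of the $N_i$ lie below $N$. In those subcases some $m^2(\xi_i) = 1$ identically, so the corresponding pairwise difference must be treated via the boundary estimate $|m^2(\xi_j) - 1| \lesssim m^2(\xi_j)$ rather than a pure mean value. The task is to verify, in every pairing of high/low among the four indices and in every ordering of the shifts $\xi_{12}, \xi_{23}$, that the gain produced by the mean value step matches exactly the factors $\xi_{12}, \xi_{23}$ appearing in $|\alpha_4|$, so that no degenerate subcase breaks the target bound.
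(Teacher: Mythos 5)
Your reduction to the single bound $|\sigma_4|\lesssim m^2(N_{(1)})/N_{(1)}^4$ is logically valid (it would imply the lemma), but the reduced statement is false, so the strategy cannot be completed. Take $s=-\tfrac12$ and the configuration $\xi_1=3N$, $\xi_2=N/2$, $\xi_3=K$, $\xi_4=-3.5N-K$ with $K\gg N$. Then $m^2(\xi_1)-m^2(\xi_2)=\tfrac13-1$ is of order $1$ while $m^2(\xi_3)-m^2(\xi_4)=O(N^2/K^2)$, so $|M_4|\sim 1$; on the other hand $|\alpha_4|\sim|\xi_{12}|\,|\xi_{23}|\,K^2\sim N\cdot K\cdot K^2$, hence $|\sigma_4|\sim (NK^3)^{-1}$. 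This is consistent with the lemma's bound $m^2(\min N_i)/\prod_i(N+N_i)\sim (N^2K^2)^{-1}$ (because $N\lesssim K$), but it violates your reduced target $m^2(K)/K^4=N/K^5$ as soon as $K\gg N$. The lemma's right-hand side genuinely depends on all four frequencies and is much larger than $m^2(N_{(1)})/N_{(1)}^4$ when the frequencies are spread out; that information cannot be discarded at the outset.

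The error surfaces concretely in your numerator estimate: for the bracket $m^2(\xi_1)-m^2(\xi_1-\xi_{12})$, the mean value formula \eqref{eqn: mean value theorem} requires $|\xi_{12}|\ll|\xi_1|$ and produces the factor $\sup_{|\xi'|\sim|\xi_1|}|(m^2)'(\xi')|\sim m^2(N_1)/N_1$, evaluated at the size of $\xi_1$ itself, not at $N_{(1)}$. Since $m^2(\xi)/|\xi|$ is decreasing for $s<0$, this supremum is much larger than $m^2(N_{(1)})/N_{(1)}$ whenever $\xi_1$ is a low frequency, and the casework you defer to the ``main obstacle'' cannot recover the loss because the target is already unattainable there (the obstruction is not only frequencies below $N$ but any frequency well below $N_{(1)}$). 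The paper instead pairs $(\xi_1,\xi_2)$ and $(\xi_3,\xi_4)$, measures each mean-value gain against the maximal frequency of that particular pair, and carries the resulting reciprocal of that frequency into the corresponding factor of $\prod_i(N+N_i)$ --- exactly the bookkeeping your reduction erases. Your factorization of $\alpha_4$, the observation that the quadratic factor is $\sim N_{(1)}^2$, and the double mean value argument in the fully non-degenerate regime are correct and agree with the paper's Case 1, but the proof as organized does not close.
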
 
 \begin{proof}
 	In the following, we use the notation 	
 	$|\xi_j+\xi_k|\sim N_{jk}$, where $N_{j,k}$ are dyadic numbers.
 	If $|\xi_{\max}|\ll N$, then $m^2\left(\xi_1 \right)-m^2\left(\xi_2 \right)+m^2\left(\xi_3 \right)-m^2\left(\xi_4\right)=0. $
 	Hence, we may assume that $|\xi_{\max}|\sim |\xi_{sub}|\gtrsim N$.
 	Recall that
 	\begin{align*}
 	\frac{M_4}{\alpha_4}=\frac{m^2\left(\xi_1\right)-m^2\left(\xi_2\right)+m^2\left(\xi_3\right)-m^2\left(\xi_4\right)}{i\left(\xi_1^4-\xi_2^4+\xi_3^4-\xi_4^4 \right)}.     
 	\end{align*}
 	Note that the resonance function on the hyperplane
 	$\left\{\xi_1+\dots+\xi_4=0\right\}$ is given by
 	\begin{align*}
 	\xi_1^4-\xi_2^4+\xi_3^4-\xi_4^4=\left(\xi_1+\xi_2\right)\left(\xi_2+\xi_3\right)\left(\xi_1^2+\xi_2^2+\xi_3^2+\left(\xi_1+\xi_2+\xi_3 \right)^2 +2\left(\xi_1+\xi_3\right)^2\right).
 	\end{align*}	
 	Therefore, on the hyperplane $\left\{\xi_1+\dots+\xi_4=0\right\}$ we consider
 	\begin{align*}
 	\frac{m^2\left(\xi_1\right)-m^2\left(\xi_2\right)+m^2\left(\xi_3\right)-m^2\left(\xi_4\right)}{\left(\xi_1+\xi_2\right)\left(\xi_1+\xi_4\right)\left(\xi_1^2+\xi_2^2+\xi_3^2+\left(\xi_1+\xi_2+\xi_3 \right)^2 +2\left(\xi_1+\xi_3\right)^2\right)}.
 	\end{align*} 	
 	We may assume that $N_{max}=N_1$. By symmetry, we can also assume $\vert \xi_{12} \vert \leq \vert \xi_{14} \vert$. We consider the two cases.
 	
 	\smallskip
 	
 	\noindent \textbf{Case 1.} $\vert \xi_{14} \vert \ll N_1.$\\
 	\noindent \textbf{Case 2.} $N_1 \lesssim \vert \xi_{14} \vert $.\\	
 	
 	\textbf{Case 1.} $\vert \xi_{14} \vert \ll N_1.$
 	In this case it suffices to show that
 	\begin{align*}
 	\vert m^2(\xi_1)-m^2(\xi_2)+m^2(\xi_3)-m^2(\xi_4) \vert\lesssim \vert \xi_{12} \vert \vert \xi_{14} \vert \frac{m^2(N_1)}{N_1^2}.
 	\end{align*}	 
 	By using the double mean value theorem $\eqref{eqn: double mean value theorem}$, we have
 	\begin{align*}
 	&\vert m^2(\xi_1)-m^2(\xi_2)+m^2(\xi_3)-m^2(\xi_4) \vert\\
 	&=\vert m^2(\xi_1-\xi_{12}-\xi_{14})-m^2(\xi_1-\xi_{12})-m^2(\xi_1-\xi_{14})+m^2(\xi_1))\\
 	&\lesssim\vert \xi_{12} \vert \vert \xi_{14} \vert \frac{m^2(N_1)}{N_1^2}.
 	\end{align*}	 
 	Since  $N_1 \gtrsim N $, we have $N_1+N\sim N_1.$ So we have the desired result.\\
 	
 	\textbf{Case 2.} $N_1 \lesssim \vert \xi_{14} \vert $.
 	Since $N_1 \lesssim \vert \xi_{14} \vert$, it suffices to consider
 	\begin{align*}
 	\frac{\vert m^2(\xi_1)-m^2(\xi_2) \vert }{\vert \xi_1 +\xi_2 \vert}\quad \text{and} \quad \frac{\vert m^2(\xi_3)-m^2(\xi_4) \vert }{\vert \xi_3 +\xi_4 \vert}.
 	\end{align*}	
 	If $N_{1}\lesssim \vert \xi_{12} \vert$, then we obtain the result directly. Hence, we may assume $\vert \xi_{12} \vert \ll N_1$. Then, by the mean value theorem $\eqref{eqn: mean value theorem}$, we have
 	\begin{align*}
 	\vert m^2(\xi_1)-m^2(\xi_2) \vert=&\vert m^2(\xi_1)-m^2(\xi_1-\xi_{12}) \vert\\
 	\lesssim & \vert \xi_{12} \vert\frac{m^2(N_1)}{N_1}. 
 	\end{align*}	
 	By symmetry, we now assume $N_3\geq N_4$. As we did in the previous step, if  $N_3 \lesssim \vert \xi_{12} \vert=\vert \xi_{34} \vert $, then we obtain the desired result. If $\vert \xi_{34} \vert \ll N_3 $, then by the mean value theorem $\eqref{eqn: mean value theorem}$ we have
 	\begin{align*}
 	\vert  m^2(\xi_3) -m^2(\xi_4) \vert=&\vert m^2(\xi_3)-m^2(\xi_3-\xi_{34})\vert\\
 	\lesssim& \vert \xi_{34} \vert \frac{m^2(N_3)}{N_3}. 
 	\end{align*}
 	Note that if $N_3 \ll N$, then we have $m^2(\xi_3)-m^2(\xi_4)=1-1=0 $. Therefore, it is enough to consider the case $N_3\gtrsim N$ and hence we have $N_3+N\sim N_3$.
 \end{proof}

 By applying the above lemma, the esimate of $M_6$ follows directly from the estimate $M_4$.
 \begin{prop}
 	If m is of the form (\ref{Im}),then
 	
 	\begin{align*}
 	\left|M_6\left(\xi_1,\dots,\xi_6\right) \right|\lesssim& 
 	\frac{m^2\left(min\left(N_i,N_{jkl}\right)\right)}{\left(N+N_1\right)\left(N+N_2\right)\left(N+N_3\right)\left(N+N_{456}\right)},
 	\end{align*}
 	where $N_i,N_{jkl}$ are dyadic numbers and $N_i\sim |\xi|, N_{jkl}\sim |\xi_k+\xi_k+\xi_l|.$
 \end{prop}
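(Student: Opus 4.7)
The proof is essentially a direct reduction to Lemma \ref{lem: multiplier estimate sigma_4}. The plan is to exploit the fact that $M_6$ is defined by
\begin{align*}
M_6(\xi_1,\dots,\xi_6) = i\sigma_4(\xi_1,\xi_2,\xi_3,\xi_4+\xi_5+\xi_6),
\end{align*}
so the estimate for $M_6$ should follow from the estimate on $\sigma_4$ after reinterpreting $\xi_4+\xi_5+\xi_6$ as a single frequency variable.

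The first step is to verify the consistency of the hyperplane condition. On $\Gamma_6 = \{\xi_1+\cdots+\xi_6 = 0\}$, setting $\eta_4 := \xi_4+\xi_5+\xi_6$ we get $\xi_1+\xi_2+\xi_3+\eta_4 = 0$, so the tuple $(\xi_1,\xi_2,\xi_3,\eta_4)$ lies on the $\Gamma_4$ hyperplane where Lemma \ref{lem: multiplier estimate sigma_4} is applicable. With $|\eta_4| \sim N_{456}$, the natural dyadic localization $(N_1,N_2,N_3,N_{456})$ on $\Gamma_4$ matches the hypotheses of that lemma.

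The second step is to apply Lemma \ref{lem: multiplier estimate sigma_4} directly with the substitution $\xi_4 \mapsto \xi_4+\xi_5+\xi_6$ and $N_4 \mapsto N_{456}$, yielding
\begin{align*}
|\sigma_4(\xi_1,\xi_2,\xi_3,\xi_4+\xi_5+\xi_6)| \lesssim \frac{m^2(\min(N_1,N_2,N_3,N_{456}))}{(N+N_1)(N+N_2)(N+N_3)(N+N_{456})},
\end{align*}
which is exactly the claimed bound, since the imaginary unit $i$ in the definition of $M_6$ contributes only absolute value $1$.

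There is essentially no obstacle here: the lemma was phrased in a frequency-symmetric way and does not require any particular sign or cancellation structure beyond the hyperplane condition. The only minor verification is that the symmetry assumption on $\sigma_4$ used in deriving Lemma \ref{lem: multiplier estimate sigma_4} (symmetry in the even indices and in the odd indices) is preserved under the substitution, which it is, since $\xi_4+\xi_5+\xi_6$ is being treated as a single argument in the fourth slot. Thus the proof is a one-line corollary of the previous lemma.
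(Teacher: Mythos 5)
Your argument is correct and is exactly the route the paper takes: the paper offers no separate proof of this proposition beyond the remark that the $M_6$ bound ``follows directly'' from Lemma \ref{lem: multiplier estimate sigma_4} via the substitution $\xi_4\mapsto\xi_4+\xi_5+\xi_6$, which is precisely your reduction. Your additional checks (that $(\xi_1,\xi_2,\xi_3,\xi_4+\xi_5+\xi_6)$ lies on $\Gamma_4$ and that $|i|=1$) are the right ones and the proposal is complete.
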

 
 \subsection{Almost conservation law}
 In this subsection, we present the following almost conservation law.
 \begin{lemma}[Almost conservation law.]\label{lem:almost conservation law}
 	Let $0<\delta \leq 1$. Then, for $m$ (given by (\ref{Im})) with $s=-\frac{1}{2}$, we have
 	\begin{align*}
 	\left|  E_{I}^4(\delta)-E_{I}^4(0) \right|&=\left|\int_0^\delta 4\Re \Lambda_6 \left(M_6\right)\,dt \right|\\
 	&\lesssim  N^{-3}\left\|I (\eta u) \right\|_{X^{0,\frac{1}{2}+}}^6,
 	\end{align*}
 	where $\eta$ is a smooth time cutoff function which satisfies $\eta =1 $ on $[0,\delta]$.
 \end{lemma}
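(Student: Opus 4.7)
The plan is to estimate $\int_0^\delta\Lambda_6(M_6;u)\,dt$ by a multilinear analysis in frequency space, combining the pointwise bound on $M_6$ from the preceding proposition with H\"older's inequality and the $X^{s,b}$-Strichartz estimates of Lemma~\ref{lem:Strichartz, bilinear Strichartz X space}. Substituting $\widehat{u_j}(\xi_j)=m(\xi_j)^{-1}\widehat{Iu_j}(\xi_j)$ in every factor and performing a Littlewood-Paley decomposition $Iu_j=\sum_{N_j} P_{N_j}Iu_j$ reduces matters to controlling, on each dyadic configuration $|\xi_j|\sim N_j$, an integral
\[
\Bigl|\int_0^\delta\!\!\int_{\Gamma_6}\widetilde M_6\,\prod_{j=1}^{6}\widehat{P_{N_j}Iu_j}\,d\sigma\,dt\Bigr|,\qquad |\widetilde M_6|\lesssim\frac{m^2(\min(N_1,N_2,N_3,N_{456}))}{(N+N_1)(N+N_2)(N+N_3)(N+N_{456})\prod_{j=1}^{6}m(N_j)}.
\]
Since $M_4$, and hence $\sigma_4$ and $M_6$, vanish when every frequency is below $N$, we may assume that at least one $N_j\gtrsim N$; by momentum conservation on $\Gamma_6$ the two largest dyadic frequencies $N_1^*\geq\cdots\geq N_6^*$ must then satisfy $N_1^*\sim N_2^*\gtrsim N$.

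On each dyadic configuration the pure multilinear integral is controlled by H\"older in space-time: the workhorse choice places the four highest-frequency factors in $L_t^4L_x^\infty$ and the two lowest in $L_t^\infty L_x^2$,
\[
\Bigl|\int_0^\delta\!\!\int_{\mathbb R}\prod_{j=1}^{6}P_{N_j}Iu_j\,dx\,dt\Bigr|\lesssim (N_1^*N_2^*N_3^*N_4^*)^{-1/2}\prod_{j=1}^{6}\|P_{N_j}Iu_j\|_{X^{0,\frac12+}},
\]
thanks to the frequency-localized Strichartz bound $\|P_NIu\|_{L_t^4L_x^\infty}\lesssim N^{-1/2}\|P_NIu\|_{X^{0,\frac12+}}$. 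In configurations where the top two frequencies are widely separated from the others, the bilinear Strichartz estimate \eqref{eqn:bilinear X^{s,b}} can be invoked on the high pair to yield the stronger gain $(N_1^*)^{-3/2}$, providing additional room.

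Multiplying the bound on $\widetilde M_6$ by the multilinear gain and summing dyadically is then the main verification. In the balanced regime $N_1^*\sim\cdots\sim N_6^*\sim N^*\gtrsim N$, the identity $m(\xi)\sim(N/|\xi|)^{1/2}$ for $|\xi|\gtrsim N$ yields a per-box bound of order $N^{-2}(N^*)^{-4}$, summing to $O(N^{-6})$. The borderline case is the two-high / four-low configuration, in which $m^2(\min)=1$, the denominator contributes only $(N^*)^2 N^2$, $\prod_j m(N_j)= N/N^*$, and the multilinear gain reduces to $(N^*)^{-1}$; the resulting per-box contribution $(N^*)^{-2}N^{-3}$ sums over dyadic $N^*\geq N$ to the required $N^{-3}$. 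The main obstacle is the careful interplay between the denominator factor $(N+N_{456})$---which carries the non-resonance information built into $\sigma_4$---and the multilinear gain across intermediate configurations where the minimum in $m^2(\min)$ falls on $N_{456}$ rather than on an individual $N_j$; in those regimes bilinear Strichartz must be invoked in place of plain Strichartz to preserve dyadic summability. Once every case is checked, summing the dyadic contributions completes the proof.
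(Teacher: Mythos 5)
Your outline follows the same route as the paper's proof: the pointwise bound on $M_6$, division by $\prod_j m(N_j)$ to restore $Iu_j$, a dyadic decomposition with the observation that the two largest frequencies are comparable and $\gtrsim N$, block-by-block estimates via H\"older combined with the $L^4_tL^\infty_x$ and bilinear estimates of Lemma \ref{lem:Strichartz, bilinear Strichartz X space}, and dyadic summation. The numerology you check in the balanced and two-high/four-low regimes is consistent with the paper's. The main organizational difference is that you lead with the linear $L^4_tL^\infty_x$ placement and treat bilinear Strichartz as a backup, whereas the paper leads with bilinear Strichartz in most cases and falls back on $L^4_tL^\infty_x$ only when the two top frequencies sit on "opposite sides" and are comparable (Subcases 3.c and 4.c).

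One step needs to be repaired as written. The estimate \eqref{eqn:bilinear X^{s,b}} requires the two frequencies to be separated ($N_1\ll N_2$), so it cannot be "invoked on the high pair": the two largest dyadic frequencies are always comparable by momentum conservation, and the bilinear estimate is false for comparable frequencies. The correct deployment, as in the paper, is to pair each of the two comparable top frequencies $N^*$ with one of the \emph{low} factors, harvesting $(N^*)^{-3/2}$ from each such pairing. This matters quantitatively: in the configuration $N_4\sim N_5\sim N^*\gg N_1,N_2,N_3,N_6$ with the low frequencies $O(1)$, the multiplier contributes about $N^{-4}N^*$ while your workhorse gain is only $(N^*)^{-1}$, leaving a per-block bound with no decay in $N^*$, so the sum over dyadic $N^*\geq N$ diverges; the two low-high bilinear pairings supply the missing $(N^*)^{-2}$. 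Since you do state that bilinear Strichartz is needed for summability in the separated regimes, this is a defect of formulation rather than of strategy, but the pairing must be high-with-low. Finally, the factor $(N+N_{456})$ is not the crux you identify: the paper simply discards it (together with $m^2(\min)\leq 1$) and never exploits the dependence on $N_{456}$; all the genuine work lies in the frequency-separated two-high configurations just described.
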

 Before proving Lemma \ref{lem:almost conservation law}, we simplify the notation. Let us suppress the smooth time cut-off function $\eta $ from $\eta u$ and simply denote them by $u$.
 \begin{proof}
 	We may assume the functions $\widehat{u}_j$ are nonnegative. We project $\widehat{u}_j$ onto a dyadic piece $|\xi_j|\sim N_j$ where $N_j=2^{k_j} $ is a dyadic number for $k_j\in \left\{0,1,\dots \right\}$. Note that we are not decomposing the frequencies $\abs{\xi}\leq 1 $ here. 
 	By symmetry, it suffices to show that
 	
 	\begin{align*}
 	&\left| \int_{0}^{\delta} \Lambda_6\left(   \frac{m^2\left(\min\left(N_i,N_{jkl}\right)\right)}{\left(N+N_1\right)\left(N+N_2\right)\left(N+N_3\right)\left(N+N_{456}\right) m\left(N_1\right)\cdots m\left(N_6\right)} \right)\,dt   \right|\\
 	\lesssim& \, N^{-3} \left\| u\right\|_{X^{0,\frac{1}{2}+}}^6
 	\end{align*}
 	By symmetry, we may assume $N_1\geq N_2 \geq N_3 $ and $N_4\geq N_5 \geq N_6$. Observe that $M_4$ vanishes if $|\xi_i|\ll N$ for $i=1,2,3,4$. Therefore, we assume at least one and hence two of the $N_i \gtrsim N$.
 	Note that 
 	\begin{align*}
 	&\left| \int_{0}^{\delta} \Lambda_6\left(   \frac{m^2\left(\min\left(N_i,N_{jkl}\right)\right)}{\left(N+N_1\right)\left(N+N_2\right)\left(N+N_3\right)\left(N+N_{456}\right) m\left(N_1\right)\cdots m\left(N_6\right)}\right)  \,dt \right|\\
 	\lesssim & \left| \int_{0}^{\delta} \Lambda_6\left( \prod\limits_{j=1}^3  \frac{1}{\left(N+N_j\right)m\left(N_j\right)} \frac{1}{m\left(N_4\right)m\left(N_5\right)m\left(N_6\right) } \right)\,dt   \right|.
 	\end{align*}
 	Note that with $s=-\frac{1}{2}$, $\frac{1}{\left(N+N_i\right)m\left(N_i\right)}\lesssim N^{-\frac{1}{2}}N_i^{-\frac{1}{2}}$. We decompose the cases as follows:
 	
 	\smallskip
 	\noindent \textbf{Case 1}. $N\gg N_4 \geq N_5\geq N_6$.\\    
 	\textbf{Case 2.} $N_4\gtrsim N \gg N_5\geq N_6$.\\
 	\textbf{Case 3.} $N_4\geq N_5 \gtrsim N \gg N_6$.\\
 	\textbf{Case 4.} $N_4\geq N_5 \geq N_6 \gtrsim N$.\\

 	\textbf{Case 1.} $N \gg N_4 \geq N_5 \geq N_6$.
 	Since the two largest frequncies should be comparable and one of $N_1, N_2, N_3$ and  $N_{456 }$ must be $N_i\gtrsim N$, we have 
 	$N_1\sim N_2 \gtrsim N \gg N_4\geq N_5 \geq N_6 $.
 	In this case, we have $m\left(N_4\right)=m\left(N_5\right)=m\left(N_6\right)=1$. Hence, we consider the integral
 	\begin{align*}
 	\sum\limits_{N_1,\dots,N_6 \geq 1} N^{-\frac{3}{2}} \left|  \int_0^\delta \Lambda_6 \left( N_{1}^{-\frac{1}{2}} N_{2}^{-\frac{1}{2}}  N_{3}^{-\frac{1}{2}} \right)\,dt  \right|.
 	\end{align*}
 	Note that $\frac{N_1}{N^{\frac{1}{2}}N_4^{\frac{1}{2}}} \frac{N_2}{N^{\frac{1}{2}}N_5^{\frac{1}{2}}} \frac{N_2}{N^{\frac{1}{2}}N_6^{\frac{1}{2}}} \gtrsim 1 $. By multiplying this term, it suffices to bound
 	\begin{align*}
 	&\sum\limits_{N_1,\dots,N_6 \geq 1} N^{-3} \left|  \int_0^\delta \Lambda_6 \left( N_{1}^{-\frac{1}{2}} N_{2}^{-\frac{1}{2}}  N_{3}^{-\frac{1}{2}} N_1N_2N_2 N_{4}^{-\frac{1}{2}}N_{5}^{-\frac{1}{2}}  N_{6}^{-\frac{1}{2}}     \right)\,dt  \right|\\
 	&\lesssim N^{-3} \left\| u \right\|_{X^{0,\frac{1}{2}+}}^6.
 	\end{align*}
 	%\lesssim & \sum\limits_{N_1,\dots,N_6 \geq 1} N^{-3+}\left(N_1N_2N_3N_4N_5N_6\right)^{0-} \left|  \int_0^\delta \Lambda_6 \left( N_{1}^{\frac{1}{2}} N_{2}^{\frac{3}{2}} N_{3}^{-\frac{1}{2}-} N_{4}^{\frac{1}{2}}N_4^{-1} N_{5}^{-\frac{1}{2}}  N_{6}^{-\frac{1}{2}} \right) \,dt \right|\\
 	%\lesssim & \sum\limits_{N_1,\dots,N_6 \geq 1} N^{-3+}\left(N_1N_2N_3N_4N_5N_6\right)^{0-}   N_{1}^{\frac{1}{2}} N_{2}^{\frac{3}{2}} N_{3}^{-\frac{1}{2}-} N_{4}^{\frac{1}{2}}  N_{6}^{-2}\\
 	%& \hspace{60mm} \times \left\| u_{N_1} u_{N_4} u_{N_5} \right\|_{L_x^2L_{t\in [0,\delta]}^2} \left\| u_{N_2} u_{N_3} u_{N_6} \right\|_{L_x^2L_{t\in [0,\delta]}^2} 
 	By the bilinear Strichartz estimate $\eqref{eqn:bilinear X^{s,b}}$ and Bernstein \eqref{Bs}, we have
 	\begin{align*}
 	&\sum\limits_{N_1,\dots,N_6 \geq 1} N^{-3} \left|  \int_0^\delta \Lambda_6 \left( N_{1}^{-\frac{1}{2}} N_{2}^{-\frac{1}{2}}  N_{3}^{-\frac{1}{2}} N_1N_2N_2 N_{4}^{-\frac{1}{2}}N_{5}^{-\frac{1}{2}}  N_{6}^{-\frac{1}{2}}     \right)\,dt  \right|\\
 	\lesssim & \sum\limits_{N_1,\dots,N_6 \geq 1} N^{-3} \left( N_{1}^{-\frac{1}{2}} N_{2}^{-\frac{1}{2}}  N_{3}^{-\frac{1}{2}} N_1N_2N_2 N_{4}^{-\frac{1}{2}}N_{5}^{-\frac{1}{2}}  N_{6}^{-\frac{1}{2}}     \right)\\
 	&\hspace{40mm}\times \Vert u_{N_1} u_{N_4} \Vert_{L_{t,x}^2} \Vert u_{N_2}u_{N_5} \Vert_{L_{t,x}^2} \Vert u_{N_3} \Vert_{L_{t,x}^{\infty}} \Vert u_{N_6} \Vert_{L_{t,x}^{\infty}}\\
 	\lesssim&  \sum\limits_{N_1,\dots,N_6 \geq 1}  N^{-3} \left( N_{1}^{-\frac{1}{2}} N_{2}^{-\frac{1}{2}}   N_1N_2N_2 N_{4}^{-\frac{1}{2}}N_{5}^{-\frac{1}{2}}       \right)N_{1}^{-\frac{3}{2}}N_2^{-\frac{3}{2}} \\
 	&\hspace{20mm}\times \Vert u_{N_1} \Vert_{X^{0,\frac{1}{2}+}} \Vert u_{N_4} \Vert_{X^{0,\frac{1}{2}+}}  \Vert u_{N_2} \Vert_{X^{0,\frac{1}{2}+}}  \Vert u_{N_5} \Vert_{X^{0,\frac{1}{2}+}} \Vert u_{N_3} \Vert_{L_t^{\infty}L_x^2} \Vert u_{N_6} \Vert_{L_{t}^{\infty}L_x^2}\\
 	\lesssim& \sum\limits_{N_1,\dots,N_6 \geq 1} N^{-3} N_{1}^{-1}N_{4}^{-\frac{1}{2}}N_{5}^{-\frac{1}{2}}  \Vert u \Vert_{X^{0,\frac{1}{2}+}}^6\\
 	\lesssim& \; N^{-3} \left\| u \right\|_{X^{0,\frac{1}{2}+}}^6.
 	\end{align*}

 	%\begin{align*}
 	%&\sum\limits_{N_1,\dots,N_6 \geq 1} N^{-3+}\left(N_1N_2N_3N_4N_5N_6\right)^{0-}   N_{1}^{\frac{1}{2}} N_{2}^{\frac{3}{2}} N_{3}^{-\frac{1}{2}-} N_{4}^{\frac{1}{2}}  N_{6}^{-2}\\
 	%& \hspace{30mm} \times \left\|  u_{N_1} \right\|_{L_t^4L_x^{\infty}} \left\|  u_{N_4} \right\|_{L_t^4 L_x^\infty} \left\|  u_{N_5} \right\|_{L_t^\infty L_x^{2}} \left\|  u_{N_2} \right\|_{L_x^\infty L_{t}^2}  \left\|  u_{N_3} \right\|_{L_t^\infty L_x^\infty}  \left\|  u_{N_6} \right\|_{L_x^{2}L_{t\in [-\delta,\delta]}^{\infty}}\\
 	%&\lesssim N^{-3+}\prod\limits_{j=1}^6 \left\| u_j \right\|_{X_\delta^{0,\frac{1}{2}+}} 
 	%\end{align*}

 	\textbf{Case 2.} $N_4\gtrsim N \gg N_5\geq N_6 $. In this case, we have $m(N_5)=m(N_6)=1$.
 	Since the two largest frequencies must be comparable, we obtain $N_1\gtrsim N_4$ and hence $N_1 \gtrsim N \gg N_5 \geq N_6 $.
 	We consider the integral
 	\begin{align*}
 	\sum\limits_{N_1,\dots,N_6 \geq 1} N^{-2} \left|  \int_0^\delta \Lambda_6 \left( N_{1}^{-\frac{1}{2}} N_{2}^{-\frac{1}{2}}  N_{3}^{-\frac{1}{2}} N_4^{\frac{1}{2}}\right)\,dt  \right|.
 	\end{align*}
 	Note that $\frac{N_1}{N^{\frac{1}{2}}N_5^{\frac{1}{2}}} \frac{N_4}{N^{\frac{1}{2}}N_6^{\frac{1}{2}}} \gtrsim 1 $. 
 	Then, by multiplying this term, it suffices to bound
 	\begin{align*}
 	&\sum\limits_{N_1,\dots,N_6 \geq 1} N^{-3} \left|  \int_0^\delta \Lambda_6 \left( N_{1}^{-\frac{1}{2}} N_{2}^{-\frac{1}{2}}  N_{3}^{-\frac{1}{2}} N_1 N_4  N_{4}^{\frac{1}{2}}N_{5}^{-\frac{1}{2}}  N_{6}^{-\frac{1}{2}}     \right)\,dt  \right|\\
 	&\lesssim N^{-3} \left\| u \right\|_{X^{0,\frac{1}{2}+}}^6.
 	\end{align*}
 	From the bilinear Strichartz estimate $\eqref{eqn:bilinear X^{s,b}}$ and Bernstein \eqref{Bs}, we have
 	\begin{align*}
 	&\sum\limits_{N_1,\dots,N_6 \geq 1} N^{-3} \left|  \int_0^\delta \Lambda_6 \left( N_{1}^{-\frac{1}{2}} N_{2}^{-\frac{1}{2}}  N_{3}^{-\frac{1}{2}} N_1 N_4 N_{4}^{\frac{1}{2}}N_{5}^{-\frac{1}{2}}  N_{6}^{-\frac{1}{2}}     \right)\,dt  \right|\\
 	\lesssim & \sum\limits_{N_1,\dots,N_6 \geq 1} N^{-3} \left( N_{1}^{-\frac{1}{2}} N_{2}^{-\frac{1}{2}}  N_{3}^{-\frac{1}{2}} N_1N_4 N_{4}^{\frac{1}{2}}N_{5}^{-\frac{1}{2}}  N_{6}^{-\frac{1}{2}}     \right)\\
 	&\hspace{40mm}\times \Vert u_{N_1} u_{N_5} \Vert_{L_{t,x}^2} \Vert u_{N_4}u_{N_6} \Vert_{L_{t,x}^2} \Vert u_{N_2} \Vert_{L_{t,x}^{\infty}} \Vert u_{N_3} \Vert_{L_{t,x}^{\infty}}\\
 	\lesssim&  \sum\limits_{N_1,\dots,N_6 \geq 1}  N^{-3} \left( N_{1}^{-\frac{1}{2}}    N_1N_4 N_4^{\frac{1}{2}} N_{5}^{-\frac{1}{2}} N_{6}^{-\frac{1}{2}}       \right)N_{1}^{-\frac{3}{2}}N_4^{-\frac{3}{2}} \\
 	&\hspace{20mm} \times \Vert u_{N_1} \Vert_{X^{0,\frac{1}{2}+}} \Vert u_{N_5} \Vert_{X^{0,\frac{1}{2}+}}  \Vert u_{N_4} \Vert_{X^{0,\frac{1}{2}+}}  \Vert u_{N_6} \Vert_{X^{0,\frac{1}{2}+}} \Vert u_{N_2} \Vert_{L_t^{\infty}L_x^2} \Vert u_{N_3} \Vert_{L_{t}^{\infty}L_x^2}\\
 	\lesssim& \sum\limits_{N_1,\dots,N_6 \geq 1} N^{-3} N_{1}^{-1} N_{5}^{-\frac{1}{2}}N_{6}^{-\frac{1}{2}} \left\| u \right\|_{X^{0,\frac{1}{2}+}}^6 \\
 	\lesssim& \; N^{-3} \left\| u \right\|_{X^{0,\frac{1}{2}+}}^6.
 	\end{align*}
 	
 	\textbf{Case 3.} $N_4 \geq N_5 \gtrsim N \gg N_6$. In this case we have $m(N_6)=1$.
 	We consider a integral
 	\begin{align*}
 	&\sum\limits_{N_1,\dots,N_6 \geq 1} N^{-\frac{5}{2}} \left|  \int_0^\delta \Lambda_6 \left( N_{1}^{-\frac{1}{2}} N_{2}^{-\frac{1}{2}}  N_{3}^{-\frac{1}{2}} N_{4}^{\frac{1}{2}}N_{5}^{\frac{1}{2}}     \right)\,dt  \right|. 
 	\end{align*}
 	Note that $\frac{N_5}{N^{\frac{1}{2}}N_6^{\frac{1}{2}} }\gtrsim 1$. Therefore, by multiplying this term, it suffices to bound
 	\begin{align*}
 	&\sum\limits_{N_1,\dots,N_6\geq 1}N^{-3}\left| \int_0^{\delta} \Lambda_6\left(N_1^{-\frac{1}{2}}N_2^{\frac{1}{2}} N_3^{-\frac{1}{2}} N_4^{\frac{1}{2}}N_5^{\frac{1}{2}} N_5 N_6^{-\frac{1}{2}}   \right)\,dt \right|\\
 	\lesssim& N^{-3} \Vert u \Vert_{X^{0,\frac{1}{2}+}}^6.
 	\end{align*}
 	
 	\noindent \textbf{Subcase 3.a.} $N_1\gg N_4$.
 	Since the two largest frequencies must be comparable, we have $N_1\sim N_2 \gg N_4\geq N_5 \gg N_6$.
 	By using the bilinear Strichartz $\eqref{eqn:bilinear X^{s,b}}$ and Bernstein’s inequality \eqref{Bs}, we have
 	\begin{align*}
 	&\sum\limits_{N_1,\dots,N_6\geq 1}N^{-3} \int_0^\delta \Lambda_6\left( N_1^{-\frac{1}{2}} N_2^{-\frac{1}{2}}N_3^{-\frac{1}{2}}N_4^{\frac{1}{2}}N_5^{\frac{1}{2}}N_5 N_6^{-\frac{1}{2}} \right)\,dt\\
 	&\lesssim\sum\limits_{N_1,\dots,N_6\geq 1 }N^{-3}\left( N_1^{-\frac{1}{2}}N_2^{-\frac{1}{2}}N_3^{-\frac{1}{2}}N_4^{\frac{1}{2}}N_5^{\frac{1}{2}}N_5 N_6^{-\frac{1}{2}} \right)\\
 	&\hspace{10mm} \times \Vert u_{N_1} u_{N_4} \Vert_{L_{t,x}^2} \Vert u_{N_2} u_{N_5} \Vert_{L_{t,x}^2} \Vert u_{N_3} \Vert_{L_{t,x}^{\infty}} \Vert u_{N_6} \Vert_{L_{t,x}^{\infty}}\\
 	&\lesssim\sum\limits_{N_1,\dots N_6\geq 1 }N^{-3}\left(N_1^{-\frac{1}{2}}N_2^{-\frac{1}{2}}N_4^{\frac{1}{2}}N_5^{\frac{1}{2}}N_5  \right)N_1^{-\frac{3}{2}}N_2^{-\frac{3}{2}}\\
 	&\hspace{10mm} \times \Vert u_1 \Vert_{X^{0,\frac{1}{2}+}}\Vert u_4 \Vert_{X^{0,\frac{1}{2}+}}\Vert u_2 \Vert_{X^{0,\frac{1}{2}+}}\Vert u_5 \Vert_{X^{0,\frac{1}{2}+}} \Vert u_{N_3} \Vert_{L_{t}^{\infty}L_x^2}\Vert u_{N_6} \Vert_{L_t^{\infty}L_x^2}\\
 	&\lesssim \sum\limits_{N_1,\dots,N_6\geq 1}N^{-3}N_1^{-2}N_2^{-2}N_4^{\frac{1}{2}}N_5^{\frac{1}{2}}N_5\Vert u \Vert_{X^{0,\frac{1}{2}+}}^6\\
 	&\lesssim N^{-3}\Vert u_j \Vert_{X^{0,\frac{1}{2}+}}^6.
 	\end{align*} 
 	
 	\noindent \textbf{Subcase 3.b.} $N_4\gg N_1$.
 	Since the two largest freqeuncies must be comparable, we have $N_4\sim N_5 \gg N_1\geq N_2\geq N_3$.
 	By using the bilinear Stricharatz \eqref{eqn:bilinear X^{s,b}} and Bernstein’s inequality \eqref{Bs}, we have
 	\begin{align*}
 	&\sum\limits_{N_1,\dots,N_6\geq 1}N^{-3} \int_0^\delta \Lambda_6\left( N_1^{-\frac{1}{2}} N_2^{-\frac{1}{2}}N_3^{-\frac{1}{2}}N_4^{\frac{1}{2}}N_5^{\frac{1}{2}}N_5 N_6^{-\frac{1}{2}} \right)\,dt\\
 	&\lesssim\sum\limits_{N_1,\dots,N_6\geq 1 }N^{-3}\left( N_1^{-\frac{1}{2}}N_2^{-\frac{1}{2}}N_3^{-\frac{1}{2}}N_4^{\frac{1}{2}}N_5^{\frac{1}{2}}N_5 N_6^{-\frac{1}{2}} \right)\\
 	&\hspace{10mm} \times \Vert u_{N_1} u_{N_4} \Vert_{L_{t,x}^2} \Vert u_{N_2} u_{N_5} \Vert_{L_{t,x}^2} \Vert u_{N_3} \Vert_{L_{t,x}^{\infty}} \Vert u_{N_6} \Vert_{L_{t,x}^{\infty}}\\
 	&\lesssim\sum\limits_{N_1,\dots N_6\geq 1 }N^{-3}\left(N_1^{-\frac{1}{2}}N_2^{-\frac{1}{2}}N_4^{\frac{1}{2}}N_5^{\frac{1}{2}}N_5  \right)N_4^{-\frac{3}{2}}N_5^{-\frac{3}{2}}\\
 	&\hspace{10mm} \times \Vert u_1 \Vert_{X^{0,\frac{1}{2}+}}\Vert u_4 \Vert_{X^{0,\frac{1}{2}+}}\Vert u_2 \Vert_{X^{0,\frac{1}{2}+}}\Vert u_5 \Vert_{X^{0,\frac{1}{2}+}} \Vert u_{N_3} \Vert_{L_{t}^{\infty}L_x^2}\Vert u_{N_6} \Vert_{L_t^{\infty}L_x^2}\\
 	&\lesssim \sum\limits_{N_1,\dots,N_6\geq 1}N^{-3}N_4^{-1}N_5^{-1}N_5N_1^{-\frac{1}{2}}N_2^{-\frac{1}{2}} \Vert u \Vert_{X^{0,\frac{1}{2}+}}^6\\
 	&\lesssim N^{-3}\Vert u \Vert_{X^{0,\frac{1}{2}+}}^6.
 	\end{align*} 
 	
 	\noindent \textbf{Subcase 3.c.} $N_4\sim N_1$.
 	Note that $N_1\sim N_4 \geq N_5 \gg N_6$. From the bilinear Strichartz estimate \eqref{eqn:bilinear X^{s,b}}, Strichartz estimate $\eqref{eqn: X^{s,b} Strichartz}$ and Bernstein inequality \eqref{Bs}, we have 
 	\begin{align*}
 	&\sum\limits_{N_1,\dots,N_6\geq 1}N^{-3} \int_0^\delta \Lambda_6\left( N_1^{-\frac{1}{2}} N_2^{-\frac{1}{2}}N_3^{-\frac{1}{2}}N_4^{\frac{1}{2}}N_5^{\frac{1}{2}}N_5 N_6^{-\frac{1}{2}} \right)\,dt\\
 	&\lesssim\sum\limits_{N_1,\dots,N_6\geq 1 }N^{-3}\left( N_1^{-\frac{1}{2}}N_2^{-\frac{1}{2}}N_3^{-\frac{1}{2}}N_4^{\frac{1}{2}}N_5^{\frac{1}{2}}N_5 N_6^{-\frac{1}{2}} \right)\\
 	&\hspace{10mm} \times \Vert u_{N_1}u_{N_6} \Vert_{L_{t,x}^2}\Vert u_{N_2} \Vert_{L_{t}^{\infty}L_x^2} \Vert u_{N_3}\Vert_{L_{t,x}^{\infty}} \Vert u_{N_4} \Vert_{L_t^4L_x^{\infty}} \Vert u_{N_5} \Vert_{L_t^4L_x^{\infty}}\\
 	&\lesssim \sum\limits_{N_1,\dots,N_6\geq 1}N^{-3}  \left(N_1^{-\frac{1}{2}} N_2^{-\frac{1}{2}} N_5 N_6^{-\frac{1}{2}}   \right)N_1^{-\frac{3}{2}}  \Vert u \Vert_{X^{0,\frac{1}{2}+}}^6\\
 	&\lesssim N^{-3}\Vert u \Vert_{X^{0,\frac{1}{2}+}}^6.
 	\end{align*}

 	\textbf{Case 4.} $N_4\geq N_5 \geq N_6 \gtrsim N$.\\
 	
 	\noindent \textbf{Subcase 4.a.} $N_1 \gg N_4 $.	
 	%In this case, we have $\frac{1}{m\left(N_4\right)}\sim N^{-\frac{1}{2}+}N_4^{\frac{1}{2}-}, \;\frac{1}{m\left(N_5\right)}\sim N^{-\frac{1}{2}+}N_5^{\frac{1}{2}-}$ ,$\frac{1}{m\left(N_6\right)}\sim N^{-\frac{1}{2}+}N_6^{\frac{1}{2}-}$ and $\frac{1}{m\left(N_3\right)}= N^{-\frac{1}{2}+} N_{3}^{\frac{1}{2}-} $
 	Since the two largest frequencies must be comparable, we have $N_1\sim N_2 \gg N_4\geq N_5 \geq N_6 $.
 	In this case, we consider the integral
 	\begin{align*}
 	\sum\limits_{N_1,\dots ,N_6 \geq 1 }N^{-3}\int_0^\delta \Lambda_6\left( N_1^{-\frac{1}{2}}  N_2^{-\frac{1}{2}}  N_3^{-\frac{1}{2}}  N_4^{\frac{1}{2}}  N_5^{\frac{1}{2}}  N_6^{\frac{1}{2}} \right)\,dt.
 	\end{align*}
 	By the bilinear Strichartz estimate $\eqref{eqn:bilinear X^{s,b}}$ and Bernstein inequality \eqref{Bs}, we have
 	\begin{align*}
 	&\sum\limits_{N_1,\dots ,N_6 \geq 1}N^{-3}\int_0^\delta \Lambda_6\left( N_1^{-\frac{1}{2}}  N_2^{-\frac{1}{2}}  N_3^{-\frac{1}{2}}  N_4^{\frac{1}{2}}  N_5^{\frac{1}{2}}  N_6^{\frac{1}{2}}\right)\,dt\\
 	&\lesssim \sum\limits_{N_1,\dots,N_6 \geq 1} N^{-3} \left(N_1^{-\frac{1}{2}} N_2^{-\frac{1}{2}} N_3^{-\frac{1}{2}} N_4^{\frac{1}{2}} N_5^{\frac{1}{2}} N_6^{\frac{1}{2}} \right)\\
 	& \hspace{20mm} \times \Vert u_{N_1} u_{N_4} \Vert_{L_{t,x}^2} \Vert u_{N_2} u_{N_5} \Vert_{L_{t,x}^2} \Vert u_{N_3} \Vert_{L_{t,x}^{\infty}} \Vert u_{N_6} \Vert_{L_{t,x}^{\infty}}\\
 	& \lesssim  \sum\limits_{N_1,\dots,N_6 \geq 1} N^{-3} \left(N_1^{-\frac{1}{2}} N_2^{-\frac{1}{2}} N_3^{-\frac{1}{2}} N_4^{\frac{1}{2}} N_5^{\frac{1}{2}} N_6^{\frac{1}{2}} \right) N_1^{-\frac{3}{2}} N_2^{-\frac{3}{2}} N_{3}^{\frac{1}{2}}N_{6}^{\frac{1}{2}}\\
 	& \hspace{20mm} \times \Vert u_{N_1} \Vert_{X^{0,\frac{1}{2}+}}	\Vert u_{N_4} \Vert_{X^{0,\frac{1}{2}+}} \Vert u_{N_2} \Vert_{X^{0,\frac{1}{2}+}} \Vert u_{N_5} \Vert_{X^{0,\frac{1}{2}+}} \Vert u_{N_3} \Vert_{L_{t}^{\infty}L_x^2} \Vert u_{N_6} \Vert_{L_t^{\infty} L_x^2 }\\
 	& \lesssim \sum\limits_{N_1,\dots,N_6 \geq 1} N^{-3} N_1^{-2}N_2^{-2}N_4^{\frac{1}{2}}N_5^{\frac{1}{2}}N_6^{1}\Vert u \Vert_{X^{0,\frac{1}{2}+}}^6\\
 	& \lesssim N^{-3}  \Vert u \Vert_{X^{0,\frac{1}{2}+}}^6.
 	\end{align*}

 	\noindent \textbf{Subcase 4.b} $N_4 \gg N_1 $.
 	Since the two largest frequencies must be comparable, we have $N_4\sim N_5 \gg N_1\geq N_2 \geq N_3 $. In this case, it suffices to consider the integral 
 	\begin{align*}
 	\sum\limits_{N_1, \dots, N_6 \geq 1}N^{-3}\int_0^\delta \Lambda_6\left( N_1^{-\frac{1}{2}}  N_2^{-\frac{1}{2}}  N_3^{-\frac{1}{2}}  N_4^{\frac{1}{2}}  N_5^{\frac{1}{2}}  N_6^{\frac{1}{2}} \right)\,dt.
 	\end{align*}
 	By the bilinear Strichartz estimate $\eqref{eqn:bilinear X^{s,b}}$ and Bernstein’s inequality \eqref{Bs}, we have
 	\begin{align*}
 	&\sum\limits_{N_1,\dots , N_6\geq 1}N^{-3}\int_0^\delta \Lambda_6\left( N_1^{-\frac{1}{2}}  N_2^{-\frac{1}{2}}  N_3^{-\frac{1}{2}}  N_4^{\frac{1}{2}}  N_5^{\frac{1}{2}}  N_6^{\frac{1}{2}} \right)\,dt\\
 	&\lesssim \sum\limits_{N_1,\dots,N_6 \geq 1} N^{-3} \left(N_1^{-\frac{1}{2}} N_2^{-\frac{1}{2}} N_3^{-\frac{1}{2}} N_4^{\frac{1}{2}} N_5^{\frac{1}{2}} N_6^{\frac{1}{2}} \right)\\
 	& \hspace{20mm} \times \Vert u_{N_1} u_{N_4} \Vert_{L_{t,x}^2} \Vert u_{N_2} u_{N_5} \Vert_{L_{t,x}^2} \Vert u_{N_3} \Vert_{L_{t,x}^{\infty}} \Vert u_{N_6} \Vert_{L_{t,x}^{\infty}}\\
 	& \lesssim  \sum\limits_{N_1,\dots,N_6 \geq 1} N^{-3} \left(N_1^{-\frac{1}{2}} N_2^{-\frac{1}{2}} N_3^{-\frac{1}{2}} N_4^{\frac{1}{2}} N_5^{\frac{1}{2}} N_6^{\frac{1}{2}} \right) N_4^{-\frac{3}{2}} N_5^{-\frac{3}{2}} N_{3}^{\frac{1}{2}}N_{6}^{\frac{1}{2}}\\
 	& \hspace{20mm} \times \Vert u_{N_1} \Vert_{X^{0,\frac{1}{2}+}}	\Vert u_{N_4} \Vert_{X^{0,\frac{1}{2}+}} \Vert u_{N_2} \Vert_{X^{0,\frac{1}{2}+}} \Vert u_{N_5} \Vert_{X^{0,\frac{1}{2}+}} \Vert u_{N_3} \Vert_{L_{t}^{\infty}L_x^2} \Vert u_{N_6} \Vert_{L_t^{\infty} L_x^2 }\\
 	& \lesssim \sum\limits_{N_1,\dots,N_6 \geq 1} N^{-3} N_4^{-3}N_1^{-\frac{1}{2}}N_2^{-\frac{1}{2}}N_4^{\frac{1}{2}}N_5^{\frac{1}{2}}N_6^{1}\Vert u \Vert_{X^{0,\frac{1}{2}+}}^6\\
 	& \lesssim N^{-3} \Vert u \Vert_{X^{0,\frac{1}{2}+}}^6.
 	\end{align*}
 	
 	\noindent \textbf{Subcase 4.c} $N_4 \sim  N_1 $. We consider the integral 
 	\begin{align*}
 	N^{-3}\int_0^\delta \Lambda_6\left( N_1^{-\frac{1}{2}}  N_2^{-\frac{1}{2}}  N_3^{-\frac{1}{2}}  N_4^{\frac{1}{2}}  N_5^{\frac{1}{2}}  N_6^{\frac{1}{2}} \right)\,dt.
 	\end{align*}
 	\noindent
 	From the Strichartz estimate \eqref{eqn: X^{s,b} Strichartz}, we have
 	\begin{align*}
 	&N^{-3}\int_0^\delta \Lambda_6\left( N_1^{-\frac{1}{2}}  N_2^{-\frac{1}{2}}  N_3^{-\frac{1}{2}}  N_4^{\frac{1}{2}}  N_5^{\frac{1}{2}}  N_6^{\frac{1}{2}} \right)\,dt\\
 	& \lesssim  \sum\limits_{N_1,\dots,N_6 \geq 1} N^{-3} N_1^{-\frac{1}{2}} N_2^{-\frac{1}{2}} N_3^{-\frac{1}{2}} N_4^{\frac{1}{2}} N_5^{\frac{1}{2}} N_{6}^{\frac{1}{2}}\\
 	&\hspace{10mm} \times \Vert u_{N_1} \Vert_{L_t^4L_x^{\infty}} \Vert u_{N_2} \Vert_{L_t^{\infty}L_x^2} \Vert u_{N_3} \Vert_{L_t^\infty L_x^2}  \Vert u_{N_4} \Vert_{L_t^4L_x^{\infty}}  \Vert u_{N_5} \Vert_{L_t^4L_x^{\infty}}   \Vert u_{N_6} \Vert_{L_t^4L_x^{\infty}}\\
 	&\lesssim \sum\limits_{N_1,\dots,N_6 \geq 1} N^{-3} N_1^{-1}N_2^{-\frac{1}{2}}N_3^{-\frac{1}{2}}  \Vert u \Vert_{X^{0,\frac{1}{2}+}}^6\\
 	&\lesssim N^{-3}   \Vert u \Vert_{X^{0,\frac{1}{2}+}}^6.
 	\end{align*}
 	
 	\noindent
 	This completes the proof of Lemma \ref{lem:almost conservation law}. 
 	
 \end{proof}

 \subsection{Correction term estimate}
 The following  lemma shows that $E_I^4(t)$ is very close to $E_I^2(t)$.
 \begin{lemma}\label{LEM:cor}
 	Let $I$ be defined with the multiplier $m$ of the form (\ref{Im}) and $s=-\frac{1}{2}$. Then, 
 	\begin{align*}
 	\left|E_I^4(t)-E_I^2(t) \right|\lesssim \left\| Iu(t)\right\|_{L_x^2}^4.
 	\end{align*}
 \end{lemma}
 
 \begin{proof}
 	Note that $E_I^4(t)=E_I^2(t)+\Lambda_4\left(\sigma_4\right)$ and 
 	\begin{align*}
 	\left|\sigma_4\left(\xi_1,\dots,\xi_4\right)  \right|\lesssim \frac{m^2\left(\min\left(N_i\right)\right)}{\left(N+N_1\right) \left(N+N_2\right) \left(N+N_3\right) \left(N+N_4\right)}
 	\end{align*}
 	with $\left|\xi_i\right| \sim N_i $.
 	We restrict our attention to the contribution arising from $\left|\xi_i\right| \sim N_i $. It suffices to bound
 	\begin{align*}
 	\left| \Lambda_4 \left(\sigma_4 \right)\right|\lesssim \prod\limits_{j=1}^4 \left\|Iu_j(t)  \right\|_{L_x^2}.
 	\end{align*} 
 	We may assume the functions $\widehat{u}_j$ are nonnegative. We project $\widehat{u}_j$ to a dyadic piece $|\xi_j|\sim N_j$ where $N_j=2^{k_j} $ is a dyadic number for $k_i\in \left\{0,1,\dots \right\}$. Note that we are not decomposing the frequencies $\abs{\xi}\leq 1 $ here.
 	We may assume $N_1\geq N_2 \geq N_3 \geq N_4$ with $N_1\sim N_2 \gtrsim N $.
 	Therefore, for $s=-\frac{1}{2}$, it suffices to bound
 	\begin{equation}
 	\begin{split}
 	\label{eqn:E_I^4 estimate}
 	&\sum\limits_{N_1,\dots N_4 \geq 1 }\Lambda_4 \left(\frac{1}{\left(N+N_1\right) \left(N+N_2\right) \left(N+N_3\right) \left(N+N_4\right)}  \frac{1}{m\left(N_1\right) m\left(N_2\right) m\left(N_3\right) m\left(N_4\right)} \right)\\
 	&\lesssim \prod\limits_{j=1}^4 \Vert u_j \Vert_{L_x^2}.
 	\end{split}
 	\end{equation}
 	The multiplier appearing in $\eqref{eqn:E_I^4 estimate}$ is 
 	\begin{align*}
 	\frac{N^{4s}}{N_{1}^{1+s} N_{2}^{1+s} N_{3}^{1+s} N_{4}^{1+s}}\lesssim N_1^{-\frac{1}{2}}N_2^{-\frac{1}{2}} N_3^{-\frac{1}{2}} N_4^{-\frac{1}{2}}.
 	\end{align*}
 	Then, by Bernstein inequality \eqref{Bs}, we have 
 	\begin{align*}
 	\left|\Lambda_4\left(\sigma_4\right) \right|&\lesssim  \sum\limits_{N_1\geq N_2\geq N_3\geq N_4\geq 1}N_1^{-\frac{1}{2}}N_2^{-\frac{1}{2}} N_3^{-\frac{1}{2}} N_4^{-\frac{1}{2}}   \left\| u_{N_1}\right\|_{L_x^2} \left\| u_{N_2}\right\|_{L_x^2} \left\| u_{N_3}\right\|_{L_x^\infty} \left\| u_{N_4}\right\|_{L_x^\infty}\\
 	&\lesssim  \sum\limits_{N_1\geq N_2\geq N_3\geq N_4\geq 1}N_1^{-\frac{1}{2}}N_2^{-\frac{1}{2}} \left\|u_1 \right\|_{L_x^2} \left\|u_2 \right\|_{L_x^2} \left\|u_3 \right\|_{L_x^2} \left\|u_4 \right\|_{L_x^2}\\
 	&\lesssim  \prod\limits_{j=1}^4 \left\|u_j \right\|_{L_x^2}.
 	\end{align*}
 	
 	\noindent
 	This completes the proof of Lemma \ref{LEM:cor}.
 \end{proof}
 
 \subsection{Well-posedness of $I$-system}
 Before we construct a global solution, we show the following modified local well-posedness.
 \begin{prop}
 	Consider the initial value problem
 	\begin{align*}
 	\begin{cases}
 	i\partial_t Iu+I\partial_x^4u=I(|u|^2u) \\ Iu(0,x)=Ig(x)\in L^2\left(\mathbb{R}\right).
 	\end{cases}
 	\end{align*}
 	Then, the initial value problem is locally well-posed (in $L^2\left(\mathbb{R}\right)$) on an interval $[-\delta,\delta]$ with $\delta\sim \| Ig\|_{L^2}^{-\alpha}$ for some $\alpha>0$ and satisfies the bound
 	\begin{align*}
 	\left\| I (\eta u)  \right\|_{X^{0,\frac{1}{2}+}}\lesssim \left\| Ig  \right\|_{L_x^2},    
 	\end{align*}
 	
 	\noindent	
 	where $\eta$ is a smooth time cutoff function which satisfies $\eta =1 $ on $[-\delta,\delta]$.
 \end{prop}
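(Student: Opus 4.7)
The plan is to rerun the Bourgain-space contraction argument of Theorem~\ref{thm: local well-posedness} on the $I$-regularized equation. Applying $I$ to the Duhamel form of \eqref{eqn:fourth order NLS} and truncating in time with the cutoff $\eta$ of Section~\ref{sec: local well-posedness}, one arrives at the map
\begin{align*}
\Phi(u)(t) = \eta(t) e^{it\partial_x^4} g \mp \eta(t) \int_0^t e^{i(t-t')\partial_x^4} \eta_\delta(t') (|u|^2 u)(t') \, dt',
\end{align*}
and the aim is to show $\Phi$ is a contraction on the ball $\mathcal{B} = \bigl\{u : \|Iu\|_{X^{0,1/2+}_{\tau=-\xi^4}} \leq 2 \|Ig\|_{L^2}\bigr\}$.

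The only new ingredient beyond Section~\ref{sec: local well-posedness} is the \emph{modified trilinear estimate}
\begin{align*}
\|I(u_1 \bar{u}_2 u_3)\|_{X^{0,b-1}_{\tau=-\xi^4}} \lesssim \|Iu_1\|_{X^{0,b}_{\tau=-\xi^4}} \|Iu_2\|_{X^{0,b}_{\tau=-\xi^4}} \|Iu_3\|_{X^{0,b}_{\tau=-\xi^4}},
\end{align*}
valid for $1/2 < b \leq 5/6$. After duality and Plancherel, substituting $v_j = Iu_j$, this reduces to bounding the same multilinear convolution as in the proof of \eqref{eqn:trilinear estimate}, but with the multiplier $m(\xi_4)/\bigl(m(\xi_1) m(\xi_2) m(\xi_3)\bigr)$ in place of $\langle\xi_1\rangle^{1/2}\langle\xi_2\rangle^{1/2}\langle\xi_3\rangle^{1/2}/\langle\xi_4\rangle^{1/2}$. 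Since $m(\xi) \sim 1$ for $|\xi| \lesssim N$ and $m(\xi) \sim (N/|\xi|)^{1/2}$ for $|\xi| \gtrsim N$, a dyadic inspection shows that the new multiplier is pointwise dominated (up to a constant independent of $N$) by the $s = -1/2$ multiplier in each of the subcases of Section~\ref{sec: local well-posedness}; the powers of $N$ always cancel in a favorable direction. This is the standard interpolation lemma of Colliander--Keel--Staffilani--Takaoka--Tao \cite{CKSTT2003}: a trilinear estimate at the endpoint $s = -1/2$ upgrades to the analogous estimate with $I$ outside the nonlinearity and $Iu_j$ on the right-hand side.

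Combining this modified trilinear estimate with the energy estimate \eqref{eqn:X^{s,b} energy estimate} and the time-localization bound \eqref{eqn: X^{s,b} time parametor} yields
\begin{align*}
\|I\Phi(u)\|_{X^{0,b}_{\tau=-\xi^4}} \lesssim \|Ig\|_{L^2} + \delta^{\alpha_0} \|Iu\|_{X^{0,b}_{\tau=-\xi^4}}^3
\end{align*}
for some $\alpha_0 > 0$ produced by \eqref{eqn: X^{s,b} time parametor}, together with an analogous difference estimate. Choosing $\delta \sim \|Ig\|_{L^2}^{-\alpha}$ with $\alpha = 2/\alpha_0$ forces $\delta^{\alpha_0}\|Ig\|_{L^2}^2 \ll 1$ and thereby makes $\Phi$ a strict contraction on $\mathcal{B}$, yielding the local existence on $[-\delta, \delta]$ together with the bound $\|Iu\|_{X^{0,1/2+}_{\tau=-\xi^4}} \lesssim \|Ig\|_{L^2}$. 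The main obstacle is the case-by-case verification that the $I$-multiplier is controlled by the $s = -1/2$ multiplier in each of the four regimes of Section~\ref{sec: local well-posedness}; however, because $m$ is slowly varying and monotone on dyadic shells, this amounts to routine bookkeeping once the original trilinear estimate has been established.
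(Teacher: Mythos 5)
Your proposal is correct and follows essentially the same route as the paper: the paper also reduces the modified trilinear estimate, after duality and the substitution $v_j=Iu_j$, to the already-proven $s=-\tfrac12$ trilinear estimate by showing the pointwise multiplier bound $\frac{m(\xi_4)\langle\xi_4\rangle^{1/2}}{m(\xi_1)\langle\xi_1\rangle^{1/2}m(\xi_2)\langle\xi_2\rangle^{1/2}m(\xi_3)\langle\xi_3\rangle^{1/2}}\lesssim 1$, using that $m(\xi)\langle\xi\rangle^{1/2}$ is increasing and bounded below, which is exactly the favorable cancellation of powers of $N$ you describe via the CKSTT interpolation argument. The contraction step with \eqref{eqn:X^{s,b} energy estimate} and \eqref{eqn: X^{s,b} time parametor} is likewise the same.
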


 \begin{proof}
 	Let us suppress the smooth time cut-off function $\eta $ from $\eta u$ and simply denote them by $u$.
 	It suffices to show the following trilinear estimate
 	\begin{align*}
 	\Vert I(\abs{u}^2u) \Vert_{X^{0,-\frac{1}{2}+}}\lesssim \Vert Iu \Vert_{X^{0,\frac{1}{2}+}}^3.
 	\end{align*}	
 	Note that 
 	\begin{align*}
 	\Vert I(\abs{u}^2u) \Vert_{X^{0,-\frac{1}{2}+}}=&\sup\limits_{\| v\|_{X^{0,1-b}}=1}\left| \int_{\mathbb{R}\times\mathbb{R}}I(u\bar{u}u) v\,dxdt \right|\\
 	=&\sup\limits_{\| v\|_{X^{0,\frac{1}{2}- }}=1}\left| \int_{\substack {\xi_1+\dots \xi_4=0\\ \tau_1+\dots \tau_4=0}} m(\xi_4) \widehat{u_1}\left(\tau_1,\xi_1 \right) \widehat{\bar{u_2}}\left(\tau_2,\xi_2 \right)\widehat{u_3}\left(\tau_3,\xi_3\right)
 	\widehat{v}\left(\tau_4,\xi_4 \right)\right|.
 	\end{align*}
 	Now set
 	\begin{align*}
 	&f_1\left(\tau_1,\xi_1\right)=\left| \widehat{u_1}\left(\tau_1,\xi_1\right)\right|  \left \langle \tau_1+\xi_1^4 \right\rangle^{\frac{1}{2}+}m(\xi_1),\\
 	&f_2\left(\tau_2,\xi_2\right)=\left| \widehat{\overline{u_2}}\left(\tau_2,\xi_2\right)\right|  \left \langle \tau_2-\xi_2^4 \right\rangle^{\frac{1}{2}+}m(\xi_2),\\ 
 	&f_3\left(\tau_3,\xi_3\right)=\left| \widehat{u_1}\left(\tau_3,\xi_3\right)\right| \left \langle \tau_3+\xi_3^4 \right\rangle^{\frac{1}{2}+}m(\xi_3),\\
 	&f_4\left(\tau_4,\xi_4\right)=\left| \widehat{v}\left(\tau_4,\xi_4\right)\right|  \left \langle \tau_4-\xi_4^4 \right\rangle^{\frac{1}{2}-}.   
 	\end{align*}
 	Therefore, it suffices to show that for nonnegative $L^2$ functions $f_j$  
 	\begin{align*}
 	\int\limits_{\substack{\xi_1+\dots+\xi_4=0 \\ \tau_1+\dots+\tau_4=0}}\frac{m(\xi_4)\left\langle \tau_4-\xi_4^4 \right\rangle^{-\frac{1}{2}+ }}{m(\xi_1)m(\xi_2)m(\xi_3) \left\langle \tau_1+\xi_1^4 \right\rangle^{\frac{1}{2}+}\left\langle \tau_2-\xi_2^4 \right\rangle^{\frac{1}{2}+}\left\langle \tau_3+\xi_3^4 \right\rangle^{\frac{1}{2}+}}\prod\limits_{j=1}^{4}f_j\left(\tau_j,\xi_j\right)\\
 	\lesssim \prod\limits_{j=1}^{4}\|f_j \|_{L^2_{\tau,\xi}}.
 	\end{align*}
 	Note that $m(\xi)\langle \xi \rangle^{\frac{1}{2}}$ is increasing in $\xi$ and $m(\xi)\langle \xi \rangle^{\frac{1}{2}}\gtrsim 1 $ for all $ \xi \in \mathbb{R}$. By symmetry, we may assume that $\abs{\xi_1}\geq \abs{\xi_2}\geq \abs{\xi_3}$. Also, $\xi_1+\dots +\xi_4=0 $ implies $\abs{\xi_{\max}}\sim \abs{\xi_{sub}} $ and $\abs{\xi_1}\gtrsim \abs{\xi_4}$.
 	Note that
 	\begin{align*}
 	\frac{m(\xi_4) \langle \xi_4 \rangle^{\frac{1}{2}} }{m(\xi_1)\langle \xi_1 \rangle^{\frac{1}{2}}  m(\xi_2)\langle \xi_2 \rangle^{\frac{1}{2}}  m(\xi_3)\langle \xi_3 \rangle^{\frac{1}{2}}  } &\lesssim \frac{1}{m(\xi_2)\langle \xi_2 \rangle^{\frac{1}{2}} m(\xi_3)\langle \xi_3 \rangle^{\frac{1}{2}} }
 	\\ &\lesssim \; 1.
 	\end{align*}
 	Therefore, it is enough to show that
 	\begin{align*}
 	\int\limits_{\substack{\xi_1+\dots+\xi_4=0 \\ \tau_1+\dots+\tau_4=0}} \frac{\left\langle \xi_1 \right\rangle^{\frac{1}{2}} \left\langle \xi_2 \right\rangle^{\frac{1}{2}}  \left\langle \xi_3 \right\rangle^{\frac{1}{2}} \left\langle \tau_4-\xi_4^4 \right \rangle^{-\frac{1}{2}+ }  }{\langle \xi_4 \rangle^{\frac{1}{2}} \left\langle \tau_1+\xi_1^4 \right\rangle^{\frac{1}{2}+}\left\langle \tau_2-\xi_2^4 \right\rangle^{\frac{1}{2}+}\left\langle \tau_3+\xi_3^4 \right\rangle^{\frac{1}{2}+}}\prod\limits_{j=1}^4 f_j\left(\tau_j,\xi_j\right)\\
 	\lesssim \prod\limits_{j=1}^{4}\|f_j \|_{L^2_{\tau,\xi}}.
 	\end{align*}
 	Hence, the trilinear estimate follows directly by just repeating the proof of Proposition \ref{prop:t1}.
 \end{proof}

 \subsection{Proof of global well-posedness for \eqref{eqn:fourth order NLS}}
 In this subsection, we prove global well-posedness of \eqref{eqn:fourth order NLS}. For any given $u_0\in H^{s}, s\geq -\frac{1}{2}$ and $T>0$, our goal is to construct a solution on $[0,T]$. Suppose that $u$ is a solution to \eqref{eqn:fourth order NLS} with initial data $u_0$. Then, for any $\lambda>0$, $u_{\lambda}(x,t)=\lambda^{-2}u(\frac{x}{\lambda},\frac{t}{\lambda^4})$
 is also a solution to \eqref{eqn:fourth order NLS} with initial data $u_{0,\lambda}=\lambda^{-2}u_0(\frac{x}{\lambda})$. Recall that 
 \begin{align*}
 \| u\|_{H^s}\lesssim \| Iu \|_{L^2}\lesssim N^{-s} \| u\|_{H^s}.   
 \end{align*}
 Straightforward calculation shows 
 \begin{align*}
 \Vert Iu_{0,\lambda} \Vert_{L^2} \lesssim \lambda^{-\frac{3}{2}-s}N^{-s}\Vert u_0 \Vert_{H^s}.
 \end{align*} 
 The parameter $N$ will be chosen later but we take $\lambda $ now such that
 \begin{align*}
 \lambda^{-\frac{3}{2}-s}N^{-s}\left\| u_0\right\|_{H^s}=\varepsilon_0<1 \quad \Rightarrow \quad \lambda\sim N^{-\frac{2s}{3+2s}} .    
 \end{align*} 
 For simplicity of notations, we still denote $u_\lambda$ by $u, u_{0,\lambda} $ by $u_0$ and assume $\Vert Iu_0 \Vert_{L^2}\leq \varepsilon_0 $. The goal is to construct solutions on $[0,\lambda^4T]$. We already have the local solution on $[0,1]$ and need to extend the solution on $[0,\lambda^4T]$. It suffices to control the modified energy $E_I^2(t)=\Vert Iu \Vert_{L^2}^2$. 
 
 First, we control $E_I^2(t)$ for $t \in [0,1]$. By a standard bootstraping argument, we may assume $E_I^2(t)<4\varepsilon_0^2$ for $t\in [0,1]$. By Lemma \ref{LEM:cor}, we know that modified energies $E_I^2(t)$ and $E_I^4(t)$ are very close, i.e
 \begin{align*}
 E_I^4(0)=E_I^2(0)+O(\varepsilon_0^4),
 \end{align*}
 and
 \begin{align*}
 E_I^4(t)=E_I^2(t)+O(\varepsilon_0^4).
 \end{align*}  
 for $t\in [0,1]$.
 
 Also, it follows from Lemma \ref{lem:almost conservation law} that for all $t\in [0,1]$, we have
 \begin{align*}
 E_I^4(t)\leq E_I^4(0)+C\varepsilon_0^6N^{-3}.
 \end{align*}
 Then, our rescaled solution satisfies
 \begin{align*}
 \Vert u(1) \Vert_{H^s}^2\leq \Vert Iu(1) \Vert_{L^2}^2=E_I^4(1)+O(\varepsilon_0^4)\leq& E_I^4(0)+C\varepsilon_0^6N^{-3}+O(\varepsilon_0^4)\\
 \leq& \varepsilon_0^2+C\varepsilon_0^6N^{-3}+O(\varepsilon_0^4)\\
 <&4\varepsilon_0^2. 
 \end{align*}
 
 Now it suffices to do an iteration. We now consider the initial value problem for \eqref{eqn:fourth order NLS} with initial data $u(1)$. By the above bound $\Vert Iu(1) \Vert_{L^2}^2<4\varepsilon_0^2$, the local solution extend on $[0,2]$. By iterating this procedure $M$ steps, we obtain 
 \begin{align*}
 E_I^4(t)\leq E_I^4(0)+MC\varepsilon_0^6N^{-3}.
 \end{align*}
 for all $t \in [0,M+1]$. As long as $MN^{-3}\sim 1 $, we have the bound
 \begin{align*}
 \Vert u(M) \Vert_{H^s}^2\leq \Vert Iu(M) \Vert_{L^2}^2=&E_I^4(t)+O(\varepsilon_0^4)\\
 \leq& \varepsilon_0^2+O(\varepsilon_0^4)+CM\varepsilon_0^6N^{-3}<4\varepsilon_0^2, 
 \end{align*}
 and the lifetime span of the local result remains uniformly of size 1.
 We choose $M \sim N^{3}$. This process extends the local solution to the time interval $[0,N^{3}]$. Take $N$ such that 
 
 \begin{align*}
 N^{3}\sim \lambda^4T \sim N^{-\frac{8s}{3+2s}}T \quad \text{or} \quad N^{3+\frac{8s}{3+2s}}\sim T,    
 \end{align*}
 which is certainly done for $s\geq-\frac{1}{2}$. Therefore, the solution $u$ is extended on $[0,\lambda^4T]$ for arbitrary fixed time $T$. This completes the proof of global well-posedness for \eqref{eqn:fourth order NLS} in $H^s(\mathbb{R}), s\geq -\frac{1}{2} $.
 
 In the end of this section, we prove some properties of the global solution. By rescaling of our global solution, we get 
 \begin{align*}
 \sup\limits_{[0,T]}\Vert u(t) \Vert_{H^s}\sim \lambda^{\frac{3}{2}+s}\sup\limits_{t\in [0,\lambda^4 T]}\Vert u_{\lambda}(t) \Vert_{H_x^s} \leq \lambda^{\frac{3}{2}+s}\sup\limits_{t\in[0,\lambda^4T]} \Vert Iu_{\lambda}(t) \Vert_{L^2},
 \end{align*}
 and
 \begin{align*}
 \Vert Iu_{0,\lambda} \Vert_{L^2}\lesssim N^{-s} \Vert u_{0,\lambda} \Vert_{H^s} \sim N^{-s}\lambda^{-\frac{3}{2}-s}\Vert u_0 \Vert_{H^s}.
 \end{align*}
 From the above local well-posedness iteration argument, we have
 \begin{align*}
 \sup\limits_{t\in [0,\lambda^4T]}\Vert Iu_{\lambda}(t) \Vert_{L^2} \lesssim \Vert Iu_{0,\lambda} \Vert_{L^2},
 \end{align*}
 and hence
 \begin{align*}
 \sup\limits_{t\in [0,T]} \Vert u(t) \Vert_{H^s} \lesssim N^{-s}\Vert u_0 \Vert_{H^s}.
 \end{align*}
 Recall that the parameter $\lambda $ was chosen such that $\Vert Iu_{0,\lambda} \Vert_{L^2}\sim \varepsilon_0$. Thus we get $\lambda\sim N^{-\frac{2s}{3+2s}}$. Also the parameter $N$ is chosen such that $N^{3}\sim \lambda^4 T$ or $ N^{\frac{14s+9}{3+2s}}\sim T$. This shows that the selection of $N$ is polynomial in $T$. So this gives a polynomial growth bound on $\Vert u(t) \Vert_{H^s}$
 \begin{align*}
 \Vert u(t) \Vert_{H^{-\frac{1}{2}}}\lesssim t^{\frac{1}{2}}\Vert u_0 \Vert_{H^{-\frac{1}{2}}}.
 \end{align*}
 
 %\begin{align*}
 %\Vert u(t) \Vert_{H^{-\frac{1}{2}+}}\lesssim t^{\frac{1}{2}+}\Vert u_0 \Vert_{H^{-\frac{1}{2}+}}. 
 %\end{align*}

 %-------------------------------------------------

 \section{Mild ill-posedness}\label{sec:ill-posedness}
 In this section we give the proof of Theorem $\ref{thm: ill-posedness}$. We follow the argument
 introduced by Christ-Colliander-Tao \cite{CCT2003}.  In \cite{CCT2003}, Christ-Colliander-Tao established two solutions to \eqref{eqn: cubic NLS,focusing defocusing}  breaking the uniform continuity of the flow map for $s<0$. The method presented in Christ-Colliander-Tao \cite{CCT2003} can be applied to both focusing and defocusing cases. They used the Galilean and scale invariances to construct the counter example. For the mKdV case, instead of using Galilean invariance, they used the approximate solution to the mKdV equation by exploiting the solution of \eqref{eqn: cubic NLS,focusing defocusing}.
 Likewise, we approximate \eqref{eqn:fourth order NLS} by \eqref{eqn: cubic NLS,focusing defocusing} to prove mild ill-posedness.

 \subsection{Mild ill-posedness result for the cubic NLS}
 First, we state the ill-posedness result for \eqref{eqn: cubic NLS,focusing defocusing} in \cite{CCT2003}.
 \begin{theorem}[\cite{CCT2003}]\label{thm: ill-posedness defocsuing NLS}
 	The solution map of the cubic NLS  in $H^s$ for $s<0$ fails to be uniformly continuous. More precisely, there exists $\varepsilon_0>0$ such that for any $\delta>0,T>0$ and $\varepsilon<\varepsilon_0$, there are two solutions $u_1,u_2$ to $\eqref{eqn: defocusing cubic NLS}$ such that
 	\begin{align}
 	\Vert u_1(0)\Vert_{H^s},\Vert u_2(0) \Vert_{H^s}&\lesssim \varepsilon, \label{ean: ill posedness small data}\\
 	\Vert u_1(0)-u_2(0)\Vert_{H^s}&\lesssim \delta,\label{eqn: ill posedness small difference} \\
 	\sup\limits_{0\leq t <T}\Vert u_1(t)-u_2(t) \Vert_{H^s}&\gtrsim \varepsilon. \label{eqn: ill-posedness difference lower bound}
 	\end{align}		 
 \end{theorem}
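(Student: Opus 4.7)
The plan is to follow the argument of Christ--Colliander--Tao, exploiting the Galilean invariance of the cubic NLS together with the amplitude-dependent nonlinear phase rotation of plane-wave solutions. The building block is that $u_a(t) := a\, e^{\mp i|a|^2 t}$ is an exact spatially-homogeneous solution to $i\partial_t u - \partial_x^2 u = \pm |u|^2 u$, whose phase rotates at rate $|a|^2$. After a Galilean boost to frequency $N$ and spatial localization by a Schwartz bump $\phi$ at scale $R \gg 1$, this yields the approximate solution
\begin{align*}
u_{a,N}(t,x) := a\, \phi\!\left(\tfrac{x+2Nt}{R}\right) e^{i(Nx + (N^2 \mp |a|^2)t)},
\end{align*}
whose Fourier mass is concentrated near $\xi = N$. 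A direct computation, which crucially uses that the envelope moves at the group velocity $-2N$ associated to $\omega(\xi) = -\xi^2$, gives
\begin{align*}
(i\partial_t - \partial_x^2) u_{a,N} \mp |u_{a,N}|^2 u_{a,N} = a\, e^{i\theta}\Bigl(\pm|a|^2 \phi(1-\phi^2) - R^{-2}\phi''\Bigr),
\end{align*}
with both $\phi$ and $\phi''$ evaluated at $(x+2Nt)/R$. Thus the transport error cancels exactly and the remainder is supported in the transition region of $\phi$ plus a dispersive term of order $R^{-2}$.

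Next, the size computation $\|u_{a,N}(0)\|_{H^s} \sim |a|\, R^{1/2}\, N^s$ shows that fixing $R$ and choosing $|a| \sim \epsilon\, R^{-1/2} N^{-s}$ places the initial data at scale $\epsilon$; since $s < 0$, the pointwise amplitude $|a|$ diverges as $N \to \infty$, making the nonlinear phase rotation rate $|a|^2$ unbounded. To exhibit the divergence, I would pick two parameters $a_j = A_j R^{-1/2} N^{-s}$ with $|A_1|,|A_2| \sim \epsilon$ and $|A_1 - A_2| \sim \delta$, so that $\|u_{a_1,N}(0) - u_{a_2,N}(0)\|_{H^s} \sim \delta$. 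The phase separation rate is
\begin{align*}
\bigl| |a_1|^2 - |a_2|^2 \bigr| \sim \epsilon\, \delta\, N^{-2s},
\end{align*}
so at time $t^* \sim \pi\, N^{2s}/(\epsilon\delta)$ the two approximate solutions become anti-phase on the flat region of $\phi$, forcing $\|u_{a_1,N}(t^*) - u_{a_2,N}(t^*)\|_{H^s} \gtrsim \epsilon$. Since $s < 0$ we have $t^* \to 0$ as $N \to \infty$, so one can arrange $t^* < T$ for any prescribed $T$, producing \eqref{ean: ill posedness small data}, \eqref{eqn: ill posedness small difference}, \eqref{eqn: ill-posedness difference lower bound}.

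The main obstacle is the approximation step: one must show that the genuine NLS solutions $u_j$ with initial data $u_{a_j,N}(0,\cdot)$ track the approximate solutions $u_{a_j,N}$ in $H^s$ throughout $[0,t^*]$, despite $|a| \to \infty$ placing us outside the small-data regime. The saving is that the accumulated nonlinear time $|a|^2\, t^* \sim \epsilon/\delta$ is bounded: pulling back through the Galilean boost and working in the moving frame $y = x+2Nt$, the equation reduces to a slowly modulated perturbation of the ODE $i\partial_t w = \pm|w|^2 w$, and a Strichartz-based stability argument with $R$ chosen large (but independent of $N$) shows the perturbation error is $o(\epsilon)$ in $H^s$. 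This is the technical core of \cite{CCT2003}.
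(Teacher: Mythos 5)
The paper does not actually prove this theorem: it is imported wholesale from \cite{CCT2003}, and the only ingredients used downstream are the properties of the exact solutions $u^{\langle aw\rangle}$ recorded in Remark \ref{rem : u^{aw} remark}, to which scaling and the Galilean boost are applied. Your overall strategy --- amplitude-dependent nonlinear phase rotation, Galilean boost to frequency $N$, and the observation that the decoherence time $t^{*}\sim N^{2s}/(\epsilon\delta)$ tends to $0$ for $s<0$ --- is indeed the Christ--Colliander--Tao strategy, and the $H^{s}$ size computations are consistent with Lemma \ref{lem: ill-posedness modulation}. The problem is the specific ansatz and the claim that its error is perturbative.

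Your approximate solution rotates its phase at the \emph{constant} rate $|a|^{2}$, whereas the true solution rotates locally at the rate $|u|^{2}=|a|^{2}\phi^{2}$. Consequently the error term $\pm a|a|^{2}\phi(1-\phi^{2})e^{i\theta}$ has amplitude $|a|^{3}$ on the transition region of $\phi$ --- the same size as the cubic nonlinearity itself --- and it does not decay in any parameter: both $\|u_{a,N}\|_{L^{2}_{x}}$ and the $L^{2}_{x}$ norm of this error scale like $R^{1/2}$, so their ratio is $\sim|a|^{2}$ per unit time independently of $R$ and $N$. Integrated over $[0,t^{*}]$ this gives a relative error $|a|^{2}t^{*}\sim\epsilon/\delta\gg1$, so no perturbation argument around your $u_{a,N}$ can close on the decoherence interval, and ``choosing $R$ large but independent of $N$'' does not repair it. The correct ansatz is the zero-dispersion (ODE) profile
\[
a\,\phi(y)\,e^{i(Nx+N^{2}t)}\,e^{\mp i|a|^{2}\phi(y)^{2}t},\qquad y=\frac{x+2Nt}{R},
\]
for which the transport, carrier-phase and nonlinear terms cancel \emph{identically}, leaving only the dispersive error $\partial_{x}^{2}$ of the profile; that error is genuinely small for large $N$, but it involves $y$-derivatives of the oscillating factor that grow like $|a|^{2}t$, so the approximation is only valid on a bounded window of ``ODE time'' $|a|^{2}t$, and reconciling this window with the decoherence time $|a|^{2}t^{*}\sim\epsilon/\delta$ is precisely the small-dispersion analysis that constitutes the technical core of \cite{CCT2003}. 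This is also why the paper states the input as a $\limsup_{t\to\infty}$ lower bound for \emph{exact} solutions $u^{\langle aw\rangle}$ rather than for truncated plane waves. As written, your proof has a genuine gap at the stability step, and the gap cannot be closed for the ansatz you chose.
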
 
 This implies the solution map is not uniformly continuous from the ball 
 \\$\left\{u_0 \in H_x^s : \Vert u_0 \Vert_{H_x^s} \lesssim \varepsilon \right\}$ to $C_t^\infty\left([0,T];H_x^s \right)$.
 
 %The following remark will be used in the proof of Theorme \ref{thm: ill-posedness}.
 %\begin{remark}\label{rem : u^{aw} remark}
 %	In \cite{CCT2003}, Christ-Colliander-Tao constructed the global solution $u^{\langle aw \rangle}$ to the cubic NLS $\eqref{eqn: cubic NLS,focusing defocusing}$ for all $a\in [1/2,2]$, where $w(x)=\varepsilon \exp(-x^2)$ for some parameter $0<\varepsilon \ll 1$ such that 
 %	\begin{align}
 %		\sup\limits_{0\leq t <\infty}\Vert u^{\langle aw \rangle}(t) \Vert_{H_x^K}&\lesssim \varepsilon,\label{eqn: ill-posedness H_x^5 bound} \\
 %		\Vert u^{\langle aw \rangle}(0)-u^{\langle a'w \rangle}(0) \Vert_{H_x^K}&\lesssim \varepsilon \vert a-a' \vert,\label{eqn: ill-posedness H_x^5 difference} \\
 %		\limsup\limits_{t \to +\infty} \Vert u^{\langle aw \rangle}(t)-u^{\langle a'w \rangle}(t) \Vert_{L_x^2}&\gtrsim 1. \label{eqn: ill-posedness limsup L^2}
 %	\end{align}
 %	for sufficiently large integer $K\geq 5$.
 %	The details of solutions $u^{\langle aw \rangle}$ are presented in \cite{CCT2003}.
 %	The Galilean invariances and scale symmetry can be used to transform such solutions $u^{\langle aw \rangle}$ into solutions $u_1,u_2$ in Theorem \ref{thm: ill-posedness defocsuing NLS}. They also used these NLS solutions $u^{\langle aw \rangle}$ to establish approximate solution of the mKdV equation to show mild ill-posedness of the mKdV equation.
 %\end{remark}

 \begin{remark}
 	In the case of focusing cubic NLS, there is another way to show mild ill-posedness. Kenig-Ponce-Vega \cite{KPV2001} demonstrated mild ill-posedness of the focusing cubic NLS in the sense that the solution map fails to be locally uniformly continuous for $s<0$. They used the Galilean invariance on the soliton solutions to obtain mild ill-posedness result. Recall that if $u(t,x)$ is a solution of the cubic NLS with initial data $u_0$, then $u_N(t,x)=e^{iNx}e^{-itN^2}u(t,x-2tN)$ 
 	is another solution with initial data $u_N(0,x)=e^{iNx}u_0(x)$. Observe that for $s<0$, we have
 	\begin{align}\label{eqn:e^{iNx} to zero}
 	\lim\limits_{N\to \infty}\Vert u_N(0,x) \Vert_{H^s\left(\mathbb{R}\right)}=0.
 	\end{align}
 	By applying the Galilean symmetry to the soliton soution $u$, we consider the solutions $u_{N_1},u_{N_2}$. By using $\eqref{eqn:e^{iNx} to zero}$,  we can make $\Vert u_{N_1}(0,\cdot)-u_{N_2}(0,\cdot)\Vert_{H_x^s}$ sufficiently small by choosing $N_1$ and $N_2$ large enough. Notice that $u_{N_1},u_{N_2}$ move with different speeds. Therefore, the difference $\Vert u_{N_1}(t)-u_{N_2}(t)\Vert_{H_x^s}$ is bounded below by some fixed constant.
 \end{remark} 
 
 \subsection{Approximate solution}
 %\subsection{Mild ill-posedness for \eqref{eqn:fourth order NLS}}
 In the following, we only consider defocusing \eqref{eqn:fourth order NLS}. 
 The same argument can be applied in the focusing case.
 First, we find the approximate solution to defocusing \eqref{eqn:fourth order NLS} by using the solution of the cubic NLS
 \begin{align}\label{eqn: defocusing cubic NLS}
 i\partial_tu=\partial_x^2u-\vert u \vert^2u.
 \end{align}
 Assume that $u(s,y)$ solves \eqref{eqn: defocusing cubic NLS}. We use the following change of variable
 \begin{align*}
 \left(s,y\right):=\left(t,\frac{x+4N^3t}{\sqrt{6}N} \right).
 \end{align*}
 Then, we define the approximate solution
 \begin{align*}
 U_{ap}(t,x):=e^{iN^4t}e^{iNx}u(s,y),
 \end{align*}
 where $N \gg 1$ will be chosen later.
 
 We want to show that $U_{ap}$ is an approximate solution to the defocusing \eqref{eqn:fourth order NLS}. A straightforward calculation shows that
 \begin{align*}
 i\partial_tU_{ap}(t,x)=&ie^{iNx}\left(iN^4\right)e^{iN^4t}u\left(t,\frac{x}{\sqrt{6}N}+\frac{4N^2}{\sqrt{6}}t\right)+ie^{iNx}e^{iN^4t}\partial_su\left(t, \frac{x}{\sqrt{6}N}+\frac{4N^2}{\sqrt{6}}t\right)\\
 +&ie^{iNx}e^{iN^4t}\partial_yu\left(t, \frac{x}{\sqrt{6}N}+\frac{4N^2}{\sqrt{6}}t  \right)\frac{4N^2}{\sqrt{6}},\\
 \partial_x^4U_{ap}(t,x)=&\left(iN\right)^4e^{iNx}e^{iN^4t}u(s,y)+e^{iNx}e^{iN^4t}\partial_y^4u(s,y)\left(\frac{1}{\sqrt{6}N}\right)^4\\
 +& {4 \choose 1}\left(iN\right)e^{iNx}e^{iN^4t}\partial_y^3u(s,y)\left(\frac{1}{\sqrt{6}N} \right)^3 + {4\choose 2}\left( iN\right)^2e^{iNx}e^{iN^4t}\partial_y^2u(s,y)\left(\frac{1}{\sqrt{6}N} \right)^2\\
 +&{4\choose3}\left(iN\right)^3e^{iNx}e^{iN^4t}\partial_yu(s,y)\frac{1}{\sqrt{6}N}
 \end{align*}
 and hence we have                
 \begin{align*}
 \left(i\partial_t-\partial_x^4 \right)U_{ap}(t,x)=&e^{iNx}e^{iN^4t}\left(i\partial_su(s,y)-\partial_y^2u(s,y)  \right)\\
 +&\frac{1}{36}N^{-4}e^{iNx}e^{iN^4t}\partial_y^4u(s,y)+\frac{4i}{6^{\frac{3}{2}}}N^{-2}e^{iNx}e^{iN^4t}\partial_y^3u(s,y).
 \end{align*}
 Moreover, we note that 
 \begin{align*}  
 \vert U_{ap}(t,x) \vert^2 U_{ap}(t,x)=&\vert u (s,y) \vert^2 e^{iNx}e^{iN^4t}u(s,y).
 \end{align*}
 Since $u$ is a solution of $\eqref{eqn: defocusing cubic NLS}$, we obtain 
 \begin{align*}
 &\left(i\partial_t-\partial_x^4 \right)U_{ap}(t,x)-\vert U_{ap}(t,x) \vert^2 U_{ap}(t,x)\\
 &=e^{iNx}e^{iN^4t}\left(i\partial_su(s,y)-\partial_y^2u(s,y)  +\vert u(s,y) \vert^2 u(s,y) \right)\\
 &+\frac{1}{36}N^{-4}e^{iNx}e^{iN^4t}\partial_y^4u(s,y)+\frac{4i}{6^{\frac{3}{2}}}N^{-2}e^{iNx}e^{iN^4t}\partial_y^3u(s,y)\\
 &=\frac{1}{36}N^{-4}e^{iNx}e^{iN^4t}\partial_y^4u(s,y)+\frac{4i}{6^{\frac{3}{2}}}N^{-2}e^{iNx}e^{iN^4t}\partial_y^3u(s,y)\\
 &=E, 
 \end{align*}
 where the error term $E$ is a linear combination of the expressions
 \begin{align*}
 E_1:=N^{-4}e^{iNx}e^{iN^4t}\partial_y^4u(s,y),\\
 E_2:=N^{-2}e^{iNx}e^{iN^4t}\partial_y^3u(s,y).
 \end{align*}
 
 \subsection{Error estimate}
 In this subsection, we prove the following error estimates for $E_1$ and $E_2$.
 \begin{lemma}\label{lem : ill-posedness error estimate}
 	For each $j=1,2$, let $e_j$ be the solution to the initial value problem
 	\begin{align*}
 	\left(\partial_t-\partial_x^4\right)e_j=E_j, \quad e_j(0)=0.
 	\end{align*}	
 	Let $\eta(t)$ be a smooth time cut-off function taking value 1 near the origin and compactly supported. Then, we have
 	\begin{align*}
 	\Vert \eta(t) e_j \Vert_{X^{-\frac{1}{2},b}}\lesssim \varepsilon N^{-2}.
 	\end{align*}	
 \end{lemma}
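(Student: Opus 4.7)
The plan is to invoke the inhomogeneous $X^{s,b}$ energy estimate \eqref{eqn:X^{s,b} energy estimate}, which, together with $e_j(0)=0$, reduces the claim to proving
\begin{align*}
\|\eta_0 E_j\|_{X^{-1/2, b-1}} \lesssim \epsilon N^{-2}
\end{align*}
for an auxiliary time cutoff $\eta_0$ that equals $1$ on the support of $\eta$. Since $b\in(1/2,5/6)$ forces $b-1<0$, the Bourgain weight $\langle\tau+\xi^4\rangle^{b-1}$ is at most one, so by Plancherel in $t$ one has $\|\eta_0 E_j\|_{X^{-1/2, b-1}} \leq \|\eta_0 E_j\|_{L^2_t H^{-1/2}_x}$. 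Thus it is enough to control $\|E_j(t)\|_{H^{-1/2}_x}$ uniformly in $t$.

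Next I would compute the spatial Fourier transform of $E_j(t,\cdot)$ through the change of variable $y=(x+4N^3 t)/(\sqrt 6 N)$. Writing $(k_1,m_1)=(4,4)$ and $(k_2,m_2)=(2,3)$, this gives
\begin{align*}
\widehat{E_j}(t,\xi) = \sqrt 6\,N\cdot N^{-k_j}\, e^{it\phi(\xi)}\,\widehat{\partial_y^{m_j}u}\bigl(t,\sqrt 6\,N(\xi-N)\bigr)
\end{align*}
for some real phase $\phi(\xi)$. Changing variables $\xi' = \sqrt 6\,N(\xi-N)$ in the integral defining $\|E_j(t)\|_{H^{-1/2}_x}^2$ produces the Jacobian $(\sqrt 6\,N)^{-1}$ and yields
\begin{align*}
\|E_j(t)\|_{H^{-1/2}_x}^2 \sim N\cdot N^{-2k_j}\!\int \langle N+\xi'/(\sqrt 6\,N)\rangle^{-1}\,|\widehat{\partial_y^{m_j}u}(t,\xi')|^2\,d\xi'.
\end{align*}

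On the bulk region $|\xi'|\leq N^2/2$ one has $\langle N+\xi'/(\sqrt 6\,N)\rangle\sim N$, so the factor $N^{-1}$ cancels the Jacobian $N$ and the contribution is bounded by $N^{-2k_j}\|u(t)\|_{H^{m_j}}^2\lesssim N^{-2k_j}\epsilon^2$. On the tail $|\xi'|>N^2/2$ the uniform bound $\|u\|_{L^\infty_t H^K_x}\lesssim\epsilon$ for $K\geq 5$, coming from the construction of the underlying NLS solution $u^{\langle aw\rangle}$, furnishes rapid polynomial decay of $\widehat{\partial_y^{m_j}u}$ in $\xi'$, rendering that contribution negligible. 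Altogether $\|E_j(t)\|_{H^{-1/2}_x}\lesssim N^{-k_j}\epsilon$ uniformly in $t$; integrating against $\eta_0$ in time and recalling $k_j\geq 2$ then completes the argument.

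The hard part will be the tail region $|\xi'|>N^2/2$, where $\langle\xi\rangle^{-1}$ no longer supplies the $N^{-1}$ needed to absorb the Jacobian $N$; there one must lean on the high Sobolev smoothness of $u$ to dispose of the loss. The remainder of the proof is routine bookkeeping of powers of $N$ under the change of variables.
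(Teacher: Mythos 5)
Your proposal is correct and follows essentially the same route as the paper: reduce via the inhomogeneous $X^{s,b}$ energy estimate and the trivial bound $\langle\tau+\xi^4\rangle^{b-1}\leq 1$ to a uniform-in-time bound on $\Vert E_j(t)\Vert_{H^{-1/2}_x}$, then exploit the high-frequency modulation structure of $E_j$. The only cosmetic difference is that you re-derive the needed modulation estimate by hand (the bulk/tail split in $\xi'$ is exactly the frequency decomposition in the proof of Lemma \ref{lem: ill-posedness modulation}), whereas the paper simply invokes that lemma with $M=N$, $\tau\sim N$, $A=N^{-4}$ or $N^{-2}$.
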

 For the proof of the above lemma, we need the following lemma.
 \begin{lemma}[\cite{CCT2003}]\label{lem: ill-posedness modulation}
 	Let $\sigma \in \mathbb{R}^{+}$ and $u\in H^{\sigma}\left(\mathbb{R}\right)$. For any $M >1,\tau\in \mathbb{R}^{+},x_0\in \mathbb{R}$ and $A>0$ let
 	\begin{align*}
 	v(x)=Ae^{iMx}u(\left(x-x_0\right)/\tau).
 	\end{align*}	
 	\textup{(i)} Suppose $s\geq 0$. Then, there exists a constant $C_1<\infty $,depending only on $s$, such that whenver $M\tau \geq 1 $,
 	\begin{align*}
 	\Vert v \Vert_{H^s}\leq C_1 \vert A \vert \tau^{\frac{1}{2}}M^s\Vert u \Vert_{H^s}
 	\end{align*}
 	for all $u,A,x_0$.
 	
 	\noindent \textup{(ii)} Suppose that $s<0$ and that $\sigma \geq \abs{s}$. Then there exists a constant $C_1<\infty$ depending only on $s$ and on $\sigma$, such that whenver $1\leq \tau \cdot M^{1+\left(s/\sigma \right)}$,
 	\begin{align*}
 	\Vert v \Vert_{H^s}\leq C_1 \abs{A}\tau^{\frac{1}{2}}M^s\Vert u \Vert_{H^{\sigma}}
 	\end{align*}
 	for all $u,A,x_0$.
 	
 	\noindent\textup{(iii)} There exists $c_1>0$ such that for each $u$, there exists $C_u<\infty $ such that 
 	\begin{align*}
 	\Vert v \Vert_{H^s}\geq c_1 \abs{A}\tau^{\frac{1}{2}}M^s\Vert u \Vert_{L^2}
 	\end{align*}
 	whenever $\tau \cdot M \geq C_u$.	
 \end{lemma}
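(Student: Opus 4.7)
My plan is to reduce all three estimates to a single weighted $L^2$ identity and then extract each bound by pointwise control of the resulting weight. A direct Fourier computation gives $\hat v(\xi) = A\tau\, e^{-i(\xi-M)x_0}\,\hat u(\tau(\xi-M))$, so the substitution $\eta=\tau(\xi-M)$ produces the master formula
\begin{align*}
\|v\|_{H^s}^2 \;=\; |A|^2\,\tau\int_{\mathbb{R}}\langle M+\eta/\tau\rangle^{2s}\,|\hat u(\eta)|^2\,d\eta.
\end{align*}
For part (i), with $s\geq 0$, I would use the assumption $M\tau\geq 1$ to bound $|\eta|/\tau\leq M|\eta|$, hence $\langle M+\eta/\tau\rangle\lesssim M\langle\eta\rangle$. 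Raising to the $2s$-power and integrating against $|\hat u|^2$ yields $\|v\|_{H^s}^2\lesssim |A|^2\tau\,M^{2s}\|u\|_{H^s}^2$, which is the desired bound.

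For part (ii), $s<0$ with $\sigma\geq|s|$, I would split the integration at $|\eta|=M\tau/2$. On $\{|\eta|\leq M\tau/2\}$ we have $M+\eta/\tau\geq M/2$, so because $s<0$, $\langle M+\eta/\tau\rangle^{2s}\lesssim M^{2s}$, giving a contribution $\lesssim M^{2s}\|u\|_{L^2}^2\leq M^{2s}\|u\|_{H^\sigma}^2$. On the tail $\{|\eta|>M\tau/2\}$ I would use $\langle M+\eta/\tau\rangle^{2s}\leq 1$ and extract $\sigma$ derivatives from $u$:
\begin{align*}
\int_{|\eta|>M\tau/2}|\hat u(\eta)|^2\,d\eta\;\lesssim\;(M\tau)^{-2\sigma}\,\|u\|_{H^\sigma}^2.
\end{align*}
The hypothesis $\tau M^{1+s/\sigma}\geq 1$ is precisely what converts $(M\tau)^{-2\sigma}$ into $M^{2s}$. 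Note that $\sigma\geq|s|$ forces $1+s/\sigma\in[0,1]$, so the hypothesis in particular implies $M\tau\geq 1$, which legitimizes the tail estimate.

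For the lower bound (iii), I will fix $u\neq 0$ in $L^2$ and choose $R=R(u)>0$ so that $\int_{|\eta|\leq R}|\hat u(\eta)|^2\,d\eta\geq\tfrac12\|\hat u\|_{L^2}^2$. For $\tau M\geq C_u$ with $C_u$ depending on $R$ (say $C_u=4R$ suffices), we have $|\eta|/\tau\leq R/\tau\leq M/4$ on $\{|\eta|\leq R\}$, so $\langle M+\eta/\tau\rangle\sim M$ there and hence $\langle M+\eta/\tau\rangle^{2s}\gtrsim M^{2s}$ regardless of the sign of $s$. Restricting the master formula to this set gives the claim. The one place the argument is tight is the exponent balance in part (ii), where the condition $\sigma\geq|s|$ is what makes the modulation assumption on $\tau M$ non-trivial; everything else is a routine change of variables and pointwise weight comparison.
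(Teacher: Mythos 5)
Your proposal is correct and follows essentially the same route as the paper: both reduce the claim to the change-of-variables identity $\Vert v \Vert_{H^s}^2 = |A|^2\tau\int \langle M+\eta/\tau\rangle^{2s}|\hat u(\eta)|^2\,d\eta$ and then compare the weight pointwise on frequency regions determined by $\tau M$, with the hypothesis $1\le \tau M^{1+s/\sigma}$ entering exactly where you place it, in converting $(\tau M)^{-2\sigma}$ into $M^{2s}$ on the tail. The only differences are cosmetic: the paper splits into three regions (at $\tau M/2$ and $2\tau M$) for both signs of $s$, whereas you handle (i) by the single pointwise bound $\langle M+\eta/\tau\rangle\lesssim M\langle\eta\rangle$ and (ii) with one cut, and your part (iii) makes explicit the choice of $R$ and $C_u$ that the paper leaves as a limiting statement.
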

 \begin{proof}
 	Observe that
 	\begin{align*}
 	A^{-2}\tau^{-1}M^{-2s}\Vert v \Vert_{H^s}^2=&c\tau^{-1}M^{-2s}\int_{\mathbb{R}} \left(1+\abs{\xi}^2\right)^s \tau^2 \vert \widehat{u}\left(\tau\left(\xi-M\right)\right)^2\,d\xi\\
 	=&c \int_{\mathbb{R}} \left(\frac{\tau^2+\abs{M\tau+\eta}^2 }{\tau^2M^2} \right)^s \abs{\widehat{u}(\eta)}^2 \,d\eta\\
 	\lesssim& \int_{\abs{\eta}\leq\tau M/2} \vert \widehat{u}(\eta) \vert^2 +\int_{ \tau M/2  \leq \abs{\eta} \leq  2\tau M }M^{-2s} \vert \widehat{u}(\eta) \vert^2\\
 	&\hspace{20mm}+\int_{\abs{\eta}\geq 2\tau M }\frac{\abs{\eta}^{2s}}{\left(\tau M \right)^{2s}}\vert \widehat{u}(\eta) \vert^2\\
 	=&I+II+III.
 	\end{align*}
 	
 	Term $I$ is $\lesssim \Vert u \Vert_{L^2}^2$. If $s\geq 0 $, then $M^{-2s}\leq 1 $, so $II \lesssim \Vert u \Vert_{L^2}^2$ and $III\lesssim \Vert u \Vert_{H^s}^2$, because $\tau M \geq 1 $.
 	
 	If  $s<0$, then $III \lesssim \Vert u \Vert_{L^2}^2$, since $\abs{\eta}/\tau M \gtrsim 1 $. Moreover, $II \lesssim M^{-2s}\left(\tau M \right)^{-2\sigma}\Vert u \Vert_{H^\sigma}^2$, which is $\lesssim \Vert u \Vert_{H^\sigma}^2$ under the further hypothesis $1\leq \tau \cdot M^{1+\left(s/\sigma \right)}$.	
 	
 	To obtain $(III)$, it suffices to consider term $I$ : for any $u$, $\int_{\abs{\eta} \leq \tau M/2 }\vert \widehat{u}(\eta) \vert^2 $ approches c$\Vert u \Vert_{L^2}^2$ as $\tau M \to \infty.$
 \end{proof}
 Now we are ready to prove the error estimates.
 \begin{proof}
 	For $\frac{1}{2}<b<1$, from Lemma \ref{non} and $e_j(0)=0$, we have 
 	\begin{align*}
 	\Vert \eta(t) e_j \Vert_{X^{-\frac{1}{2},b}}\lesssim& \Vert \eta(t) E_j \Vert_{X^{-\frac{1}{2},b-1}}\\
 	=&\Vert \langle \tau+\xi^4 \rangle^{b-1} \langle \xi \rangle^{-\frac{1}{2}} \widetilde{\eta(t)E_j} \Vert_{L_{\tau,\xi}^2}\\
 	\leq& \Vert \langle \xi \rangle^{-\frac{1}{2}} \widetilde{\eta(t)E_j}  \Vert_{L_{\tau,\xi}^2}\\
 	=&\Vert \eta(t) \langle \xi \rangle^{-\frac{1}{2}}\mathcal{F}_xE_j(t,\xi) \Vert_{L_{t,\xi}^2}\\
 	\lesssim& \Vert \langle \xi \rangle^{-\frac{1}{2}}\mathcal{F}_xE_j(t,\xi) \Vert_{L_t^\infty L_{\xi}^2\left([0,1]\times\mathbb{R}\right)}.
 	\end{align*}
 	Hence, it suffices to show that 
 	\begin{align*}
 	\sup\limits_{0\leq t \leq 1 } \Vert E_j(t) \Vert_{H_x^{-\frac{1}{2}}}\lesssim \varepsilon N^{-2}.
 	\end{align*}
 	
 	The above error estimate then follows by Lemma $\ref{lem: ill-posedness modulation}$, the fact that $H^s$ is a Banach algebra for $s>\frac{1}{2}$, and $\eqref{eqn: ill-posedness H_x^5 bound}$. More precisely, we apply Lemma $\ref{lem: ill-posedness modulation}$ with $M=N,\tau=N$ and $A=N^{-4}$ or $N^{-2}$.
 	\begin{align*}
 	&\Vert E_1(t) \Vert_{H_x^{-\frac{1}{2}}}\lesssim N^{-4} N^{\frac{1}{2}}N^{-\frac{1}{2}}\Vert u(t) \Vert_{H_x^K},\\
 	&\Vert E_2(t) \Vert_{H_x^{-\frac{1}{2}}}\lesssim N^{-2} N^{\frac{1}{2}}N^{-\frac{1}{2}}\Vert u(t) \Vert_{H_x^K}
 	\end{align*}
 	for all $t\in \mathbb{R}$.
 \end{proof} 
 
 \subsection{Perturbation lemma}
 In this subsection,
 we prove the following perturbation lemma. 
 \begin{lemma}\label{lem : ill-posedness perturbation lem}
 	Let $u$ be a smooth solution to $\eqref{eqn:fourth order NLS}$ and $v$ be a Schwartz solution to the approximate fourth order NLS equation
 	\begin{align*}
 	i\partial_tv-\partial_x^4v-\abs{v}^2u=E
 	\end{align*}	
 	for some error function $E$. Let $e$ be the solution to the inhomogeneous problem
 	\begin{align*}
 	i\partial_t e -\partial_x^4e=E, \quad e(0)=0.
 	\end{align*}	
 	Suppose that 
 	\begin{align*}
 	\Vert u(0) \Vert_{H_x^{-\frac{1}{2}}}, \Vert v(0) \Vert_{H_x^{-\frac{1}{2}}}\lesssim \varepsilon, \quad \text{and} \quad \Vert \eta(t)e \Vert_{X^{-\frac{1}{2},b}}\lesssim \varepsilon.
 	\end{align*}
 	Then we have
 	\begin{align*}
 	\Vert \eta(t)(u-v)\Vert_{X^{-\frac{1}{2},b}}\lesssim \Vert u(0)-v(0) \Vert_{H^{-\frac{1}{2}}}+\Vert \eta(t) e \Vert_{X^{-\frac{1}{2},b}}.  
 	\end{align*}
 	In particular, we have 
 	\begin{align*}
 	\sup\limits_{0\leq t \leq 1 } \Vert u(t)-v(t) \Vert_{H^{-\frac{1}{2}}}\lesssim \Vert u(0)-v(0) \Vert_{H^{-\frac{1}{2}}}+\Vert \eta(t) e \Vert_{X^{-\frac{1}{2},b}}.
 	\end{align*}
 \end{lemma}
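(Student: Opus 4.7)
The plan is a standard perturbation argument in the Bourgain space $X^{-1/2,b}$: introduce the substitution $w := (u-v) + e$ to absorb the error $E$ into the left-hand side, and then close the estimate via the trilinear estimate \eqref{eqn:trilinear estimate}. Setting $z := u - v$ and subtracting the equations for $u$ and $v$, one sees that $z$ satisfies a fourth order linear Schr\"odinger equation with source $\pm(|u|^2 u - |v|^2 v) - E$ and initial datum $u(0) - v(0)$. Since $e$ solves the same linear equation with source $E$ and zero initial datum, Duhamel's principle identifies the $E$-contribution to $z$ with $-e$, so $w = z + e$ satisfies the analogous equation with only the cubic source and with the same initial datum $w(0) = u(0) - v(0)$. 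Applying the $X^{s,b}$-energy estimate \eqref{eqn:X^{s,b} energy estimate} to $\eta(t) w$ at $s = -1/2$ gives
\begin{align*}
\|\eta w\|_{X^{-1/2,b}} \lesssim \|u(0)-v(0)\|_{H^{-1/2}} + \bigl\|\eta(|u|^2 u - |v|^2 v)\bigr\|_{X^{-1/2,b-1}}.
\end{align*}

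The cubic difference decomposes algebraically as $|u|^2 u - |v|^2 v = u^2 \bar z + \bar v(u+v) z$, a finite sum of trilinear forms of the type $u_1 \bar u_2 u_3$ in which exactly one factor is $z$ or $\bar z$. Applying the trilinear estimate \eqref{eqn:trilinear estimate} to each summand, together with the bounds $\|\eta u\|_{X^{-1/2,b}}, \|\eta v\|_{X^{-1/2,b}} \lesssim \epsilon$ (which for the 4NLS solution $u$ follow from Theorem~\ref{thm: local well-posedness}, and for the approximate solution $v$ from a short bootstrap using the hypotheses $\|v(0)\|_{H^{-1/2}} \lesssim \epsilon$ and $\|\eta e\|_{X^{-1/2,b}} \lesssim \epsilon$ combined with the same trilinear estimate), yields
\begin{align*}
\bigl\|\eta(|u|^2 u - |v|^2 v)\bigr\|_{X^{-1/2,b-1}} \lesssim \epsilon^2 \|\eta z\|_{X^{-1/2,b}} \le \epsilon^2 \bigl(\|\eta w\|_{X^{-1/2,b}} + \|\eta e\|_{X^{-1/2,b}}\bigr).
\end{align*}
Inserting into the energy estimate and absorbing the $\epsilon^2 \|\eta w\|$-term on the right into the left-hand side (valid for $\epsilon$ small enough) gives $\|\eta w\|_{X^{-1/2,b}} \lesssim \|u(0)-v(0)\|_{H^{-1/2}} + \|\eta e\|_{X^{-1/2,b}}$, and the triangle inequality $\|\eta(u-v)\|_{X^{-1/2,b}} \le \|\eta w\|_{X^{-1/2,b}} + \|\eta e\|_{X^{-1/2,b}}$ finishes the $X^{-1/2,b}$ bound; the pointwise-in-time statement is immediate from the embedding $X^{-1/2,b} \hookrightarrow C_t H^{-1/2}$ for $b > 1/2$. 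I do not expect any substantive obstacle beyond routine bookkeeping of signs and of the time cut-off $\eta$: the argument is a standard difference estimate once the substitution $w = z + e$ eliminates the explicit error source.
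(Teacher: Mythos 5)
Your proposal is correct and follows essentially the same route as the paper's (sketched) proof: Duhamel formula with the time cut-off, the $X^{s,b}$ energy estimate \eqref{eqn:X^{s,b} energy estimate}, a bootstrap to get $\Vert \eta v\Vert_{X^{-1/2,b}}\lesssim \epsilon$, and then the trilinear estimate \eqref{eqn:trilinear estimate} plus an absorption argument applied to the difference. Your substitution $w=(u-v)+e$ and the algebraic splitting $|u|^2u-|v|^2v=u^2\bar z+\bar v(u+v)z$ are just a clean way of organizing the same computation, so there is nothing substantively different to flag.
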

 \begin{proof}
 	The proof is a standard perturbation argument. See \cite[Lemma 5.1]{CCT2003}. We give only a sketch. We write the Duhamel formula for $v$ with a time cut-off function $\eta(t)$
 	\begin{align*}
 	\eta(t)v(t)=\eta(t)e^{it\partial_x^4}u(0)-\eta(t)e(t)+\eta(t)\int_0^te^{i(t-t')\partial_x^4}\vert v \vert^2v(t')\,dt'.
 	\end{align*}
 	We use $\eqref{eqn:X^{s,b} energy estimate}$,$\eqref{eqn:trilinear estimate}$ and a continuity argument assuming that $\varepsilon$ is very small. Then, we obtain
 	\begin{align*}
 	\Vert \eta(t) \Vert_{X^{-\frac{1}{2},b }}\lesssim \varepsilon.
 	\end{align*}
 	We repeat the same argument on the difference $w=u-v$ to get the desired result.
 \end{proof}
 
 Let $U$ be the global Schwartz solution to the fourth order NLS equation $\eqref{eqn:fourth order NLS}$ with initial datum $U(0,\cdot)=U_{ap}(0,\cdot)$. By applying the above two Lemma $\ref{lem : ill-posedness error estimate}$ and $\ref{lem : ill-posedness perturbation lem}$, we obtain 
 \begin{align*}
 \sup\limits_{k\leq t \leq k+1} \Vert U(t)-U_{ap}(t)\Vert_{H^{-\frac{1}{2}}} \lesssim& \Vert U(k)-U_{ap}(k)\Vert_{H^{-\frac{1}{2}}}+\varepsilon N^{-2}\\
 \lesssim & \sup\limits_{k-1\leq t \leq k} \Vert U(t)-U_{ap}(t) \Vert_{H^{-\frac{1}{2}}}+\varepsilon N^{-2}.
 \end{align*}
 Therefore, by applying the induction on $k$ for $k\lesssim \log N$ and using $U(0)=U_{ap}(0)$, we conclude that for any $\eta>0$
 \begin{align}
 \sup\limits_{0\leq t \lesssim \text{log}N } \Vert U(t)-U_{ap}(t) \Vert_{H_x^{-\frac{1}{2}}}\lesssim \varepsilon N^{-2+\eta},
 \end{align}
 uniformly for all $N\gg 1 $. 
 
 \subsection{Proof of Theorem $\ref{thm: ill-posedness}$}
 In this subsection, we prove Theorem \ref{thm: ill-posedness}. We follow the argument in \cite{CCT2003}.
 Before we start proving the theorem, let us recall the following.  
 In \cite[(3.16),(3.19),(3.20)]{CCT2003}, Christ-Colliander-Tao constructed the global solution $u^{\langle aw \rangle}$ to \eqref{eqn: cubic NLS,focusing defocusing} for all $a\in [1/2,2]$, where $w(x)=\varepsilon \exp(-x^2)$ for some parameter $0<\varepsilon \ll 1$, such that 
 \begin{align}
 \sup\limits_{0\leq t <\infty}\Vert u^{\langle aw \rangle}(t) \Vert_{H_x^K}&\lesssim \varepsilon,\label{eqn: ill-posedness H_x^5 bound} \\
 \Vert u^{\langle aw \rangle}(0)-u^{\langle a'w \rangle}(0) \Vert_{H_x^K}&\lesssim \varepsilon \vert a-a' \vert,\label{eqn: ill-posedness H_x^5 difference} \\
 \limsup\limits_{t \to +\infty} \Vert u^{\langle aw \rangle}(t)-u^{\langle a'w \rangle}(t) \Vert_{L_x^2}&\gtrsim 1. \label{eqn: ill-posedness limsup L^2}
 \end{align}
 for sufficiently large integer $K\geq 5$.
 The details of solutions $u^{\langle aw \rangle}$ are presented in \cite[Section 3]{CCT2003}.
 The Galilean invariances and scale symmetry can be used to transform such solutions $u^{\langle aw \rangle}$ into solutions $u_1,u_2$ in Theorem \ref{thm: ill-posedness defocsuing NLS}. They also used these solutions $u^{\langle aw \rangle}$ to establish approximate solution of the mKdV equation to show mild ill-posedness of the mKdV equation. In the proof of Theorem \ref{thm: ill-posedness}, we also use these solutions $u^{\langle aw \rangle}$.

 \begin{proof}[Proof of Theorem $\ref{thm: ill-posedness}$]

 	Let $0<\delta \ll \varepsilon \ll 1$ and $T>0$ be given. Note that we have two global solutions $u_1,u_2$ of defocusing NLS $\eqref{eqn: defocusing cubic NLS}$ satisfying $\eqref{eqn: ill-posedness H_x^5 bound}$, $\eqref{eqn: ill-posedness H_x^5 difference}$ and $\eqref{eqn: ill-posedness limsup L^2}$.
 	
 	Define $U_{ap,1}$ and $U_{ap,2}$ by
 	\begin{align*}
 	&U_{ap,1}(t,x):=e^{iN^4t}e^{iNx}u_{1}(s,y),\\
 	&U_{ap,2}(t,x):=e^{iN^4t}e^{iNx}u_{2}(s,y)
 	\end{align*}
 	and let $U_1,U_2$ be global Schwartz solutions with initial data $U_{ap,1}(0,\cdot), U_{ap,2}(0,\cdot)$, respectively. Let $\lambda \gg 1 $ be a large parameter to be chosen later. Let $U_{j}^{\lambda}$ denote the function
 	\begin{align*}
 	U_{j}^{\lambda}(t,x):=\lambda^2U_{j}\left(\lambda^4t,\lambda x \right).
 	\end{align*}
 	Since $U_j$ is a global solution to fourth order NLS, so is $U_j^\lambda$. Similarly, we define 
 	\begin{align*}
 	U_{ap,j}^{\lambda}(t,x):=\lambda^2 U_{ap,j}(\lambda^4t,\lambda x )
 	\end{align*}
 	for $j=1,2$. Note that 
 	\begin{align*}
 	U_{j}^\lambda(0,x)=U_{ap,j}^{\lambda}(0,x)=\lambda^2 U_{ap,j}(0,\lambda x )=\lambda^2 e^{iN\lambda x}u_j(0,\frac{\lambda x}{ \sqrt{6}N} ).
 	\end{align*}
 	
 	\noindent
 	From Lemma \ref{lem: ill-posedness modulation}, we have
 	\begin{align*}
 	\Vert U_j^{\lambda}(0) \Vert_{H^s} \lesssim \lambda^{s+\frac{3}{2}}N^{s+\frac{1}{2}}\Vert u_j(0)\Vert_{H_x^K}
 	\end{align*}
 	provided that its hypothesis $1\leq \tau M^{1+(s/K)}$ is satisfied with $A=\lambda^2, M=N\lambda$, and $\tau=N/\lambda$. We define $\lambda$ by 
 	\begin{align}
 	\label{lambda}
 	\lambda:= N^{-\frac{s+\frac{1}{2}}{s+\frac{3}{2}}}.
 	\end{align}
 	The condition $1\leq \tau M^{1+(s/K)}$ then becomes $1\leq N^{\frac{2s+3}{s+\frac{3}{2}}}\cdot \left(N^{\frac{1}{s+3/2 }} \right)^{s/K}$. Hence, for any $s>-3/2$ this condition is satisfied with sufficiently large $N,K$.

 	Then, from $\eqref{eqn: ill-posedness H_x^5 bound}$, we have $\Vert U_j^\lambda(0) \Vert_{H_x^s}\lesssim \varepsilon $ for $j=1,2$. Similarly by using $\eqref{eqn: ill-posedness H_x^5 difference}$ instead of $\eqref{eqn: ill-posedness H_x^5 bound}$, we have $\Vert U_1^{\lambda}(0)-U_2^{\lambda}(0) \Vert _{H_x^s}\lesssim \delta $.
 	
 	Now we show $\sup\limits_{0\leq t \leq T } \Vert U_1^\lambda(t)-U_2^\lambda(t) \Vert_{H_x^s}\gtrsim \varepsilon $. Recall that we have
 	\begin{align}\label{eqn: ill-posedness U(t)-U_{ap}(t)}
 	\sup\limits_{0\leq t \lesssim \text{log}N } \Vert U_j(t)-U_{ap,j}(t) \Vert_{H_x^{-\frac{1}{2}}}\lesssim \varepsilon N^{-2+\eta}
 	\end{align}
 	A scaling calcuation shows
 	\begin{align*}
 	\Vert U_j^\lambda(t)-U_{ap,j}^\lambda(t) \Vert_{H_x^{s}}\lesssim& \lambda^{\text{max}(0,s)+\frac{3}{2}} \Vert  U_j(\lambda^4 t)-U_{ap,j}(\lambda^4 t)  \Vert_{H_x^s}\\
 	\lesssim& \lambda^{\text{max}(0,s)+\frac{3}{2}} \Vert  U_j(\lambda^4 t)-U_{ap,j}(\lambda^4 t)  \Vert_{H_x^{-\frac{1}{2}}}\\
 	\lesssim& \lambda^{\text{max}(0,s)+\frac{3}{2}}\varepsilon N^{-2+\eta}
 	\end{align*}
 	for $0< \lambda^4 t\lesssim \text{log}N$. By using $(\ref{lambda})$ and $s>-\frac{15}{14}$, we have
 	\begin{align}\label{eqn: ill-posedness triangle inequality 1}
 	\Vert U_j^\lambda(t)-U_{ap,j}^\lambda(t) \Vert_{H_x^{s}}\ll \varepsilon
 	\end{align}
 	for sufficiently small $\eta>0$ and large $N\gg 1$. From $\eqref{eqn: ill-posedness limsup L^2}$, we can find $t_0$ such that 
 	\begin{align}\label{eqn: ill-posedness limsup L^2 lower bound u_1, u_2}
 	\Vert u_1(t_0)-u_2(t_0)  \Vert_{L_x^2}\gtrsim \varepsilon.
 	\end{align}
 	Fix this $t_0.$ We choose $N$ so large that $t_0 \ll \text{log}N$. Using Lemma \ref{lem: ill-posedness modulation} and $\eqref{eqn: ill-posedness limsup L^2 lower bound u_1, u_2}$ as before, we have
 	\begin{align}\label{eqn : ill-posedness triangle inequality 2}
 	\Vert U_{ap,1}^{\lambda}(t_0/\lambda^4)-U_{ap,2}^\lambda(t_0/\lambda^4) \Vert_{H_x^s} \gtrsim \lambda^{s+\frac{3}{2}}N^{s+\frac{1}{2}}\Vert u_1(t_0)-u_2(t_0)  \Vert_{L_x^2}\gtrsim \varepsilon.
 	\end{align}
 	
 	\noindent
 	Hence, from \eqref{eqn: ill-posedness triangle inequality 1}, \eqref{eqn : ill-posedness triangle inequality 2}, and triangle inequality, we have 
 	\begin{align*}
 	\Vert U_{1}^{\lambda}(t_0/\lambda^4)-U_{2}^\lambda(t_0/\lambda^4) \Vert_{H_x^s}\gtrsim \varepsilon.
 	\end{align*}
 	By choosing $N$(and hence $\lambda$) large enough that $t_0/\lambda^4<T$, we obtain the desired result.
 	
 \end{proof}

\end{document}